\documentclass[10pt]{article}

\usepackage[utf8]{inputenc}

\textwidth  6.8in \textheight 8.5in
\topmargin0cm \oddsidemargin0cm \evensidemargin0cm

\usepackage{hyperref}
\usepackage{verbatim} 

\usepackage{graphicx}
\usepackage{caption}
\usepackage{subcaption}
\usepackage{amssymb}
\usepackage{ textcomp } 
\usepackage{color}
\usepackage{enumerate}
\usepackage{mathrsfs}
\usepackage{multicol}

\usepackage[ruled,vlined, onelanguage]{algorithm2e}

\usepackage{amsopn}

\usepackage{amsmath}
\usepackage{amsthm}
\usepackage{amssymb}

\usepackage{multicol}

\usepackage[ruled,vlined]{algorithm2e}


\newcommand{\C}{\mathbb{C}}

\newcommand{\K}{\mathbb{K}}
\newcommand{\N}{\mathbb{N}}

\newcommand{\R}{\mathbb{R}}

\newcommand{\calA}{\mathcal{A}}

\newcommand{\calC}{\mathcal{C}}
\newcommand{\calD}{\mathcal{D}}
\newcommand{\calE}{\mathcal{E}}
\newcommand{\calF}{\mathcal{F}}
\newcommand{\calH}{\mathcal{H}}
\newcommand{\calI}{\mathcal{I}}

\newcommand{\calL}{\mathcal{L}}
\newcommand{\calM}{\mathcal{M}}
\newcommand{\calP}{\mathcal{P}}

\newcommand{\calS}{\mathcal{S}}

\newcommand{\vphi}{\varphi}

\newcommand{\abs}[1]{\vert #1 \vert}
\newcommand{\norm}[1]{\Vert #1 \Vert}
\newcommand{\abss}[1]{\left\vert #1 \right\vert}
\newcommand{\normm}[1]{\left\Vert #1 \right\Vert}
\newcommand{\set}[1]{\left\lbrace #1\right\rbrace}
\newcommand{\sse}{\subseteq}
\newcommand{\sprod}[1]{\left\langle #1 \right\rangle}

\newcommand{\sph}{\mathbb{S}}


\newcommand{\prb}[1]{\mathbb{P}\left( #1 \right)}
\newcommand{\erw}[1]{\mathbb{E}\left( #1 \right)}

\usepackage{dsfont}
\newcommand{\ind}{\mathds{1}}

\newcommand{\geqsim}{\gtrsim}
\newcommand{\leqsim}{\lesssim}

\newcommand{\hatsigma}{\widehat{\sigma}}
\newcommand{\hatphi}{\widehat{\phi}}


\DeclareMathOperator{\clonv}{\overline{conv}}
\DeclareMathOperator{\cone}{cone}

\DeclareMathOperator{\qri}{qri}

\DeclareMathOperator{\dist}{dist}
\DeclareMathOperator{\spn}{span}

\DeclareMathOperator{\supp}{supp}
\DeclareMathOperator{\id}{id}
\DeclareMathOperator{\argmin}{argmin}

\DeclareMathOperator{\ran}{ran}

\DeclareMathOperator{\rank}{rank}

\DeclareMathOperator{\tr}{tr}

\DeclareMathOperator{\dom}{dom}

\DeclareMathOperator{\ri}{ri}

\DeclareMathOperator{\re}{Re}
\DeclareMathOperator{\im}{Im}

\newcommand{\st}{\text{ subject to }}

\newtheorem{lem}{Lemma}
\newtheorem{prop}[lem]{Proposition}

\newtheorem{theo}[lem]{Theorem}
\newtheorem{cor}[lem]{Corollary}

\newtheorem{defi}[lem]{Definition}
\newtheorem{rem}[lem]{Remark}

\theoremstyle{definition}
\newtheorem{example}[lem]{Example}

\numberwithin{lem}{section}

\usepackage{etoolbox}
\let\bbordermatrix\bordermatrix
\patchcmd{\bbordermatrix}{8.75}{4.75}{}{}
\patchcmd{\bbordermatrix}{\left(}{\left[}{}{}
\patchcmd{\bbordermatrix}{\right)}{\right]}{}{}

\title{Soft Recovery With General Atomic Norms}

\usepackage[affil-it]{authblk}

\author{Axel Flinth\thanks{E-mail: {\tt flinth@math.tu-berlin.de}}}
\affil{Institut für Mathematik \\ Technische Universität Berlin}

\begin{document}

\maketitle

\begin{abstract}

	This paper describes a dual certificate condition on a linear measurement operator $A$ (defined on a Hilbert space $\calH$ and having finite-dimensional range) which guarantees that an \emph{atomic norm minimization}, in a certain sense, will be able to \emph{approximately} recover a structured signal $v_0 \in \calH$ from measurements $Av_0$. Put very streamlined, the condition implies that peaks in a sparse decomposition of $v_0$ are close the the support of the atomic decomposition of the solution $v^*$. The condition applies in a relatively general context - in particular, the space $\calH$ can be infinite-dimensional. The abstract framework isapplied to several concrete examples, one example being super-resolution. In this process, several novel results which are interesting on its own are obtained.
	
	\underline{MSC(2010):} 52A41, 90C25
\end{abstract}

\section{Introduction}

Put very generally, the field of \emph{compressed sensing} \cite{candes2006robust} deals with (linear) inverse problems $Ax=b$ which are a priori ill-posed, but given that there exists a solution with a certain structure, the inverse problem becomes solvable. The most well-known of these structures is probably \emph{sparsity} of the ground truth signals, but many other structures are possible -- low rank for matrices being a prominent non-trivial example.  

Many structured of this kind can be modeled with the help of the so called \emph{synthesis formulation}: Given a collection of vectors $\Phi=(\vphi_x)_{x \in I}$ -- or \emph{dictionary}\footnote{In this paper, we will adopt an understanding of what a dictionary is which is a bit more restrictive than just being a collection of vectors. For the introduction, these subtleties are however not important.} -- we think of a signal $v_0$ to be structured if it possesses a \emph{sparse atomic decomposition} in the dictionary:
\begin{align*}
	v_0= \sum_{x \in I_0} c_x^0 \vphi_{x},
\end{align*}
with $\abs{I_0} \ll \abs{I}$ and scalars $c_x^0$. A general strategy for recovering such signals from linear measurements $b=Av_0$ was described in \cite{chandrasekaran2012convex}, namely \emph{atomic norm minimization}:
\begin{align*}
	 \min_v \norm{v}_\calA \st Av=b. \tag{$\calP_\calA$}
\end{align*}
The atomic norm is thereby defined as $\inf \set{ t>0 \ \vert \ v \in t \clonv \Phi}$. In the case of a finite dictionary, it is not hard to convince oneself that by taking $v = \sum_{i \in I} c_i \vphi_i$, one alternatively can solve the program
\begin{align*}
	\min_c \norm{c}_1 \st A\left(\sum_{i \in I} c_i \vphi_i \right) = b. \tag{$\calP_1$}
\end{align*}

 A generic compressed sensing result about the program $(\calP_\calA)$ would read something like this:
\begin{quote}
	Assume that the measurement process $A$ fulfills a certain set of properties and the ground truth signal $v_0$ has a sparse atomic decomposition in $(\vphi_x)_{x\in I}$. Then the solution of $(\calP_\calA)$ is exactly equal to $v_0$.
\end{quote} 
In the following, we will call such guarantees \emph{exact recovery guarantees}. To the best knowledge of the author, there is no theorem that holds in this generality in the literature. There is however a vast body of literature concerning a zoo of important special examples. Some examples of such dictionaries and corresponding atomic norms include the standard dictionary $(e_i)_{i=1}^n$ and the $\ell^1$-norm \cite{CandesRombergTao2006}, the dictionary of rank-one operators $(uv^*)_{u \in \sph^{k-1}, v \in \sph^{n-1}}$ and the nuclear norm \cite{gross2011golfing}, and the dictionary of matrices $(ue_i^*)_{u \in \sph^{k-1}, i=1, \dots n}$ and the $\ell_{12}$-norm \cite{Eldar2009BlockSparse}. More examples can be found in \cite{chandrasekaran2012convex}. \newline

For \emph{coherent} dictionaries (i.e. dictionaries for which $\sup_{x \neq x'} \tfrac{\abs{\sprod{\vphi_x, \vphi_{x'}}}}{\norm{\vphi_x}\norm{\vphi_{x'}}}$ is close to one), the recovery of a structured signal can be very unstable. To explain why this is the case,  notice that the high coherence will cause the map 
\begin{align*}
	D: c \to \sum_{i \in I} c_i \vphi_i
\end{align*}
to be highly ill-conditioned, i.e., there will exist relatively sparse $c$ with large $\ell_1$-norm with $\norm{Dc}\approx 0$. Hence, it could happen that for $v, v'$ relatively close to each other, the corresponding solutions $c, c'$ to $(\calP_1)$ are radically different.  The ill-conditioning however goes both ways: Although $c^0$ and $c^*$ are far away from each other, $Dc^0$ can still be close to $Dc^*$. In applications, the recovery of the \emph{signal} $Dc^0$ is often what one is genuinely interested in.

Sometimes however, the recovery of the \emph{representation} $c^0$ itself is important. To see this, let us imagine that the signal $v_0$ is a recording of a tone played by a musical instrument, and we need to recover it from a compressed (or in some other way altered) version $Av_0$. Since only one tone is played, $v_0$ should be sparsely representable in the dictionary of complex exponentials $\vphi_\lambda = \exp(i\lambda \cdot t)$, say $v_0 = \sum_{i=1}^s c_i^0 \vphi_{\lambda_i}$ (we cannot expect $s=1$ due to overtones). For a person just listening to the tone, the recovery of $v_0$ is the important thing. For a person wanting to play the same tone on a different instrument, it is however also important to recover $c^0$. We certainly cannot hope to recover $c^0$ exactly if we cannot recover $v_0$ exactly, but an approximate recovery will in fact suffice: If an automated recovery algorithm says that the tone that was played had a frequency of $439.8$ Hz, an experienced musician will be able to identify the note as an $A$ in the fourth octave (which has a standard frequency of $440$ Hz). In order to be able to trust the recovery algorithm in the first place, we need a method of proving that it approximately recovers the correct frequency.\newline

Up until now, there has been little research providing rigororous backing up statements about the type of recovery described above. Note that a rigorous claim about this type of approximate recovery is something radically different compared to recovery from \emph{noisy measurements} and \emph{instance optimality} \cite{wojtaszczyk2010stability} (i.e. recovery of approximately structured signals). What we are interested in is instead a result of the following flavour:
\begin{quote}
Assume that the measurement process $A$ fulfills a certain set of properties and the ground truth signal $v_0$ has a sparse atomic decomposition in $(\vphi_x)_{x\in I}$. Then the solution of $(\calP_\calA)$ has an atomic decomposition which is approximately equal to the one of $v_0$.
\end{quote}
 We call this type of guarantee \emph{soft recovery}. This concept  was originally introduced in \cite{Flinth2016Soft} by the author in the special context of recovery of \emph{column sparse matrices}. In this paper, we will analyze this concept in a significantly more general setting:
\begin{itemize}
	\item First and foremost, as has already been advertized, we will prove our guarantee in an atomic norm framework. This will reveal the underlying structures of the argument in \cite{Flinth2016Soft}, which implicitly uses the fact that the $\ell_{12}$-norm is an atomic norm (more information about this can be found in Section \ref{sec:L12}). This generality will allow us to apply the results in very general contexts. In particular, the dictionaries will be allowed to be uncountable.
	\item The possibility of an uncountable dictionary behoves us to study programs on the space of Radon measures of finite total variation. This is mathematically much more subtle than the programs appearing in \cite{Flinth2016Soft}.
	\item We will allow the signals to be elements of infinite dimensional spaces. This will enable us to provide an analysis of \emph{super resolution} in Section \ref{sec:SuperResolution}.
	\item Finally, we will signal in complex, and not only real, ambient spaces. This is not as conceptually different as the above generalizations, but very handy when applying the results.
\end{itemize}

\subsection{Main Results}
Let us give a brief, mathematical description of the main result \ref{th:soft} of this paper. For simplicity, we will stay in the case of a finite  normalized dictionary  $(\vphi_i)_{i=1}^d$ -- in the case of a infinite dictionary, in particular when the parameter $x$ is continuous, we have to be a bit more careful.

Let the signal $v_0 \in \calH$, where $\calH$ is a Hilbert space over $\K$, have a sparse atomic decomposition $\sum_{i =1}^d c_i \vphi_i$, where $c \in \K^d$ is sparse.
 Let further $i_0 \in \supp c$, $\sigma \geq 1$ and $t>0$ be arbitrary. The main result of this paper states that  provided $A$ satisfies a dual certificate-type condition (depending on $\sigma$, $t$ and also $i_0$), any minimizer $v_*$ of $(\calP_\calA)$ has the following property: There exist scalars $c_i^* \in \K$ with $v= \sum_{i =1 }^d c_i^* \vphi_i$, $\norm{c^*}_1 = \norm{v_*}_\calA$  and some $i_* \in \set{1, \dots d}$ with $\abs{ \sprod{\vphi_i, \vphi_{i*}}} \geq \frac{t}{\sigma}$. Since the dictionary elements are normalized, this directly corresponds to $\vphi_{i_0}$ and $\vphi_{i_*}$ being close. Put differently, \emph{if the atomic decomposition of $x_0$ has a 'peak' at $\vphi_{i_0}$, the condition will guarantee that all atomic decompositions of $x_*$ have a support which contains an an element $\vphi_{i_*}$ close to $\vphi_{i_0}$}. See also Figure \ref{fig:softRec}.  \newline
 
 \begin{figure}
 \centering
 \includegraphics[scale=.5]{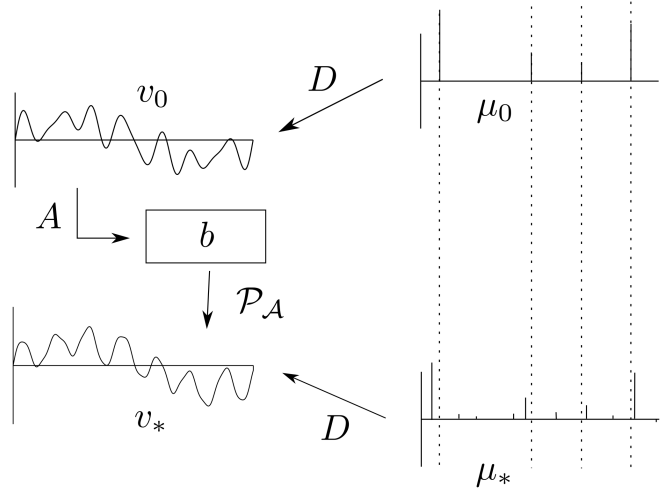}
 \caption{\label{fig:softRec} A graphical depiction of the meaning of soft recovery. The ground truth signal $v_0$ has a sparse atomic decomposition $\mu_0$. It is measured with the operator $A$ and is then tried to be recovered by solving the nuclear norm minimization program $(\calP_\calA)$. The recovered signal $v_* \neq v_0$ has an atomic decomposition $\mu_*$ which is not exactly equal to $\mu_0$, but it still has peaks at roughly the same places as $\mu_0$.}
 \end{figure}
 
 The generality of the main result \ref{th:soft} makes it applicable in many different contexts. In this paper, we will consider four such examples: 
 \begin{itemize}
 \item \emph{Nuclear Norm Minimization}, 
 \item \emph{$\ell_{12}$-minimization}, 
 \item \emph{Component Separation} and 
 \item \emph{Superresolution}.
 \end{itemize}
  In the two latter cases, the new theory will actually allow us to prove novel results that are interesting on their own. For component separation, which we will investigate in a finite dimensional setting, we will provide a guarantee for an individual index to be recovered, which for indices corresponding to large components can be much stronger than guarantees purely relying on sparsity assumptions. For superresolution, we will be able to prove soft recovery guarantees in very general settings. To the best knowledge of the author, most concrete, rigorous recovery guarantees for infinite dimensional superresolution have been given only for very particular measurement processes. 
  
  The successful application of the theory in the above four examples -- each relatively different from the others -- conceptually shows that although the main result may seem relatively abstract and complicated, it is actually is graspable enough to be applicable in a wide range of contexts, producing novel results in in the process. Without going to much into detail, the intuitive reason why this is, is that the soft recovery condition asks for the dual certificate \emph{approximate} a certain 'ideal certificate', where as exact recovery conditions typically asks the dual certificate to \emph{interpolate} certain values of that ideal certificate (the meaning of this will become particularly clear when we discuss super resolution in Section \ref{sec:SuperResolution}). Since approximating, from a functional analytic perspective, often is easier than interpolating, this also suggests that the soft recovery condition in the future could be applied in other contexts than conditions based exact dual certificate conditions can be.
  
 \subsection{Related work} 
In the paper \cite{DuvalPeyre2015}, the authors prove a recovery statement of ''soft type''  in the context of recovery of trains of $\delta$-peaks $\sum_{i=1}^s c_i \delta_{x_i}$ on $[0,2\pi]$. They consider the case of noisy measurements of convolution type $\Phi \mu= \int_{0}^{2\pi} \phi(x-t)d\mu(t) + n$, where $\phi \in \calC^2(0,2\pi)$ and $n$ is small. Theorem 2 of the referenced paper states that \emph{under an exact recovery condition}, the position of the peaks of the solutions of a regularized version of $(\calP_\calA)$ will converge to the the positions of the peak of the ground truth. A similar statement is provided in \cite{boyer2016adapting} about the so-called Beurling Lasso in the special case of the measurements being of Fourier type.  Although the (strong) statements in the mentioned papers bear very close resemblances to our main result, and the mathematics presented in them certainly is very elegant, they have one drawback, namely that the exact recovery condition still has to hold in order for them to be applied. Furthermore, it is not clear if it generalizes to more general measurement  operators. Our condition will in some cases be weaker (see in particular Section \ref{sec:Nuc}) than and/or be easier to check (see in particular Section \ref{sec:SuperResProof}) than the exact recovery condition. In particular, we expect them to hold for more general measurement operators.

 The concept of atomic norms was introduced in \cite{chandrasekaran2012convex}.  Much of the work will rely on techniques developed in e.g. \cite{candes2014towards,DuvalPeyre2015} for analyzing the use of $TV$-norm minimization for recovering spike trains. 
 
 The concrete examples which we consider in Section \ref{sec:Appl} have all been thoroughly investigated before. We will refer to related literature in respective sections.

	\subsection{Organization of the Paper} In Section \ref{sec:Atomic}, we will define the concept of atomic norm for general dictionaries in general Hilbert spaces. We will take a route which makes the development of our main result in Section \ref{sec:main} as comfortable as possible. In Section \ref{sec:Appl}, we will investigate which impact our main result has in four different compressed sensing-flavored settings. Finally, technical proofs and technical discussions not crucially important to the main body of the text can be found in Section \ref{sec:proofs}.

\section{Atomic Norms and Compressed Sensing in Infinite Dimensions} \label{sec:Atomic}

The aim of classical compressed sensing is to recover sparse vectors $x_0 \in \K^n$ from linear measurements $Ax_0$ with $A \in \K^{n,m}$, where $\K$ denotes one of the fields $\R$ or $\C$. A sparse vector could thereby be viewed as a vector having a \emph{sparse representation} in the standard basis $(e_i)_{i=1}^n$.  In the article \cite{chandrasekaran2012convex}, this perspective was widened: The authors defined so called \emph{atomic norms} for general, finite-dimensional dictionaries $(d_i)_{i\in I}$ and showed that they can be used to recover signals $x_0$ having sparse representation in said dictionary, i.e. for signals of the type
\begin{align*}
	x_0 = \sum_{i \in I_0} c_i d_i,
\end{align*}
where $I_0$ is a subset of $I$ with $\abs{I_0}$ small.

In this section, we aim to present, in an as condensed as possible manner, a definition of an atomic norm with respect to a 
dictionary $(\vphi_x)_{x \in I}$ in a Hilbert space $\calH$. The definition given here is different from the one given in \cite{chandrasekaran2012convex}, and will upon first inspection perhaps seem more complicated. We will however need this description when proving our main theorem.

\subsection{Atomic Norms}

Let us begin by defining what we will understand as the atomic norm with respect to a dictionary $(\vphi_x)_{x\in I}$ (we will in particular define what a \emph{dictionary} is).

 We will assume throughout that $I$ is a locally compact separable metric space. $\calC_0(I)$ will denote the space of continuous functions $f: I \to \K$  \emph{vanishing at infinity}, meaning that for every $\epsilon >0$, there exists a compact set $K$ with $\abs{f(x)}< \epsilon$ for $x \notin K$. This space becomes a Banach space when equipped with the norm $\norm{f}_\infty= \sup_{x \in I} \abs{f(x)}$. It is well known that the dual of $\calC_0(I)$ is given by the set of complex Radon measures with bounded total variation $\calM(I)$ on $I$. The dual pairing is given by
\begin{align*}
	\sprod{\mu, f}_{\calM(I), \calC_0(I)} = \int_I f d\mu.
\end{align*}
We will consequently leave out the subscripts of the pairing and use $\sprod{\cdot  ,  \cdot}$ interchangeably as notation for this dual pairing and the scalar product on $\calH$  - it will be clear from the context what is meant.

The total variation norm $\norm{\mu}_{TV}$ can either be defined as the norm of $\mu$ as a functional on $\calC_0(I)$, or $\abs{\mu}(I)$, where $\abs{\mu}$ is the \emph{total variation} of $\mu$:
\begin{align*}
	\abs{\mu}(A) = \sup \set{ \sum_{n=1}^\infty \abs{\mu(A_n)} \ \vert \ A= \dot{\bigcup_{n \in \N}} A_n},
\end{align*}
where the dot on top of the $\cup$ indicates that the union is disjoint. $\calM(I)$ equipped with this norm is a Banach space. We will however almost exclusively view it as a locally convex vector space, equipped with the weak-$*$ topology. The dual space of $\calM(I)$ viewed this way is isometrically isomorph to $\calC_0(I)$. The use of measures instead of sequences will become clear in the sequel: Basically put, the topology of $\calM(I)$ has nice properties. The definition was inspired by, e.g., \cite{DuvalPeyre2015}. \newline

We are now ready to define the meaning of a dictionary.

\begin{defi}\label{defi:dict} Let $I$ be a (locally compact and separable) metric space and $\calH$ a Hilbert space. We consider a collection $(\vphi_x)_{x \in I}$ of vectors in $\calH$ to be a \emph{dictionary} if the \emph{test map} 
$$T: \calH \to \calC_0(I), v \mapsto \sprod{v, \vphi_x}$$
 is well-defined and bounded.
\end{defi}

\begin{example} \label{ex:E} \begin{enumerate}
	 \item Let $I= \N$. By equipping $\N$ with the discrete metric, it becomes a locally compact, separable metric space. It is not hard to convince oneself that $\calC_0(\N)$ is the space of sequences $(c_i)$ converging to zero,, $\calM(I)$ is isomorphic $\ell^1(\N)$ (still equipped with the weak-$*$-topology!), and that the set of dictionaries coincides with the set of sequences of vectors $(\phi_n) \sse \calH$ with $\sup_{n\in \N} \norm{\phi_n} <\infty$. 
	 
	Even the even more rudimentary case that \emph{$I= \set{1, \dots, d}$ is finite}, $\calC_0(I) \sim (\K^d, \norm{ \cdot  }_\infty$ and $\calM(I) \sim (\K^d, \norm{ \cdot  }_1)$.\newline
	
	\item  Let $U$ and $V$ be separable Hilbert spaces and $\calH$ be the space of Hilbert-Schmidt operators from $U$ to $V$ i.e.
	\begin{align*}
		\set{ A \in \calL(U,V) \ \vert \ \norm{A}_{\calH}^2= \tr(A^*A) <\infty}.
	\end{align*}	
	Let $I$ be the set of normalized rank-one operators from $U$ to $V$ with norm less than $1$:
	\begin{align*}
		I = \set{v \otimes u \ \vert \ u \in U, v \in V, \norm{u}_U,\norm{v}_V \leq 1}.
	\end{align*}
	$I$ can be equipped with the metric induced by $\norm{\cdot}_\calH$, but then, $I$ is not locally compact. We can however make it so by equipping it with the weak(-$*$) topology induced by $\norm{\cdot}_\calH$. This still makes it possible to view it as a metric space, since the weak-$*$-topology on the unit ball of a dual $X^*$ of a separable Banach space $X$ is metrizable. $I$ is separable due to the separability of $U$ and $V$, and even compact due to the Banach-Alaoglu theorem.
	
	Now consider the collection of vectors defined by $\vphi_{v \otimes u}=v \otimes u$. We claim that this is a dictionary. To prove this, we only need to note that for all $A\in \calH$, $v \otimes u \to \sprod{A, v\otimes u}_\calH$ defines a continuous map vanishing at infinity (the compactness of $I$ makes the 'vanishing at infinity' part trivial), and
	\begin{align*}
		\sup_{ \norm{v \otimes u}_\calH \leq 1} \sprod{A, v\otimes u} \leq \norm{A}.
	\end{align*}
	The test map is hence bounded. \newline
	
	\item  Consider $I=\R$ with the standard metric (that is a separable, locally compact space). For a function $\phi \in \calS(\R)$\footnote{$\calS(\R)$ denotes the space of Schwartz functions on $\R$. Its topological dual is given by the space of \emph{tempered distributions} $\calS'(\R)$.} consider the Hilbert space $\calE$ defined as
	\begin{align*}
		\calE = \overline{\set{ v\in \calS'(\R):  \norm{v *\phi}_2^2 < \infty}}
	\end{align*}
	with norm $\norm{\cdot}_\calE$ induced by the scalar product
	\begin{align*}
		\sprod{v, w}_\calE = \sprod{v * \phi, w * \phi}_{L^2(\R)}.
	\end{align*}
	The closure in the definition of $\calE$ is of course with respect to the norm $\norm{\cdot}_\calE$. We claim that	$(\vphi_x)_{x \in \R} = (\delta_x)_{x \in \R}$ is a dictionary. To see this, let $v \in \calE$ be arbitrary. Then we have for all $x \in \R$
	\begin{align*}
		\sprod{v, \delta_x}_\calE = \sprod{v * \phi , \delta_x *\phi}_{L^2(\R)} = \sprod{\widehat{v} \widehat{\phi}, \widehat{\delta}_x \cdot \widehat{\phi}}_{L^2(\R)} = \int_{\R} \widehat{v}(t)\abs{\widehat{\phi}(t)}^2 \exp(-itx) dt= \widehat{g}(x),
	\end{align*}
	where we defined $a(t)=\widehat{v}(t)\abs{\widehat{\phi}(t)}^2 $. The function $a$ is as a product of the two $L^2$-functions $\widehat{v}\widehat{\phi}$ and $\overline{\widehat{\phi}}$ in $L^1(\R)$. The theorem of Lebesgue implies that $\widehat{a}$ is in $\calC_0(\R)$, i.e. that the test map is mapping into $\calC_0(\R)$.  The boundedness of the test map is given by the inequality $\abs{\sprod{v, \delta_x}}_\calE \leq \norm{v}_\calE \norm{\delta_x}_\calE$. 
	\end{enumerate}
\end{example}

For a dictionary, we can define a \emph{synthesis operator} as follows: For $v \in \calH$ and $\mu \in \calM(I)$ arbitrary, the integral 
\begin{align*}
	\int_I \sprod{v,\vphi_x} d\mu(x)
\end{align*}
is well-defined. The  map $v \to \int_I \sprod{v,\phi_x} d\mu(x)$ is furthermore bounded (its norm is smaller than the product of the norm of the test map of $(\vphi_x)$ and $\norm{\mu}_{TV}$), so it defines a functional on $\calH$, and therefore by Riesz duality also a $w$ in $\calH$. Hence, we may  define a \emph{synthesis operator} by
\begin{align*}
	D: \calM(I) \to \calH, \mu \to \int_I \vphi_x d\mu(x).
\end{align*}
The dual operator $D^* : \calH \to \calC_0(I)$ is then given by the test map of the dictionary.
With the help of the synthesis operator, we define the atomic norm as follows.

\begin{defi}
	Let $(\vphi_x)_{x \in I} \sse \calH$ be a dictionary with synthesis operator $D$. For $v \in \calH$, we define the \emph{atomic norm} $\norm{v}_\calA$ through
	\begin{align}
		\norm{v}_\calA = \left( \inf_{\mu \in \calM(I)} \norm{\mu}_{TV} \st D\mu= v \right), \tag{$\calP_\calA$}
	\end{align}
	with the convention that $\norm{v}_\calA =\infty$ if $v \notin \ran D$.
\end{defi}

By using standard methods, one can prove that $(\calP_\calA)$ has a minimizer. Let us for completeness carry out the argument.

\begin{lem} \label{lem:reprAtomicMeasure}
	For every $v \in \calH$ with $\norm{v}_\calA< \infty$, there exists a $\mu \in \calM(I)$ with $v=D\mu$ and $\norm{v}_\calA=\norm{\mu}_{TV}$. In other words, $(\calP_\calA)$ has a minimizer.
\end{lem}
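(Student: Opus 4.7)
The plan is a textbook weak-$*$ compactness argument in the spirit of the direct method of the calculus of variations. Since $\norm{v}_\calA < \infty$, the set of feasible measures $F_v = \set{\mu \in \calM(I) : D\mu = v}$ is nonempty, and one can pick a minimizing sequence $(\mu_n) \sse F_v$ with $\norm{\mu_n}_{TV} \to \norm{v}_\calA$. This sequence is norm-bounded, and $\calM(I)$ is the dual of $\calC_0(I)$; since $I$ is a locally compact separable metric space, $\calC_0(I)$ is separable. By Banach--Alaoglu together with metrizability of the weak-$*$ topology on norm-bounded subsets of duals of separable Banach spaces, a subsequence $\mu_{n_k}$ converges weakly-$*$ to some $\mu \in \calM(I)$.

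Next, I would verify the two remaining pieces: feasibility $D\mu = v$, and $\norm{\mu}_{TV} = \norm{v}_\calA$. For feasibility, I use the identity $D^* = T$ already established in the text. For any $w \in \calH$,
\begin{align*}
  \sprod{D\mu, w}_\calH = \sprod{\mu, Tw}_{\calM(I), \calC_0(I)} = \lim_{k \to \infty} \sprod{\mu_{n_k}, Tw} = \lim_{k \to \infty} \sprod{D\mu_{n_k}, w}_\calH = \sprod{v, w}_\calH,
\end{align*}
where the second equality is the definition of weak-$*$ convergence and crucially requires $Tw \in \calC_0(I)$, i.e. it requires $(\vphi_x)_{x \in I}$ to be a dictionary in the sense of Definition \ref{defi:dict}. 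Since $w$ was arbitrary, $D\mu = v$, so $\mu \in F_v$.

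For the norm bound, recall that the total variation norm is the norm of $\mu$ as an element of the dual $\calC_0(I)^*$, and in particular, dual norms are weak-$*$ lower semicontinuous. Thus
\begin{align*}
  \norm{v}_\calA \leq \norm{\mu}_{TV} \leq \liminf_{k \to \infty} \norm{\mu_{n_k}}_{TV} = \norm{v}_\calA,
\end{align*}
where the first inequality is simply feasibility. Hence $\norm{\mu}_{TV} = \norm{v}_\calA$, and $\mu$ is the desired minimizer.

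The only step that genuinely uses something specific to the setup of this paper (rather than generic functional analytic machinery) is the transfer of feasibility through the weak-$*$ limit: without the requirement that the test map maps into $\calC_0(I)$, the functional $\mu \mapsto \sprod{D\mu, w}_\calH$ need not be weak-$*$ continuous, and the minimizing sequence could escape $F_v$ in the limit. Everything else is standard.
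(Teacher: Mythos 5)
Your argument is correct and is essentially the same approach as the paper's: both use Banach--Alaoglu for weak-$*$ compactness of the bounded ball, weak-$*$ lower semicontinuity of $\norm{\cdot}_{TV}$, and weak-$*$ continuity of $D$ (which you establish inline by testing against $w \in \calH$, while the paper asserts it as ``$D$ is weak-$*$-weak continuous''). The only cosmetic difference is that you run the classical direct method with a minimizing sequence (hence invoking separability of $\calC_0(I)$ for metrizability), whereas the paper phrases it topologically — an lsc function on a weak-$*$ compact feasible set attains its infimum — which sidesteps the need for sequences; both are standard packagings of the same idea.
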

\begin{proof}
 	Let $R \in \R$ with $R> \norm{v}_A$  be arbitrary and consider the function
	\begin{align*}
		a : \set{ \mu : D\mu = v , \norm{\mu}_{TV} \leq R} \to \R, \mu \to \norm{\mu}_{TV}.
	\end{align*}
	Due to the fact that $D$ is weak-$*$-weak continuous (it is strongly continuous and linear), $a$ is defined on a weak-$*$-compact domain (we invoked the Banach-Alaoglu theorem). $\mu$ is further weak-$*$-lower semi-continuous, meaning that it is continuous if $\R$ is equipped with the topology $\tau_{>} := \set{ (a, \infty), a \in \R \cup \set{-\infty}}$. To see this, we need to prove that for every $a \in \R$,
	\begin{align*}
		\set{\mu \in \calM(I) \, \vert \, \norm{\mu}_{TV}>a} = \set{ \mu \ \vert \ \exists v \in \calC_0(I) : \abs{\sprod{v,\mu}} >a} = \bigcup_{ v \in \calC_0(I)} \set{ \mu \ \vert  \ \abs{\sprod{v,\mu}} >a}
	\end{align*}
	is weak-$*$ open. This is however clear: By definition, $\set{\mu \  \vert \ \abs{\sprod{f,\mu}} >a}$ is  weak-$*$-open, and arbitrary unions of opens sets are open.
	
	We can conclude that the image of $a$ is compact in $\tau_{>}$. The compact sets in $\tau_{>}$ are however exactly the sets $S \sse \R$ with $\inf S \in S$. Hence, $a$ has a minimizer, which is exactly what we need to prove.
\end{proof}

In the following, we will call a measure $\mu_*$ which obeys $D\mu_* = v$ and $\norm{\mu_*}_{TV}= \norm{v}_{\calA}$ an \emph{atomic decomposition} of $v$. Note that we do not assume that this atomic decomposition is unique. Also note that there is no guarantee that $\mu_*$ is the in some sense sparsest measure satisfying $D\mu_* = v$, but rather the one with the smallest $TV$-norm.

\begin{rem} \label{rem:geomAtom}
	In the upcoming paper \cite{infDimCS}, a proof will be presented that one can alternatively define the atomic norm as
	\begin{align}
		\norm{v}_A= \inf \set{t \ \vert \ v \in t \clonv \widetilde{\Phi}}, \label{eq:geomAtom}
	\end{align}
	where $\widetilde{\Phi}$ is the \emph{extended dictionary}  $(\widetilde{\vphi}_{x,\omega})_{x \in I, \omega \in \sph}$, $\widetilde{\vphi}_{x,\omega}=\omega \vphi_x$. This shows that our concept of an atomic norm is very tightly related to the one defined in \cite{chandrasekaran2012convex}. 
	
	In the particular case of $\K=\R$ , $\sph = \set{\pm 1}$, i.e. for symmetric real dictionaries, the definitions  coincide. Also assuming that the dictionary is finite, this fact provides us with an intuitive explanation of the meaning of an atomic decomposition: It reveals on which face of the polytope $\norm{v}_\calA \overline{conv} \Phi$ $v$ is lying. (see Figure \ref{fig:AtomDecomp}). )

\end{rem}

\begin{figure}
	\centering
	\includegraphics[scale=.45]{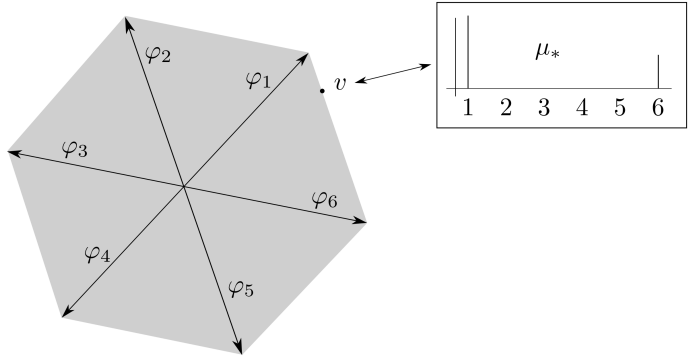}
	\caption{The signal $v$ lies on the face of $\norm{v}_\calA \cdot \clonv \Phi$ which is spanned by $\vphi_1$ and $\vphi_6$. Consequently, it has an atomic decomposition supported on $\set{1,6}$. \label{fig:AtomDecomp}}
\end{figure}

\subsection{Compressed Sensing in Infinite Dimensions}

Now that we have defined atomic norms for (possibly) infinite dimensional signals, let us briefly describe how they could be used to recover structured signals. A \emph{structured signal} is thereby a signal $x_0 \in \calH$ which has a sparse representation in a dictionary $(\vphi_x)_{x\in I}$:
\begin{align*}
	x_0 = \sum_{i=1}^s c_{x_i} \vphi_{x_i} = D\left(\sum_{i=1}^s \delta_{x_i}\right).
\end{align*}
We now take finitely many linear measurements of $x_0$ using the operator $A: \calH \to \K^m$ and want to recover $x_0$ from $b=Ax_0$. The method we would like to consider for this task is
\begin{align}
	\min \norm{x}_\calA \st Ax=b. \tag{$\calP_\calA$}
\end{align}
Due to Lemma \ref{lem:reprAtomicMeasure}, this minimization program is equivalent to 
\begin{align*}
	\min \norm{\mu}_{TV} \st AD\mu=b \tag{$\calP_D$}.
\end{align*}
Assuming continuity of $A$, we can by invoking Lemma \ref{lem:reprAtomicMeasure} (using $(A\vphi_x)_{x\in I}$ as our dictionary) that $(\calP_D)$ and therefore also $(\calP_\calA)$ always has a solution. Using standard methods of convex analysis, one can derive optimality conditions for $(\calP_\calA)$. Since we will carry out similar arguments for slightly more complicated setups in the next section, we choose to not include this. \newline 

The following observation is not hard to make, but will be important:

\begin{lem}
	For a dictionary $(\vphi_x)_{x \in I} \sse \calH$ and measurement operator $A: \calH \to \K^m$, define an \emph{extended dictionary} $(\widetilde{\varphi}_{x, \omega})_{x \in I, \omega \in \sph}$ ($\sph \sse \C$ denotes the unit circle)  through
	\begin{align*}
		\widetilde{\vphi}_{x, \omega} = \omega \vphi_x
	\end{align*} and let $\widetilde{A} : \calH \to \R^{2m}, x \mapsto (\re(Ax), \im(Ax))$. Then the problems $(\calP_D)$ and 
	\begin{align}
		 \min_{\widetilde{\mu}\in \calM(I\cap \sph)} \widetilde{\mu}(I \times \sph)  \st \widetilde{\mu} \geq 0 , \widetilde{A}\widetilde{D}\widetilde{\mu} = (\re(b), \im(b)), \tag{$\calP_D^+$}
	\end{align}
	where $\widetilde{D}$ denotes the synthesis operator of the extended dictionary, have the same optimal values.
\end{lem}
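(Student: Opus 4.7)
My plan is to prove equality of the optimal values by exhibiting, for each feasible point of one problem, a feasible point of the other with the same objective value. This gives both $\leq$ and $\geq$ in the natural way.

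For the direction $\mathrm{val}(\calP_D^+) \leq \mathrm{val}(\calP_D)$, I would take any feasible $\mu \in \calM(I)$ for $(\calP_D)$ and use the polar decomposition $\mu = \omega_\mu \cdot \abs{\mu}$, where $\omega_\mu : I \to \sph$ is a measurable function of unit modulus (obtained from the Radon--Nikodym derivative $d\mu/d\abs{\mu}$). Then I define $\widetilde{\mu}$ as the pushforward of $\abs{\mu}$ under the map $x \mapsto (x, \omega_\mu(x))$. This yields a nonnegative Radon measure on $I\times \sph$ with $\widetilde{\mu}(I\times\sph) = \abs{\mu}(I) = \norm{\mu}_{TV}$. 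A short computation shows
\begin{align*}
\widetilde{D}\widetilde{\mu} = \int_{I\times \sph} \omega\, \vphi_x \, d\widetilde{\mu}(x,\omega) = \int_I \omega_\mu(x) \vphi_x \, d\abs{\mu}(x) = \int_I \vphi_x \, d\mu(x) = D\mu,
\end{align*}
so $\widetilde{A}\widetilde{D}\widetilde{\mu} = (\re(AD\mu),\im(AD\mu)) = (\re b,\im b)$, as required.

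For the reverse direction $\mathrm{val}(\calP_D) \leq \mathrm{val}(\calP_D^+)$, I would take any feasible $\widetilde{\mu}\geq 0$ on $I\times \sph$ and define a complex measure $\mu$ on $I$ by pushing forward the complex-valued measure $\omega \cdot \widetilde{\mu}$ under the projection $(x,\omega)\mapsto x$; equivalently, $\mu(E) := \int_{E\times \sph} \omega\, d\widetilde{\mu}(x,\omega)$ for each Borel $E \sse I$. Because $\abs{\omega}=1$ on $\sph$, we get $\norm{\mu}_{TV} \leq \widetilde{\mu}(I\times \sph)$. The identity $D\mu = \widetilde{D}\widetilde{\mu}$ holds by the same computation as above read in reverse, and feasibility $AD\mu = b$ follows from the hypothesis $\widetilde{A}\widetilde{D}\widetilde{\mu}=(\re b,\im b)$.

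The main technical point is ensuring the constructions produce genuine Radon measures with the claimed properties. On the $(\calP_D)\to(\calP_D^+)$ side, one needs a measurable selection of the phase $\omega_\mu$ (standard via Radon--Nikodym on a separable metric space). On the $(\calP_D^+)\to(\calP_D)$ side, one must verify that the complex push-forward is well-defined and that total variation behaves correctly under push-forward; both follow from standard measure theoretic arguments since $I\times \sph$ is again a locally compact separable metric space and $\widetilde{\mu}$ is a finite measure. Once these measure-theoretic points are handled, the equality of optimal values is immediate.
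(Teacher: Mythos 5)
Your proposal is correct and follows essentially the same route as the paper: in one direction you push $\abs{\mu}$ forward along the graph of the polar phase (the paper writes this as $\widetilde{\mu}(A) = \int_{I\times\sph}\ind_A(x,\omega)\,d\delta_{v(x)}(\omega)\,d\abs{\mu}(x)$), and in the other you average over $\omega \in \sph$ to collapse $\widetilde{\mu}$ to a complex measure on $I$, noting the TV norm can only decrease. The paper also briefly records up front that passing from $A$ to $\widetilde{A}$ is harmless, which you implicitly use in the feasibility checks; otherwise the arguments coincide.
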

\begin{proof}
	Let us first note that the distinction between $A$ and $\widetilde{A}$ causes no problems - $Av =b$ if and only if $\widetilde{A}v= (\re(b), \im(b))$. Also note that since the measures in $(\calP_D^+)$ are assumed to be positive, they obey $\norm{\widetilde{\mu}}_{TV} = \widetilde{\mu}(I \times \sph)$.

	Now let $\mu$ be a feasible point for $(\calP_D)$. Let $\mu = v \abs{\mu}$ be the \emph{polar decomposition} \cite[p.124, Thm. 6.12]{Rudin} of $\mu$ and define $\widetilde{\mu}$ through
	\begin{align*}
		\widetilde{\mu}(A) = \int_{I \times \sph} \ind_A(x, \omega) d \delta_{ v(x)}(\omega) d \abs{\mu}(x).
	\end{align*}
	Then $\widetilde{\mu} \geq 0$, $\norm{\widetilde{\mu}}_{TV}=\norm{\mu}_{TV}$ and $\widetilde{D}{\widetilde{\mu}}=D\mu$, since
	\begin{align*}
		\sprod{D\mu, v} &= \int_I \sprod{v, \vphi_x} d\mu(x) = \int_I\sprod{v, \vphi_x} v(x) d\abs{\mu}(x) \\
		&= \int_{I} \int_{\sph} \sprod{v, \omega \vphi_x} d \delta_{ v(x)}(\omega) d \abs{\mu}(x) = \int_{I \times \sph} \sprod{v, \widetilde{\vphi}_{x,\omega}} d\widetilde{\mu}(\omega,x).
\end{align*}	 
This proves that the optimal value of $(\calP_D^+)$ is smaller than the one of $(\calP_D)$.
	
	Now let $\widetilde{\mu}$ be a feasible point for $(\calP_D^+)$. Define $\mu$ through
	\begin{align*}
		\mu(A) = \int_I\ind_A(x) \int_{\sph} \omega d\widetilde{\mu}(\omega,x).
	\end{align*}
	Then $\norm{\widetilde{\mu}}_{TV}\geq\norm{\mu}_{TV}$, since
	\begin{align*}
		\norm{\mu}_{TV} = \sup_{\substack{f \in \calC_0(I)\\ \norm{f}_\infty \leq 1}} \abss{\int f(x) d\mu(x)} = \sup_{\substack{f \in \calC_0(I)\\ \norm{f}_\infty \leq 1}} \abss{\int_I \int_{\sph} \omega f(x) d\widetilde{\mu}(\omega,x)} \leq \sup_{\substack{\widetilde{f} \in \calC_0(I\times \sph)\\ \norm{\widetilde{f}}_\infty \leq 1}} \abss{\int_{I \times \sph} \widetilde{f}(\omega,x) d\widetilde{\mu}(\omega,x)} = \norm{\widetilde{\mu}}_{TV}.
	\end{align*}
	Also, $\widetilde{D}\widetilde{\mu}=D\mu$, since
	\begin{align*}
		\sprod{\widetilde{D}\widetilde{\mu},v} = \int_{I \times \sph} \sprod{v, \widetilde{\vphi}_{x,\omega}} d\widetilde{\mu}(\omega, x) = \int_I \sprod{v, \vphi_x} \int_{\sph_1}\omega d\widetilde{\mu}(\omega, x) = \int_I \sprod{v, \vphi_x} d\mu(x) = \sprod{D\mu, v}.
	\end{align*}
 This implies that the optimal value of $(\calP_D)$ is smaller than the one of $(\calP_D^+)$. The proof is finished.
\end{proof}

From the proof of the last lemma, the following result follows. The convenience of it will become clear in the sequel.
\begin{cor} \label{cor:PolarPDplus} If $\mu_*$ is a solution of $(\calP_D)$, then the measure $\widetilde{\mu}_*$ defined through
\begin{align*}
	\widetilde{\mu}_*(A) =\int_{I \times \sph} \ind_A(\omega,x) d \delta_{v(x)}(\omega) d\abs{\mu}_*(x)
\end{align*}
is a solution of $(\calP_D^+)$.
\end{cor}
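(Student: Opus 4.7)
The corollary is essentially an immediate consequence of the first half of the preceding lemma's proof, so my plan is to not redo any calculation but only to trace which objects coincide.

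First, I would apply the lifting construction from the lemma to the specific measure $\mu_*$: writing the polar decomposition $\mu_* = v \abs{\mu_*}$ and defining $\widetilde{\mu}_*$ as in the statement. The lemma's proof already verifies three facts about this construction: (i) $\widetilde{\mu}_* \geq 0$, (ii) $\widetilde{D}\widetilde{\mu}_* = D\mu_*$, and (iii) $\norm{\widetilde{\mu}_*}_{TV} = \norm{\mu_*}_{TV}$. Since $\mu_*$ is feasible for $(\calP_D)$, property (ii) combined with $AD\mu_* = b$ (and the equivalence $Av=b \iff \widetilde{A}v = (\re b, \im b)$ noted at the start of the lemma's proof) shows that $\widetilde{\mu}_*$ satisfies the affine constraint of $(\calP_D^+)$. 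Together with (i), this proves feasibility of $\widetilde{\mu}_*$ for $(\calP_D^+)$.

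It remains to upgrade feasibility to optimality. By the previous lemma, $(\calP_D)$ and $(\calP_D^+)$ share the same optimal value; call it $\tau$. Since $\mu_*$ is optimal for $(\calP_D)$, $\norm{\mu_*}_{TV} = \tau$. Property (iii) then gives $\norm{\widetilde{\mu}_*}_{TV} = \tau$, and for a nonnegative measure the total variation norm equals $\widetilde{\mu}_*(I \times \sph)$, which is the objective of $(\calP_D^+)$. A feasible point attaining the optimal value is a minimizer, so $\widetilde{\mu}_*$ solves $(\calP_D^+)$.

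There is no genuine obstacle here; the only thing I would want to double check is the Radon--Nikodym/polar decomposition step, namely that $v(x)$ is a well-defined measurable function of modulus $1$ with respect to $\abs{\mu_*}$ so that the pushforward-type definition of $\widetilde{\mu}_*$ makes sense. This is standard (\cite[p.124, Thm. 6.12]{Rudin}) and has already been used in the lemma's proof, so I would simply refer back to it rather than redevelop it.
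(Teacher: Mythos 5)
Your proposal is correct and takes essentially the same approach the paper intends: the paper gives no separate proof and simply notes that the corollary follows from the proof of the preceding lemma, which is exactly what you spell out — feasibility of $\widetilde{\mu}_*$ from the lifting construction, equality of total variation norms, and equality of the two optimal values to upgrade feasibility to optimality.
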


\section{A Soft Certificate Condition} \label{sec:main}

As was discussed in the introduction, we will now develop a theory for soft recovery using general atomic norms in infinite dimensions. The argument is similar to the one carried out in \cite{Flinth2016Soft}, but there are many subtleties to be taken care of when going over to the infinite dimensional setting.

The heart of the argument will be the fact that if $\mu_*$ is a solution of $(\calP_D)$, the measure $\widetilde{\mu}_*$ defined in Corollary \ref{cor:PolarPDplus} is a solution of  $(\calP_D^+)$ restricted to measures supported on $\supp \widetilde{\mu}_*$. The optimality conditions of the latter problem are particularly nice, and they will allow us to prove properties of the minimizer $\mu_*$. In order to derive said conditions, we will use the duality theory of convex programs in infinite dimensions developed in \cite{BorweinLewis1992}.

 The theorem we will apply uses the notion the \emph{quasi relative interior} of $\qri(C)$ a convex subset $C$ of a locally convex vector space $X$. The authors of \cite{BorweinLewis1992} define it as the set of vectors $x$ in $C$ so that the closure of the cone generated by $x-C$, $\cone(x-C) = \set{\lambda(x-c) \ \lambda >0, c \in C}$  is a subspace. The quasi relative interior is not necessarily equal the \emph{relative interior} $\text{ri}(C)$, which is the interior of $C$ when regarded as a subset of the affine hull of $C$. Another notion the theorem uses is the one of \emph{proper} convex functions $f: X \to \R \cup {\infty}$. A convex function is proper if it has a non-empty domain (i.e. pre-image of $\R$). 
 
\begin{theo} (Simplified version of \cite[p.34, Cor. 4.6]{BorweinLewis1992}) \label{th:duality}
	Let $X$ be a locally convex real vector space, $g: X \to \R \cup \set{\infty}$, $h: \R^m \to \R \cup \set{\infty}$ be proper convex functions  and $M: X \to \R^m$ be linear. Suppose that
	\begin{align*}
		M\qri(\dom g) \cap \ri(\dom h) \neq \emptyset.
	\end{align*}
	Then the optimal values of the following two programs are equal:
	\begin{align}
		\inf_{x\in X}  \, &g(x) + h(Mx), \tag{$\calP$} \\
		 \sup_{\lambda \in \R^m}  \, &g^*(M^*\lambda) + h^*(\lambda). \tag{$\calD$}
	\end{align}
	Thereby $f^*$ denotes the convex conjugate of $f$ \cite[p.102f]{Rockafellar1970}.
\end{theo}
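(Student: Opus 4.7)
Since the theorem is explicitly labelled a simplified instance of Corollary 4.6 in \cite{BorweinLewis1992}, my plan is not to reprove that result from scratch but to indicate the route I would follow to extract it in this restricted setting, where the second space is $\R^m$ and therefore enjoys ordinary relative interior geometry on that side. Weak duality is essentially tautological: from the Fenchel--Young inequality $g(x) + g^*(M^*\lambda) \geq \sprod{M^*\lambda, x}$ combined with $h(Mx) + h^*(\lambda) \geq \sprod{\lambda, Mx}$ and the adjoint identity $\sprod{M^*\lambda, x} = \sprod{\lambda, Mx}$, one obtains the inequality between the primal and dual optimal values (the sign convention of the statement determining which direction). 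The substance of the theorem is therefore strong duality.

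For strong duality I would introduce the standard perturbation function
$$p: \R^m \to \R \cup \{\pm\infty\}, \quad p(y) = \inf_{x \in X} g(x) + h(Mx + y),$$
and use the classical fact that the primal value equals $p(0)$ while the dual value can be identified with $p^{**}(0)$. These two quantities coincide provided $p$ is convex (which is immediate from convexity of $g, h$ and linearity of $M$), proper, and lower semicontinuous at $0$, the last condition being guaranteed by $0 \in \ri(\dom p)$. Since $\dom p \supseteq \dom h - M(\dom g)$, the hypothesis $M\,\qri(\dom g) \cap \ri(\dom h) \neq \emptyset$ is tailored exactly to place $0$ in $\ri(\dom p)$: a point in the intersection witnesses that a full neighborhood of $0$ within $\ri(\dom h) - M\,\qri(\dom g)$ lies in $\dom p$.

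The main obstacle is the final reduction, which is where the quasi relative interior really earns its keep. In an infinite-dimensional locally convex space $X$ the usual relative interior of $\dom g$ can be empty, so one cannot directly run Rockafellar's finite-dimensional separation argument to produce the dual multiplier. The defining property of $\qri$ --- that $\overline{\cone(x - \dom g)}$ is a linear subspace for $x \in \qri(\dom g)$ --- is precisely what is needed to ensure that a Hahn--Banach separation of the epigraph of $p$ from the downward half-line $\{0\} \times (-\infty, p(0))$ in $\R^m \times \R$ yields a \emph{non-vertical} hyperplane, and thus a genuine dual variable $\lambda \in \R^m$ rather than merely a continuous linear functional on $X$ that happens to have no $\R^m$-component. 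Since Borwein and Lewis carry out this separation argument carefully in their Corollary 4.6 in a more general two-sided setting, I would simply verify that the present hypotheses match theirs --- taking the second space in their formulation to be the finite-dimensional $\R^m$, so that $\qri$ there collapses to $\ri$ --- and invoke their result directly rather than rederive the delicate separation step.
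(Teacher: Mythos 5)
The paper does not prove this result: it is stated as a simplification of Corollary 4.6 of Borwein and Lewis and is simply cited, so there is no in-paper proof to compare against. Your decision to defer the hard separation step to Borwein--Lewis while sketching the perturbation-function route ($p(0)=p^{**}(0)$, with $\qri$ supplying a non-vertical separating hyperplane so that the dual multiplier really lives in $\R^m$) is entirely appropriate and matches what the paper does.

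One thing to watch, though: the weak-duality line as you spell it out does not actually close under the paper's stated form of the dual. Adding $g(x)+g^*(M^*\lambda)\ge \langle M^*\lambda,x\rangle$ and $h(Mx)+h^*(\lambda)\ge\langle\lambda,Mx\rangle$ yields a right-hand side of $2\langle\lambda,Mx\rangle$, which does not cancel; the cross terms only vanish when one of the Fenchel--Young inequalities is written with $-M^*\lambda$ (equivalently, when the dual reads $\sup_\lambda -g^*(-M^*\lambda)-h^*(\lambda)$ in the standard convention). In fact the theorem as printed in the paper carries a sign/convention wrinkle --- the computed $g^*$ behaves as a concave conjugate (taking the value $-\infty$ off the constraint set), whereas the computed $h^*$ is the standard convex conjugate --- and the paper silently absorbs the discrepancy by a change of variable $\lambda\mapsto -p$ when deriving the concrete dual program $(\calD_D^+)$, which comes out correct. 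Your hedge that ``the sign convention of the statement determines which direction'' is fair, but any fully fleshed-out version of your sketch would need to pin those signs down, since as written the displayed weak-duality step is not valid.
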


We would now like to apply this theorem to $(\calP_D^+)$  restricted to measures supported on some set $I_*$. Here, $X$  is the space of real Radon measures on $I_*$, and $g$ and $h$ are defined as follows:
\begin{align*}
	g(\mu) = \begin{cases} \int_{I_*} d\mu  &\text{ if } \mu \geq 0 \\
							\infty &\text{ else} \end{cases}, \quad 
	h(p) = \begin{cases} 0 &\text{ if } p=\widetilde{b} \\
							\infty &\text{ else,} \end{cases}
\end{align*}
where we defined $\widetilde{b}=(\re(b), \im(b))$.
These are both proper. Further, $\ri(\dom h) = \{\widetilde{b}\}$ and $\qri(\dom g) = \set{ \mu \ \vert \ \mu \geq 0, \supp \mu = I_*}$ -- see \cite[p. 28, Ex. 3.11]{BorweinLewis1992}\footnote{Note that the therem in the reference given technically only applies to the case that $I_*$ is compact Hausdorff space. The compactness is however only used to be able to apply Urysohn's Lemma, which holds on locally compact metric spaces as well.}. Their convex conjugates are given by
\begin{align*}
	g^*(\psi) = \begin{cases} 0  &\text{ if } \sup \psi \leq 1 \\
							-\infty &\text{ else} \end{cases}, \quad 
	h(p) =\langle \widetilde{b},p \rangle.
\end{align*}
We conclude that the dual program of $(\calP_D^+)$ restricted to $I_*$ is 
\begin{align}
	\sup_{p \in \R^m} \langle b, p\rangle  \st \sup_{z \in I_*} ((\widetilde{A}\widetilde{D})^*p)(z) \leq 1. \tag{$\calD_D^+$}
\end{align}

We have now presented all the theory we need in order to prove the soft recovery condition. We will make a structural assumption on the dictionary which will come in handy, namely that it consists of $r$ subdictionaries $\Phi_j$ (note that $r=1$ is possible, so this is not per se restricting the dictionaries we can consider). Let us further introduce the notation:
\begin{align*}
	\norm{v}_{\calA_j}^* = \sup_{\vphi \in \Phi_j} \abs{\sprod{v, \vphi}}.
\end{align*}
Now we are ready to formulate and prove our main result. 

\begin{theo} \label{th:soft} Let $(\vphi_x)_{x \in I}$ be a normalized dictionary for $\calH$ and $A: \calH \to \K^m$ be continuous. Let further $v_0 \in \calH$ be given through
\begin{align*}
	v_0 =c_{x_0}^0\delta_{x_0} + D(\mu^c)
\end{align*}
for a complex scalar $c_{x_0}^0$ and a measure $\mu^c$ such that  $\norm{v_0}_\calA= \norm{ c^0_{x_0}\delta_{x_0}+\mu^c}_{TV} = 1$.

Now let $j_0 \in \set{1, \dots, r}$, with $x_0 \in \calA_{j_0}$, and $\sigma\geq 0, t \in (0,1]$. Suppose that there exists a $\nu \in \ran A^*$ with
\begin{align}
	\int_I \sprod{\vphi_x, \nu} d(c_{x_0} \delta_{x_0} + \mu^c)  &\geq 1 \label{eq:Ankare} \\
	\norm{\nu}_{\calA_{j}}^* &<1, \quad j \neq j_0 \label{eq:otherSubs} \\
	\abs{\sprod{\nu, \vphi_{x_0}}} &\leq \sigma \label{eq:atPoint} \\
	\norm{\Pi_{\sprod{\vphi_{x_0}}^\perp}\nu}_{\calA_{j_0}} &\leq 1-t. \label{eq:orthCompSameSub}
\end{align}

Then the following is true: If $\mu_*$ is a solution of $(\calP_D)$, there exists an $x \in \supp \mu_*$ with $\vphi_x \in \Phi_{j_0}$ such that
\begin{align} \label{eq:scalProd}
	\abs{\sprod{\vphi_x, \vphi_{x_0}}} \geq \frac{t}{\sigma} .
\end{align}

\end{theo}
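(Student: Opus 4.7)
The strategy I would use is a contradiction-by-duality argument: assuming the conclusion fails, I would construct a feasible point for the restricted dual of $(\calP_D^+)$ whose objective value strictly exceeds the primal optimum, contradicting strong duality (Theorem \ref{th:duality}). The setup is as follows. Given a minimizer $\mu_*$ of $(\calP_D)$, Corollary \ref{cor:PolarPDplus} produces a solution $\widetilde{\mu}_*$ of $(\calP_D^+)$ whose support $I_* := \supp\widetilde{\mu}_*$ is contained in $\set{(x,\omega)\,\vert\, x \in \supp\mu_*}$, and $\widetilde{\mu}_*$ remains optimal among positive measures supported on $I_*$. Applying Theorem \ref{th:duality} to this restricted program (the quasi-interior condition is fulfilled since $\widetilde{\mu}_*$ has full support on $I_*$) yields that the dual supremum equals $\norm{\widetilde{\mu}_*}_{TV} = \norm{v_0}_\calA = 1$.

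Now I would argue by contradiction: suppose every $x \in \supp\mu_*$ with $\vphi_x \in \Phi_{j_0}$ satisfies $\abs{\sprod{\vphi_x,\vphi_{x_0}}} < t/\sigma$. Write $\nu = A^*p$ for the corresponding real dual variable $p \in \R^{2m}$. Decomposing $\nu = \sprod{\nu,\vphi_{x_0}}\vphi_{x_0} + \Pi_{\sprod{\vphi_{x_0}}^\perp}\nu$, the triangle inequality combined with \eqref{eq:atPoint} and \eqref{eq:orthCompSameSub} gives, for any $\vphi_x \in \Phi_{j_0}$,
\begin{align*}
\abs{\sprod{\vphi_x,\nu}} \leq \sigma\abs{\sprod{\vphi_x,\vphi_{x_0}}} + (1-t) < t + (1-t) = 1,
\end{align*}
while \eqref{eq:otherSubs} directly yields $\abs{\sprod{\vphi_x,\nu}} < 1$ for $\vphi_x \in \Phi_j$ with $j \neq j_0$. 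Hence the strict bound $\abs{\sprod{\vphi_x,\nu}} < 1$ holds pointwise on $\supp\mu_*$.

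The next step is to upgrade this pointwise strict inequality to a uniform one: $M := \sup_{x \in \supp\mu_*} \abs{\sprod{\vphi_x,\nu}} < 1$. This is where the structure of $\calC_0(I)$ enters — the test map $x \mapsto \sprod{\vphi_x,\nu}$ vanishes at infinity, so outside some compact $K$ the function is bounded by $1-\epsilon$, and on the compact intersection $\supp\mu_* \cap K$ the continuous function attains its maximum, which by the pointwise bound is strictly below $1$. At $(x,\omega) \in I_*$ the dual constraint functional reduces (modulo real/imaginary bookkeeping) to $\re(\overline{\omega}\sprod{\vphi_x,\nu})$, which is bounded by $\abs{\sprod{\vphi_x,\nu}} \leq M$. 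Meanwhile, rewriting \eqref{eq:Ankare} via $\sprod{\widetilde{b},p} = \re\sprod{v_0,\nu} = \re\int\sprod{\vphi_x,\nu}\,d(c_{x_0}^0\delta_{x_0} + \mu^c) \geq 1$ shows the dual objective at $p$ is already at least $1$. Then $p/M$ is dual-feasible for the restricted problem with objective $\geq 1/M > 1$, contradicting the duality value $1$ obtained above.

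The main obstacle I anticipate is the careful handling of the complex/real reparameterization used in $(\calP_D^+)$: one must verify that the dual certificate condition \eqref{eq:Ankare} genuinely delivers a real-valued lower bound on $\sprod{\widetilde{b},p}$, and that the restricted dual constraint — living on the enlarged index set $I \times \sph$ — is controlled on $I_*$ by $\abs{\sprod{\vphi_x,\nu}}$ rather than by some less favorable quantity depending on the phase $\omega$. The second subtlety, promoting strict pointwise bounds to a strict uniform bound when $\supp\mu_*$ is possibly non-compact, is resolved cleanly by the defining property of a dictionary (the test map lands in $\calC_0(I)$), but must be stated explicitly.
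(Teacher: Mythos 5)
Your proposal follows essentially the same route as the paper: both pass to the restricted positive program on $I_*=\supp\widetilde{\mu}_*$ via Corollary \ref{cor:PolarPDplus}, invoke Theorem \ref{th:duality}, relate the real dual variable $p$ to $\nu=A^*\lambda$, use \eqref{eq:Ankare} to get $\sprod{\widetilde{b},p}\geq 1$, and conclude via the scaling argument that the dual constraint must saturate somewhere on the support, after which \eqref{eq:otherSubs}--\eqref{eq:orthCompSameSub} pin down the location. The paper phrases this directly (producing a specific $(\omega,x)\in I_*$ with $\re(\omega\sprod{\vphi_x,\nu})\geq 1$ and chaining inequalities) rather than as a global contradiction, which spares the extra compactness/uniformity step you insert to pass from a pointwise to a uniform strict bound $M<1$; also note that the dual optimum is only $\leq 1$ (equal to $\norm{\mu_*}_{TV}\leq\norm{v_0}_{\calA}=1$), not $=1$ as you wrote, though this does not affect the argument.
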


\begin{proof}
	Let $\widetilde{\mu}_*$ be the measure on $I \times \sph$ defined in Corollary \ref{cor:PolarPDplus}, which by the same corollary is a solution of $(\calP_D^+)$, in particular when it is restricted to measures supported on $$I_*=\supp \widetilde{\mu}_* = \overline{\set{(v(x),x) \ \vert \ x \in \supp \mu_*}},$$
	where $v \cdot \abs{\widetilde{\mu}_*}$ is the polar decomposition of $\widetilde{\mu_*}$. Theorem \ref{th:duality} applies to this program: $\widetilde{A}\widetilde{D} \qri( \dom g) = \widetilde{A} \set{\mu \ \vert \supp \mu = I_*}$ has a non-empty intersection with $\ri(\dom(h))= b$, since $\widetilde{A} \widetilde{D}\widetilde{\mu}=b$. Hence, the optimal value of $\calD_D^+$ is equal to $\norm{\mu_*}_{TV}$. 
	
	Due to the fact that $\mu_0= \delta_{x_0} + \mu_c$ is a feasible point for the original problem $\calP_\calA$, $\norm{\mu_0}_{TV} \leq \norm{ c_{x_0}^0 \delta_{x_0} + \mu_c}_{TV} =1$. This implies that if $\langle \widetilde{b},p \rangle \geq 1$ for some $p$, then necessarily $\sup_{(x, \omega) \in I_*} ((\widetilde{A}\widetilde{D})^* p )(\omega,x)\geq 1$: If $\sup_{(x, \omega) \in I_*} ((\widetilde{A}\widetilde{D})^* p )(\omega,x)< 1$, we could by multiplying $p$ by a positive scalar slightly larger than one construct a vector $p'$ which is a feasible point for $(\calD_{D}^+)$ but have a value of $\langle \widetilde{b}, p' \rangle >1$, which would contradict the fact that the optimal value of $(\calD_D^+)$ is at most $1$.
	
	Due to the fact that $\nu \in \ran A^*$, there exists a $\lambda \in \K^m$ with $\nu = A^*\lambda$. Define $p= (\re(\lambda), \im(\lambda)) \in \R^{2m}$. Then for $v \in \calH$ arbitrary
	\begin{align}
		\langle v, \widetilde{A}^*p \rangle = \langle{\widetilde{A}v,p}\rangle = \sprod{\re(Av),\re{\lambda}} + \sprod{\im(Av),\im(\lambda)} &= \re( \sprod{v, \re(\lambda)+ i \im(\lambda)} ) \nonumber \\
		&= \re(\sprod{v, A^*\lambda}) = \re(\sprod{v, \nu}). \label{eq:nuVsP}
	\end{align}
	This implies that, since $\widetilde{b}= \widetilde{A}v_0$
	\begin{align*}
		\sprod{b,p} = \langle v_0, \widetilde{A}^*p \rangle =  \re(\sprod{v_0, \nu} ) =  \int_I \re(\sprod{\vphi_{x},\nu}) d(c_{x_0}^0\delta_{x_0} + \mu_c)\geq 1,
	\end{align*}
	where we applied \eqref{eq:Ankare}. We conclude that there exists an $(\omega, x) \in I_*$ with $((\widetilde{A} \widetilde{D})^*p)(\omega,x)\geq 1$ (note that the $\sup$ is really a $\max$ due to $(\widetilde{A} \widetilde{D})^*p \in \calC_0(I \times \sph)$). Note that $((\widetilde{A} \widetilde{D})^*p)(\omega,x)\geq 1$ can be equivalently restated as
	\begin{align}
		1 \leq \langle \delta_{\omega} \otimes \delta_x,(\widetilde{A} \widetilde{D})^*p \rangle  = \langle  \widetilde{D}\delta_{\omega} \otimes \delta_x,\widetilde{A}^*p \rangle = \re( \sprod{\omega \vphi_x, \nu}). \label{eq:prelargeScalProd}
	\end{align}
	We used \eqref{eq:nuVsP}. The form of $I_*$ implies that $x \in \supp \mu_*$. Due to \eqref{eq:otherSubs}, this $x$ must have the property $\vphi_x \in \calA_{j_0}$. Now we apply \eqref{eq:atPoint} and \eqref{eq:orthCompSameSub} to derive
	\begin{align*}
		1 \leq \re( \omega \sprod{\vphi_x, \vphi_{x_0}}\sprod{\vphi_{x_0}, \nu} + \omega \langle{\Pi_{\sprod{x_0}^\perp} \vphi_{x_0}, \nu}\rangle) \leq \abs{\sprod{\vphi_{x_0},\nu}} \abs{\sprod{\vphi_x, \vphi_{x_0}}} + \norm{\Pi_{\sprod{x_0}^\perp}\nu}_{\calA_{j_0}}^* \leq \sigma \abs{\sprod{\vphi_x, \vphi_{x_0}}} + (1-t).
	\end{align*}
	Rearranging brings the result.
\end{proof}   

Let us make a few comments yields moving on.

\begin{itemize}

\item In the following, we will sometimes call a vector $\nu$ fulfilling \ref{eq:Ankare}-- \ref{eq:orthCompSameSub} a \emph{soft certificate}.

\item 
We assumed that  the ground truth $v_0$ has an atomic decomposition of the form $c_{x_0}^0 \delta_{x_0} + \mu^c$. Hence, the atomic decomposition has a peak at $\delta_{x_0}$ of relative power $c_{x_0}^0$. $\mu^c$ was not assumed need to have a certain structure. In particular can be a train of $\delta$-peaks supported on other indices $(x_i)_{i=1}^s$, but it can also be more irregular. 

Also note that assuming that $\norm{c_{x_0}^0 \delta_{x_0} + \mu^c}_{TV}=1$ merely is a matter of normalization.

\item The condition of the theorem depends on $x_0$. It hence has to be applied for each index separately. The previous point however makes it clear that $v_0$ per se does not have to have a $1$-sparse atomic decomposition. Also, the fact that the theorem is to be applied to each index separately is not per se a weakness -- this we will see in Section  \ref{sec:sep}.

\item The theorem does not provide any statements on how the measure behaves more than that it will be supported in a point $x$, which in the sense of \eqref{eq:scalProd} is close to $x_0$. In particular, it does not only need to contain peaks, and can have many other 'spurious' peaks.
	
	For the sake of signal recovery, the information that the theorem gives us can still be put to good use: After having solved $(\calP_\calA)$ and checking where the support of the solution measure $\mu_*$ are located in the solution, we can choose points $\set{x_i}$ and restrict our search to signals which are linear combinations of the corresponding dictionary elements $\set{\varphi_{x_i}}$. Since these dictionary elements necessarily need to be close, solving the restricted (finite-dimensional) have better chances at succeeding to recover $v_0$ than the original problem.
\end{itemize}

In the remainder of the paper, we will see that the relatively abstract main result can be successfully applied in quite a few different concrete settings.

\section{Applications} \label{sec:Appl}

In this section, we discuss four different cases of atomic norm minimization, and which impact the main result has in each of those. In most parts, we will stay in the setting that $\calH$ is finite-dimensional, the reason being that we want to derive concrete recovery results using randomly generated measurement matrices. In the final part however, we will also consider an infinite dimensional example.

\subsection{$\ell_{12}$-minimization.} \label{sec:L12}
 The concept of soft recovery was first introduced by the author in \cite{Flinth2016Soft}. There, it was only treated in the context of $\ell_{12}$-minimization. The argument in that paper had the same flavour, but did not utilize the concept of dictionaries and atomic norms to the same extent. Let us briefly check back-combatibility in the meaning that the old result qualitatively is a special case of the result in this papear
 
 For a matrix $X \in \R^{k,d}$ with columns $X_i$, $i \in 1, \dots, d$, the $\ell_{12}$-norm is defined as the sum of the $\ell_2$-norms of its columns, i.e.
 \begin{align*}
 	\norm{X}_{12} = \sum_{i=1}^d \norm{X_i}_2.
 \end{align*}
Consider the dictionary
 \begin{align*}
 	\Phi^{12}:= \set{ \vphi_{i,\eta} = \eta e_i^* \ \vert \ \eta \in \sph^{k-1}, i \in [d]}. 
 \end{align*}
 It is not hard to convince oneself that this is a dictionary for $\R^{k,d}$, when $I = [d] \times \sph^{k-1}$ is equipped with the obvious metric. Furthermore, the $\ell_{12}$-norm is the atomic norm with respect to this dictionary, and the atomic decomposition is given through
 \begin{align*}
 	X = \sum_{i=1}^d \norm{X_i}_2 \cdot \eta_i e_i^*,
\end{align*}  
where $\eta_i$ are the normalized columns of $X$ (for a rigorous proof of this, see Section \ref{sec:L12Proof}.) Note that $\Phi^{12}$ naturally can be decomposed into the $d$ subdictionaries $\Phi^{12}_j = \set{\eta e_j^* \ \vert \ \eta \in \sph^{k-1}}$, and that $\norm{X}_{\calA_j} = \norm{X_j}_2$. Therefore, for an index $i_0$ and corresponding normalized column $\eta_{i_0}$, we have 
\begin{align*}
	X^0= \norm{X^0_{i_0}}_2 \eta_{i_0} e_{i_0}^* + D\left( \sum_{i \neq i_0} \norm{X_i}_2\delta_{\eta_i e_i^*}\right)
\end{align*} and \eqref{eq:Ankare}-\eqref{eq:orthCompSameSub} reads
\begin{align}
	\sum_{i \in [d]} \norm{X}_i \sprod{\eta_i, V_i} &\geq 1, \qquad  \norm{V_j}  <1 , \quad j \neq i_0   \label{eq:newfirst} \\
	\abs{\sprod{V_{i_0},\eta_{i_0}}} &\leq \sigma , \qquad \norm{ \Pi_{\sprod{\eta_{i_0}}^\perp }V_{i_0}}_2 \leq t \label{eq:newsecond}
	\end{align}
where $V$ is the $\nu$-vector from Theorem \ref{th:soft}. The first row of these conditions are exactly as in \cite{Flinth2016Soft}, whereas the second row is typologically different: In the previous paper, the corresponding conditions are \begin{align}
\norm{V_{i_0}}_2\leq \cos(\alpha)^{-1},  \qquad \angle\left(\eta_{i_0}, V_{i_0}\right) \leq \alpha, \label{eq:oldsecond}
\end{align}
where $\angle(x,y)$ denotes the angle between the vectors $x$ and $y$. 

Let us argue that the old condition qualitatively is the same as the new one. The key is to observe that \eqref{eq:newfirst} implies that $\sprod{\eta_{i_0}, V_{i_0}} > 1$. On this set, \eqref{eq:newsecond} and \eqref{eq:oldsecond} describes similar sets for $\sigma= \cos(\alpha)^{-1}$ and $t=\tan\alpha$, as can be seen in Figure \eqref{fig:NewVsOld}.

\begin{figure}
\centering
\includegraphics[scale=.8]{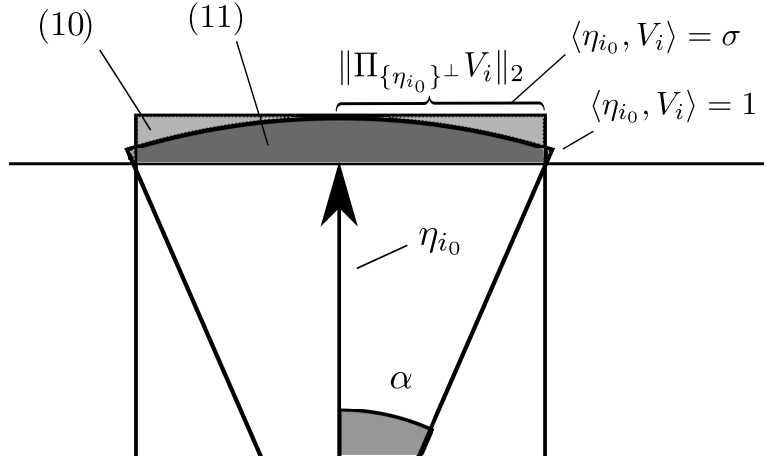}
\caption{A graphical comparison between the sets generated by \eqref{eq:newsecond} and \eqref{eq:oldsecond}, respectively, on the set $\sprod{\eta_{i_0},V_{i_0}}>1$. \label{fig:NewVsOld}}
\end{figure}

The theory in the article \cite{Flinth2016Soft} guarantees that the angle between the $i_0$:th row of a solution to the $\ell_{12}$-minimization is smaller than $2\alpha$. Theorem $\ref{th:soft}$ instead ensures the existence of a dictionary element of the form $\eta e_j^*$ with $\abs{\sprod{\eta_{i_0}e_{i_0}^*, \eta e_{i_0}^*}}  = \abs{\sprod{\eta_{i_0}, \eta}} \geq \tfrac{\sigma}{t}$. This is, \emph{after identification of antipolar vectors}, more or less an equivalent condition. 

Note that the difference between appearance of $\eta e_{i_0}^*$ and $- \eta e_{i_0}^*$ in the atomic decomposition of $X^*$ is subtle: By adjusting the sign of the corresponding coefficient in the atomic decomposition, we see that there will \emph{exist} a solution of $(\calP_D)$ with $(\eta, i_0) \in \supp \mu$ so that $\sprod{\eta, \eta_{i_0}} \geq 0$.

\subsection{Component Separation from Low-Dimensional Sketches} \label{sec:sep}
 An intriguing application of classical compressed sensing is that of \emph{geometric separation}: Using $\ell_1$-minimization, one can, for instance, separate dendrites from cell nuclei in images of neurons, or extract bright stars from an image of a galaxy.
The idea behind this technique can be described as follows. The ground truth signal $x_0$ is assumed to consist of two components $x_0^1$ and $x_0^2$.  Each component is assumed to possess a sparse decomposition in a distinct system -- say $x_0^1 = \Psi c^1$ and $x_0^2=\Theta c^2$, with $\Psi$ and $\Theta$ matrices and $c^i$ sparse. Although it is possible to consider more general settings \cite{KutyniokLim2010} and the problem can be modelled most faithfully in an infinite dimensions \cite{donoho2013microlocal}, we will here for simplicity assume that $\Psi$ and $\Theta$ are orthonormal systems in a finite dimensional Hilbert space. In order for the separation to work, $\Psi$ and $\Theta$ need to be morphologically different -- one could for instance consider $\Psi$ being an orthonormal system of wavelets, or spikes, and $\Theta$ to be the Fourier system. A mathematical way to measure the dissimilarity between the system is the \emph{mutual coherence}
\begin{align*}
	\kappa(\Psi, \Theta) =\max_{\psi \in \Psi, \theta \in \Theta} \abs{\sprod{\psi,\theta}}.
\end{align*}
A low coherence is interpreted as a high dissimilarity between the systems. \newline

Now let us assume that we only have access to a low-dimensional sketch $b=Ax_0 \in \C^m$ of the signal $x_0 \in \C^n$. Concerning the fact that the vector $[c^1, c^2]$ is sparse, it should be possible to recover the components $c^i$ using the following program:
\begin{align}
	\min \norm{c^1}_1 + \norm{c^2}_1 \st A(\Psi c^1 + \Theta c^2) = b, \tag{$\calP_{\text{sep}}$}
\end{align}
which of course is equivalent to minimization of the atomic norm with respect to the dictionary $\Phi= \Psi \cup \Theta$. Using standard techniques of compressed sensing, it is not hard to derive that, assuming $A$ is drawn from a reasonable probability distribution and $m$ is larger than some threshold which is essentially proportional to the sparsities of the $c^i$, the program $(\calP_{\text{sep}})$ will recover the components $c^i$ with high probability. In the case that $A=\id$, the problem was solved already in one of the pioneering mathematical papers on sparse recovery \cite{donoho2001uncertainty}. Theorem~7.1 of said paper states that if $
	\frac{1}{2} \left(\kappa(\Psi, \theta)^{-1} + 1\right) \geq s$, then any $s$-sparse $[c^1,c^2]$ will be recovered by solving $(\calP_{\text{sep}})$. This bound was later improved in \cite{elad2002generalized}, where it is shown that actually
\begin{align}
	\kappa(\Psi, \theta)^{-1}\left( \sqrt{2}-1 + \frac{1}{2}\right) \geq s, \label{eq:classCond}
\end{align}
suffices.

The case that $A \neq \id$ was treated in \cite{rauhut2008compressed}. The authors of that paper assume that $A \in \R^{m,n}$ is random and obeys a certain concentration of measure property. They prove (Corollary 4.1 of that paper)  that provided $\kappa(\Psi, \theta)^{-1}(s - 1) \geq \tfrac{1}{16}$ and $m \geqsim s \log(n)$, $(\calP_{\text{sep}})$ will recover the correct $c$. NHote that the result also applies to more general dictionaries, not only concatenation of two orthonormal bases. \newline

Suppose now that one of the coefficients, say $c^1_{i_0}$, is considerably larger the others. Can we maybe derive a guarantee for the program $(\calP_{\text{sep}})$ detecting that this coefficient is non zero using less measurements than for recovering the components exactly? This would be relevant for applications: In the galaxy imaging application, it maybe is only of interest to detect the position of a very bright star, instead of also resolving the underlying galaxy. Using Theorem \ref{th:soft}, this is in fact possible.

\begin{prop} \label{prop:sep} Let $A$ be a measurement matrix whose rows are i.i.d. copies of a random vector $X$  with the following properties:
\begin{enumerate}[(i)]
	\item {\bf Isotropy:} $\erw{X X^*} = \id$.
	\item {\bf Incoherence to $\Psi$:} $\sup_{\psi \in \Psi} \abs{\sprod{X, \psi}} \leq M$ almost surely.
	\item {\bf Incoherence to $\Theta$:} $\sup_{\theta \in \Theta} \abs{\sprod{X, \theta}} \leq M$ almost surely.
\end{enumerate} 
 Suppose that $x_0 = \Psi c^1 + \Theta c^2$ with $\norm{c^1}_1 + \norm{c^2}_1 =1$, and let $i_0 \in \supp c^1$. If there exists a $\gamma>0$ with
 \begin{align}
 	\kappa \cdot \frac{4}{\abs{c_{i_0}^1}} \leq 1 -\gamma, \label{eq:CoherenceCond}
 \end{align}
 where $\kappa= \kappa(\Psi, \Theta)$, the following holds: If $m=p\cdot r$ with
\begin{align*}
	r \geq \log\left( \frac{8}{\gamma}\right) \text{ and }  p \geqsim \left(C_\gamma^2\frac{M^2}{\kappa^2} + \frac{C_\gamma (M^2+\kappa)}{\kappa} \right) \log\left(\frac{nr}{\delta} \right),
\end{align*}
where $C_\gamma= \tfrac{3}{\gamma}\left(1-\tfrac{2\gamma}{3}\right)$, then with probability larger than $1-\delta$, any solution $(c_*^1, c_*^2)$ of $(\calP_{\text{sep}})$ will have the property $i_0 \in \supp c_*^1$.
\end{prop}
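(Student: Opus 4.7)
The plan is to apply Theorem~\ref{th:soft} to the dictionary $\Phi = \Psi \cup \Theta$ split into two sub-dictionaries $\Phi_1 = \Psi$ and $\Phi_2 = \Theta$. We identify $v_0$ with the atomic measure $\mu_0 = \sum_i c^1_i \delta_{\psi_i} + \sum_j c^2_j \delta_{\theta_j}$ of total variation $1$, and take $x_0 = \psi_{i_0} \in \Phi_1$ as the peak to be detected. Orthonormality of $\Psi$ implies that the only $\psi \in \Psi$ satisfying $|\sprod{\psi, \psi_{i_0}}| > \kappa$ is $\psi_{i_0}$ itself; hence, if we succeed in building a soft certificate $\nu \in \ran A^*$ with parameters $(\sigma, t)$ satisfying $t/\sigma > \kappa$, Theorem~\ref{th:soft} forces $\psi_{i_0}$ into the support of any atomic decomposition of the minimizer, giving $i_0 \in \supp c^1_*$ as desired.

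As an ideal (but generally non-admissible) candidate I take $\nu^{\mathrm{id}} = \alpha\,\sgn(\overline{c^1_{i_0}})\,\psi_{i_0}$ with $\alpha := 2/|c^1_{i_0}|$. Orthonormality of $\Psi$ and the coherence bound $|\sprod{\psi_{i_0}, \theta_j}| \leq \kappa$ give $\sprod{\psi_i, \nu^{\mathrm{id}}} = 0$ for $i \neq i_0$ and $|\sprod{\theta_j, \nu^{\mathrm{id}}}| \leq \alpha\kappa$, and hypothesis~\eqref{eq:CoherenceCond} reads $\alpha\kappa \leq (1-\gamma)/2$. A direct computation yields
\begin{align*}
	\re\int_I \sprod{\varphi_x, \nu^{\mathrm{id}}}\,d\mu_0 \geq \alpha|c^1_{i_0}| - \alpha\kappa\norm{c^2}_1 \geq \tfrac{3+\gamma}{2} > 1,
\end{align*}
while \eqref{eq:otherSubs}--\eqref{eq:orthCompSameSub} hold with $\sigma = \alpha$ and $t = 1$, so $t/\sigma = |c^1_{i_0}|/2 \geq 2\kappa/(1-\gamma) > \kappa$. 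Each of the four conditions thus holds with slack of order $\gamma$, so any $\nu \in \ran A^*$ with $\norm{\nu - \nu^{\mathrm{id}}}_{\Phi,\infty} := \sup_{\varphi \in \Phi}|\sprod{\varphi, \nu - \nu^{\mathrm{id}}}|$ bounded by a small multiple of $\gamma$ remains a valid soft certificate.

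Since $\nu^{\mathrm{id}}$ generically lies outside $\ran A^*$, I produce such a $\nu$ via the classical golfing scheme. Split $A = [A_1;\ldots;A_r]$ into $r$ mutually independent batches of $p$ rows each, set $\nu_0 = 0$, $w_0 = \nu^{\mathrm{id}}$, and iterate
\begin{align*}
	\nu_\ell = \nu_{\ell-1} + \tfrac{1}{p} A_\ell^* A_\ell w_{\ell-1}, \qquad w_\ell = (I - \tfrac{1}{p} A_\ell^* A_\ell) w_{\ell-1},
\end{align*}
so that $\nu := \nu_r = \nu^{\mathrm{id}} - w_r$ lies in $\ran A^*$. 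For each fixed $\varphi \in \Phi$ and each step $\ell$, $\sprod{\varphi, w_\ell}$ is an average of $p$ independent centered random variables (by isotropy hypothesis~(i)), whose variance is controlled via $M^2\norm{w_{\ell-1}}_2^2/p$ and whose peak magnitude is controlled through the incoherence hypotheses~(ii)--(iii). Bernstein's inequality, combined with a union bound over the $2n$ atoms and the $r$ iterations, then yields the geometric contraction $\norm{w_\ell}_{\Phi,\infty} \leq \tfrac{1}{2}\norm{w_{\ell-1}}_{\Phi,\infty}$ simultaneously for all $\ell$ with probability at least $1 - \delta$, provided $p$ satisfies the bound stated in the proposition. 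After $r \geq \log_2(8/\gamma)$ iterations one has $\norm{w_r}_{\Phi,\infty} \leq c\gamma$, so $\nu$ is a valid soft certificate and Theorem~\ref{th:soft} closes the argument.

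The main technical obstacle is the Bernstein step. The almost-sure summand bound involves a factor $|\sprod{X_k, w_{\ell-1}}|$ that is not controlled by $\norm{w_{\ell-1}}_{\Phi,\infty}$ alone, so one must simultaneously propagate a bound on $\norm{w_{\ell-1}}_2$ (which, by isotropy, contracts at a comparable geometric rate) and carefully couple the two norms along the iteration. This coupling is the source of both the quadratic $M^2/\kappa^2$ (variance-driven) and additive $M^2/\kappa$ (peak-driven) terms in the lower bound on $p$ appearing in the proposition.
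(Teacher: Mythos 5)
Your top-level plan---construct an ideal one-dimensional certificate $\nu^{\mathrm{id}}$ proportional to $\psi_{i_0}$, then golf it into $\ran A^*$ while controlling the residual uniformly in the inner products $\sprod{\varphi,\cdot}$ over $\varphi\in\Phi$---is also the paper's approach, and your slack accounting for the four soft-certificate conditions is essentially correct (the paper pins the amplitude $\hatsigma$ down via the self-consistency relation~\eqref{eq:HatSigma} rather than your simpler $\alpha=2/|c^1_{i_0}|$, but that is a matter of constants). The gap lies in the golfing iteration itself, and you flag it without resolving it. Your update $w_\ell = (I - \tfrac{1}{p}A_\ell^* A_\ell)w_{\ell-1}$ does \emph{not} project the residual back onto $\spn(\psi_{i_0})$; already $w_1$ is generically fully supported, and the Bernstein summand bound at step $\ell\geq 2$ must control $|\sprod{X, w_{\ell-1}}|$ for a dense $w_{\ell-1}$. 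The claim that $\norm{w_{\ell}}_2$ ``contracts by isotropy'' is false: isotropy makes $\tfrac{1}{p}A_\ell^*A_\ell$ an unbiased estimator of $\id$, not a near-isometry. In the compressed regime $p\ll n$ the matrix $\tfrac{1}{p}A_\ell^*A_\ell$ has rank at most $p$, so $I-\tfrac{1}{p}A_\ell^*A_\ell$ fixes an $(n-p)$-dimensional subspace pointwise and $\norm{w_\ell}_2$ need not shrink at all. Once $\norm{w_{\ell-1}}_2$ is uncontrolled, the only available almost-sure bound is $|\sprod{X, w_{\ell-1}}|\leq \norm{X}_2\norm{w_{\ell-1}}_2 \leq \sqrt{n}M\norm{w_{\ell-1}}_2$, which ruins the sample complexity.

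The paper's iteration avoids this by constraining the residual to $\spn(\psi_{i_0})$ from the start: after reducing WLOG to $\Psi=(e_i)$, it iterates $\nu^j = \nu^{j-1} + \tfrac{1}{p}A_j^*A_j\,(\omega\hatsigma - \nu^{j-1}_{i_0})\,e_{i_0}$, so the tracked residual is the \emph{scalar} $\nu^j_{i_0}-\omega\hatsigma$. Its geometric decay (Step~1 of the paper's proof) follows from a Bernstein bound on $1-\tfrac{1}{p}\norm{A_je_{i_0}}_2^2$ alone, requiring only the incoherence bound $|\sprod{X,\psi_{i_0}}|\leq M$. The off-support coefficients $\sprod{\varphi,\nu^r}$, $\varphi\neq\psi_{i_0}$, are then bounded \emph{post hoc} in Step~2 by summing a geometrically weighted series of Bernstein bounds on $\tfrac{1}{p}\sprod{A_j^*A_je_{i_0},\varphi}$, each of whose summands involves only the two incoherence quantities $\sprod{X,\psi_{i_0}}$ and $\sprod{X,\varphi}$ and so has almost-sure bound of order $M^2+\kappa$ and variance of order $M^2$, independently of $n$. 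This one-dimensional projection is precisely what produces the stated $p\geqsim\left(C_\gamma^2 M^2/\kappa^2 + C_\gamma(M^2+\kappa)/\kappa\right)\log(nr/\delta)$. You should rewrite your iteration so that only the $\psi_{i_0}$-coordinate of the residual is propagated.
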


The proof, which uses the golfing scheme \cite{gross2011golfing} will inevitably be quite technical and is therefore postponed to  Section \ref{sec:ProofSep}. The main trick is to utilize that if we know that a member $\vphi_k$ of the dictionary $\Phi= \Psi \cup \Theta$ satisfies $\abs{\sprod{\vphi_k, \psi_{i_0}}} > \kappa(\Psi, \Theta)$, then necessarily $\vphi_k = \psi_{i_0}$. \newline

Before ending this subsection, let us make a few remarks concerning Proposition \ref{prop:sep}.
\begin{rem}
\begin{itemize}
	\item The parameter $M$ is larger than $1$. This follows from
	\begin{align*}
		n &= \tr(\id) = \erw{\tr(XX^*)} = \erw{\norm{X}_2^2} \leq n M^2,
	\end{align*}
where we used $\norm{X}_2^2 = \sum_{\psi \in \Psi} \abs{\sprod{X, \psi}}^2 = \sum_{\theta \in \Theta} \abs{\sprod{X, \theta}}^2$, due to the $ONB$ property. 

\item A distribution for which $M$ almost achieves its optimal value is uniform random sampling of a system $(\sqrt{n} \rho_k)_{k=1}^n$, where $R=(\rho_k)_{k=1}^n$ is an $ONB$ with $\kappa(R, \Psi), \kappa(R, \theta) \sim \sqrt{\frac{\log(n)}{n}}$. Such a system can be constructed by drawing an orthogonal matrix at random. In this case, $M$ can be bounded by $\log(n)$ with high probability.

	\item Note that \eqref{eq:CoherenceCond} practically has the same flavour as \eqref{eq:classCond} if we identify $s$ with $\abs{c_{i_0}^1}^{-1}$. If $c$ has $s$ equally large components, we obtain $s=\abs{c_{i_0}^1}^{-1}$. If however one component is unproportionally large, we will have $\abs{c_{i_0}^1}^{-1} \ll s$. Hence, for such components, the statement of Proposition \ref{prop:sep} is stronger than what would be implied by only considering the sparsity of $c$ and \eqref{eq:classCond}. 

	To be concrete, let us consider the case that $n=1000$ and that $c^1, c^2$ have the following form:
	\begin{align*}
		c^1_0 =\frac{1}{2}, \quad c^1_i = \frac{1}{4}\cdot\frac{1}{999}, \quad i \geq 2 \quad, c^2_i = \frac{1}{4}\cdot\frac{1}{1000}, \quad i =1, \dots 100.
\end{align*}	
	$[c^1,c^2]$ is in this case $2000$-sparse, and the above mentioned results from the literature will only provide vacuous statements. In fact, the signal is not even close to being approximately sparse. We have $$\inf_{w \text{ $s$-sparse}} \norm{[c^1,c^2]-w}_1 = \frac{1}{2}-\frac{\min(s,1000)-1}{999}\cdot\frac{1}{4} - \frac{\max(s-1000,0)}{1000}\cdot\frac{1}{4}, $$ hence, it only drops very slowly with $s$. Hence, also theorems applying to such vectors will only yield weak statements.
	For our results however, we only need to note that $(c_{i_0}^1)^{-1} =2$. Hence, the soft recovery of the index $i_0$ will succeed even for dictionaries with coherence bounded by a constant, and $p \geqsim \kappa^{-2}\log(n)$!

 \item As for the number of measurements, the result is presumably not optimal. Considering the previous remark, we can interpret $\kappa^{-1}$ as the \emph{maximal sparsity for which $(\calP_{\text{sep}})$ will work}. Considering that  $M$ is larger than $1$, we see that $\kappa^{-2}$ is the limiting factor , i.e. the 'sparsity squared'. A linear dependence would be more desirable. 
\end{itemize}
\end{rem}

\subsection{Nuclear Norm Minimization} \label{sec:Nuc}
One prominent example of an atomic norm minimization problem is \emph{nuclear norm minimization} for recovery of low-rank matrices:
\begin{align*}
	\min_{X \in \R^{n,k}} \norm{X}_* \st A(X) =A(X_0). \tag{$\cal{P}_*$}
\end{align*} The nuclear norm $\norm{X}_*$  of a matrix $X \in \R^{n,k}$ is thereby defined as the sum of its singular values. This problem has been thoroughly discussed in the literature (see for instance \cite{gross2011golfing, recht2010guaranteed}). 

The nuclear norm is the atomic norm with respect to the dictionary introduced in Example \ref{ex:E}.2 for the case $U= \R^n$ and $V= \R^k$ (a rigorous proof of this can again be found in Section \ref{sec:NucProof}). The atomic decomposition is furthermore given by the $SVD$:
\begin{align*}
	X = \sum_{i=1}^r \sigma_i v_i \otimes u_i,
\end{align*}
where $r = \rank X$. Since the dual norm of the nuclear norm is the spectral norm \cite[Prop 2.1]{recht2010guaranteed}, \eqref{eq:Ankare} - \eqref{eq:orthCompSameSub} take the form (\eqref{eq:otherSubs} is vacuous):
\begin{align}
	\sum_{i=1}^r \sigma_i \sprod{v_i, \nu u_i} \geq 1, \quad \sprod{v_{i_0}, \nu u_{i_0}} \leq \sigma \label{eq:SoftNuc1} \\ 
	 \norm{\nu - \sprod{v_{i_0}, \nu u_{i_0}} v_{i_0} \otimes u_{i_0}}_{2 \to 2} \leq 1-t. \label{eq:SoftNuc2}
\end{align}
We have of course assumed that $\norm{X_0}_*=1$. If there existes a $\nu$  with \eqref{eq:SoftNuc1}--\eqref{eq:SoftNuc2}, the main result implies that one of the singular pairs $(v^*, u^*)$ of the solution $X_*$ of $(\calP_*)$ obeys $$\sprod{v^* \otimes u^*,v_{i_0} \otimes u_{i_0}} = \left(1 - \tfrac{1}{2}\norm{u^* - u_{i_0}}_2^2\right) \left(1 - \tfrac{1}{2} \norm{v^*-v_{i_0}}_2^2 \right) \geq \frac{t}{\sigma}.$$ If $\tfrac{t}{\sigma}$ is close to $1$, this implies that  $\norm{v^*-v_{i_0}}_2$ and $\norm{u^* - v_{i-i_0}}$ have to be small (see Figure \ref{fig:DistVsRatio}). \newline

\begin{figure}
\centering
\includegraphics[scale=.20]{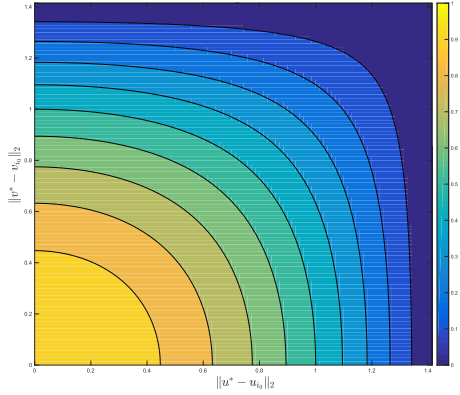}
\caption{Values of $\norm{u^*-u_{i_0}}_2$, $\norm{v^*-v_{i_0}}_2$ vs. the value of $t/\sigma$. \label{fig:DistVsRatio}}
\end{figure}

In order to conceptually prove that it can be easier to find a soft certificate than to find one satisfying the conditions for an exact dual certificate \cite[(2.9) and (2.11)]{recht2010guaranteed}:
\begin{align}
	\nu = \sum_{i=1}^r v_i \otimes u_i + W ,\quad W \vert_{\spn (u_i)_{i=1}^r}=0, \quad \ran W \perp \spn (v_i)_{i=1}^r, \label{eq:ExactNuc}
\end{align}
let us assume that $A: \R^{k,n} \to \R^m$ is a Gaussian map. Then $\ran A^*$ is a uniformally distributed $m$-dimensional subspace of $\R^{k,n}$, and we can apply the powerful machinery of \emph{statistical dimensions} \cite{AmelunxLotzMcCoyTropp2014}. The statistical dimension of a convex cone $C$ is defined as
\begin{align*}
	\delta(C) = \erw{ \norm{\Pi_C g}_2^2},
\end{align*}
where $g$ is a Gaussian and $\Pi_C$ denotes the metric projection onto $C$, i.e.,  $\argmin_{c \in C} \norm{g-c}_2$. Now, a bit streamlinedly put (for details, see Theorem $I$ in the mentioned paper), the main result of the mentioned paper says that if $C$ is a convex cone in a $d$-dimensional real vector space and $m > d-\delta(C)$, then $\ran A^*$ intersects $C$ non-trivially with high probability. 

Due to the linear structure of $\ran A^*$, there exists a $\nu$ satisfying \eqref{eq:SoftNuc1}--\eqref{eq:SoftNuc2}, or \eqref{eq:ExactNuc}, respectively, if and only if there exists a $\nu$ lying in the cones $C_{\text{soft}}$  and $C_{\text{exact}}$ generated by vectors satisfying those constraints. In Section \ref{sec:numest} we describe a very rudamentary procedure to estimate the statistical dimensions of those cones numerically. Figure \ref{fig:dims}  depict the results of applying this method to the nuclear norm setting. We chose to set $n=k=30$, and tested ranks of $X_0$ from $r=1$ to $4$. In the case that $r>1$, we chose $\sigma_1 = \frac{7}{10 }$ and the rest of the nonzero singular values equal to $\tfrac{3}{10(r-1)}$.

\begin{figure}
\centering
\includegraphics[scale=.125]{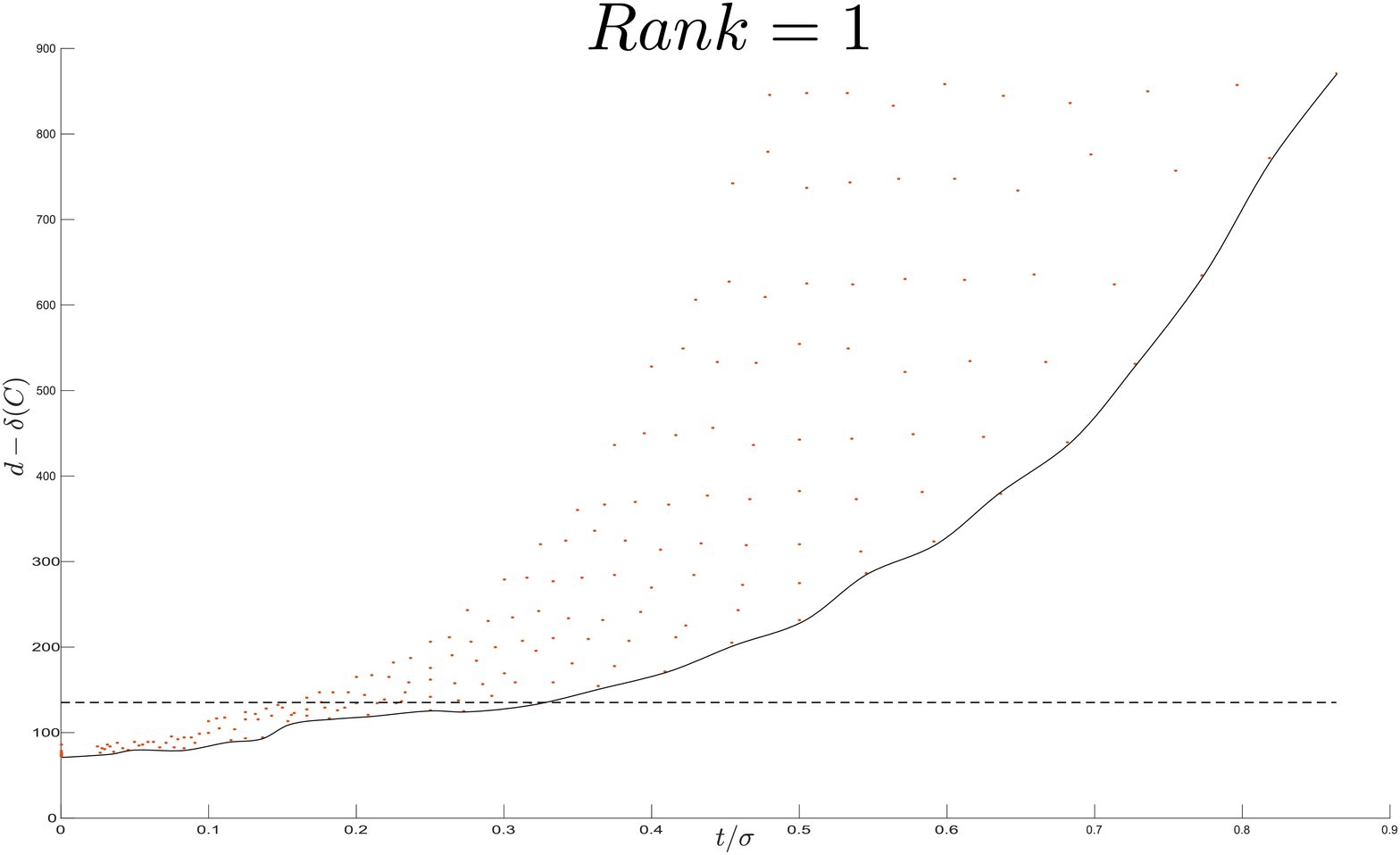} \includegraphics[scale=.125]{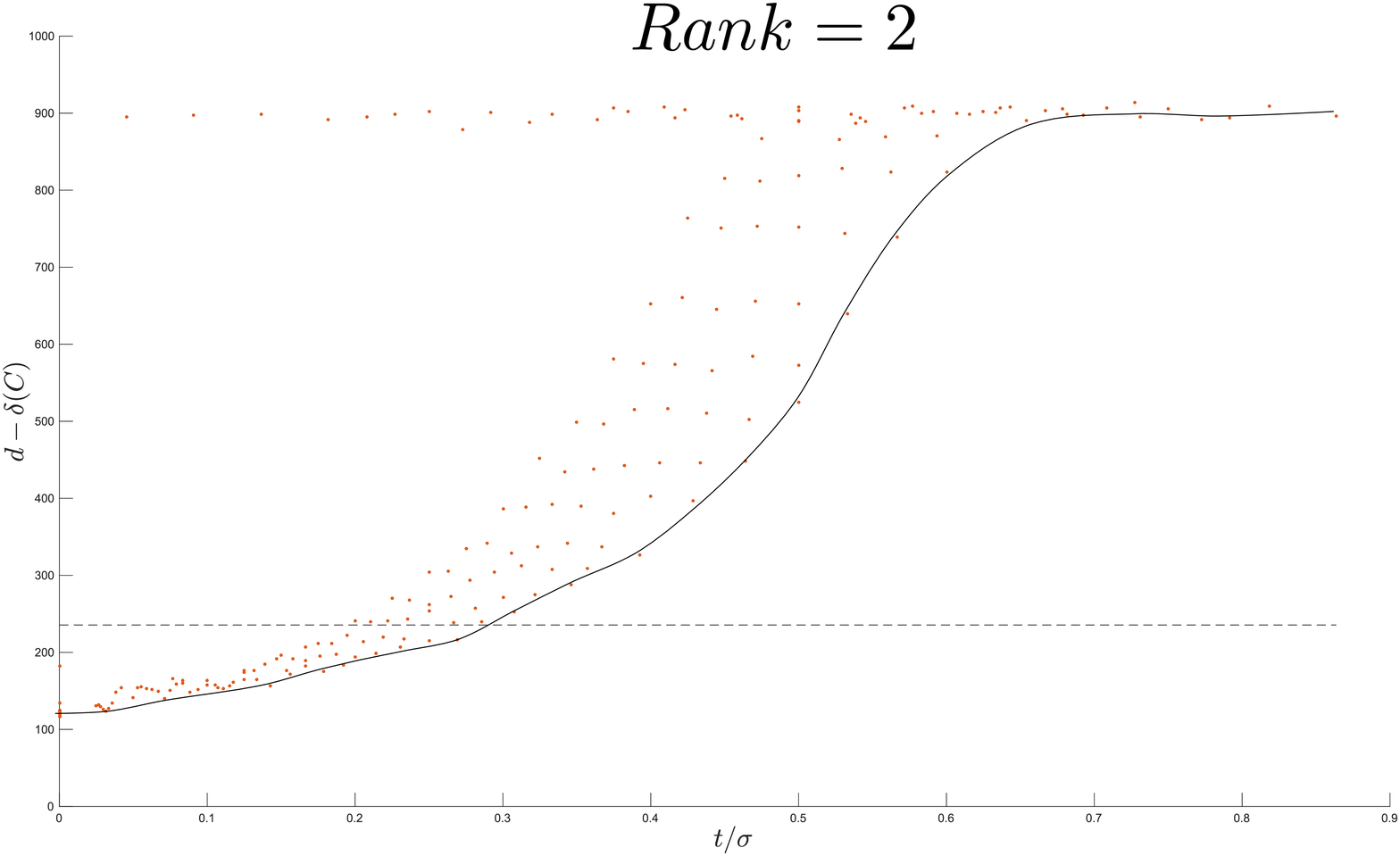} \\
\includegraphics[scale=.125]{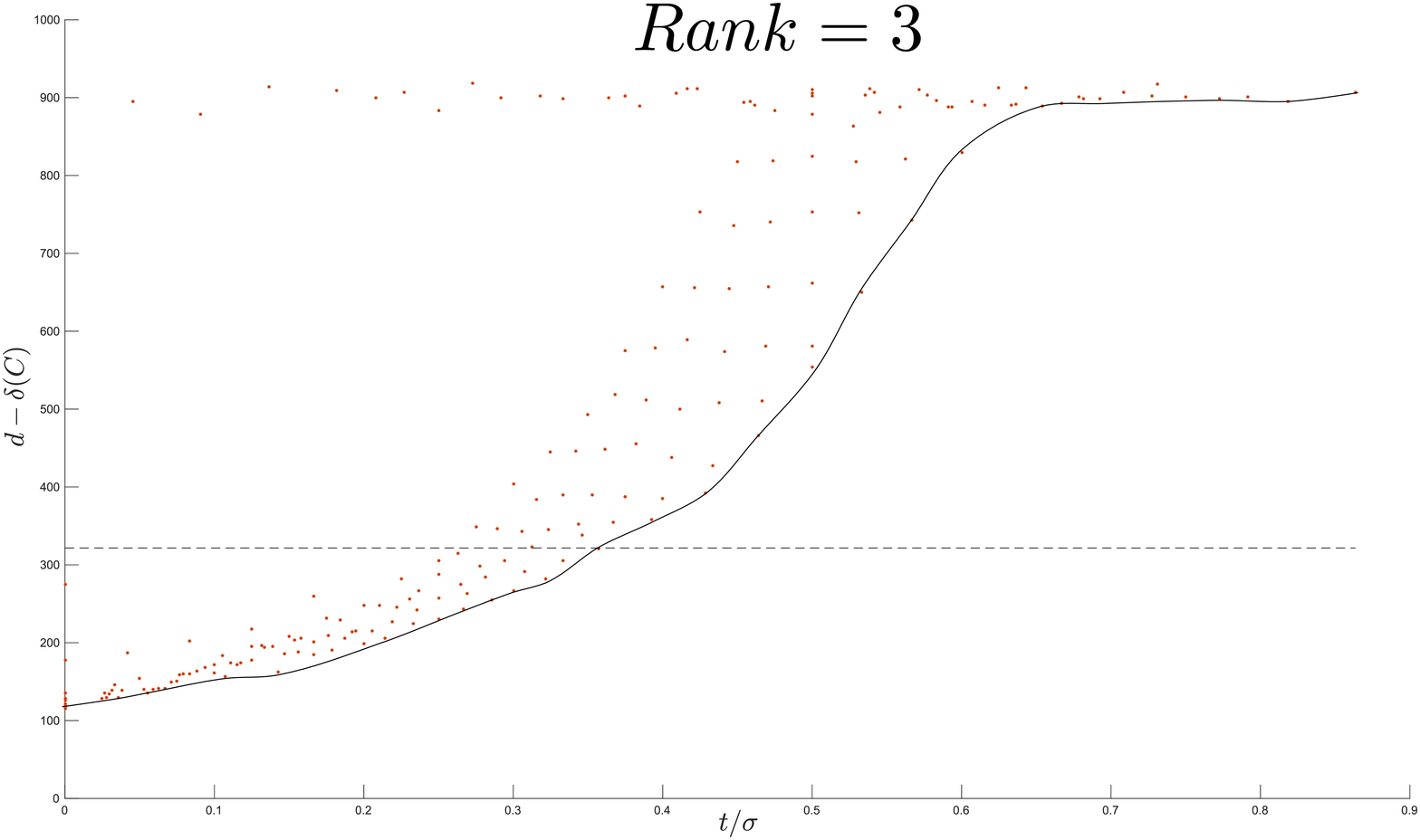} \includegraphics[scale=.125]{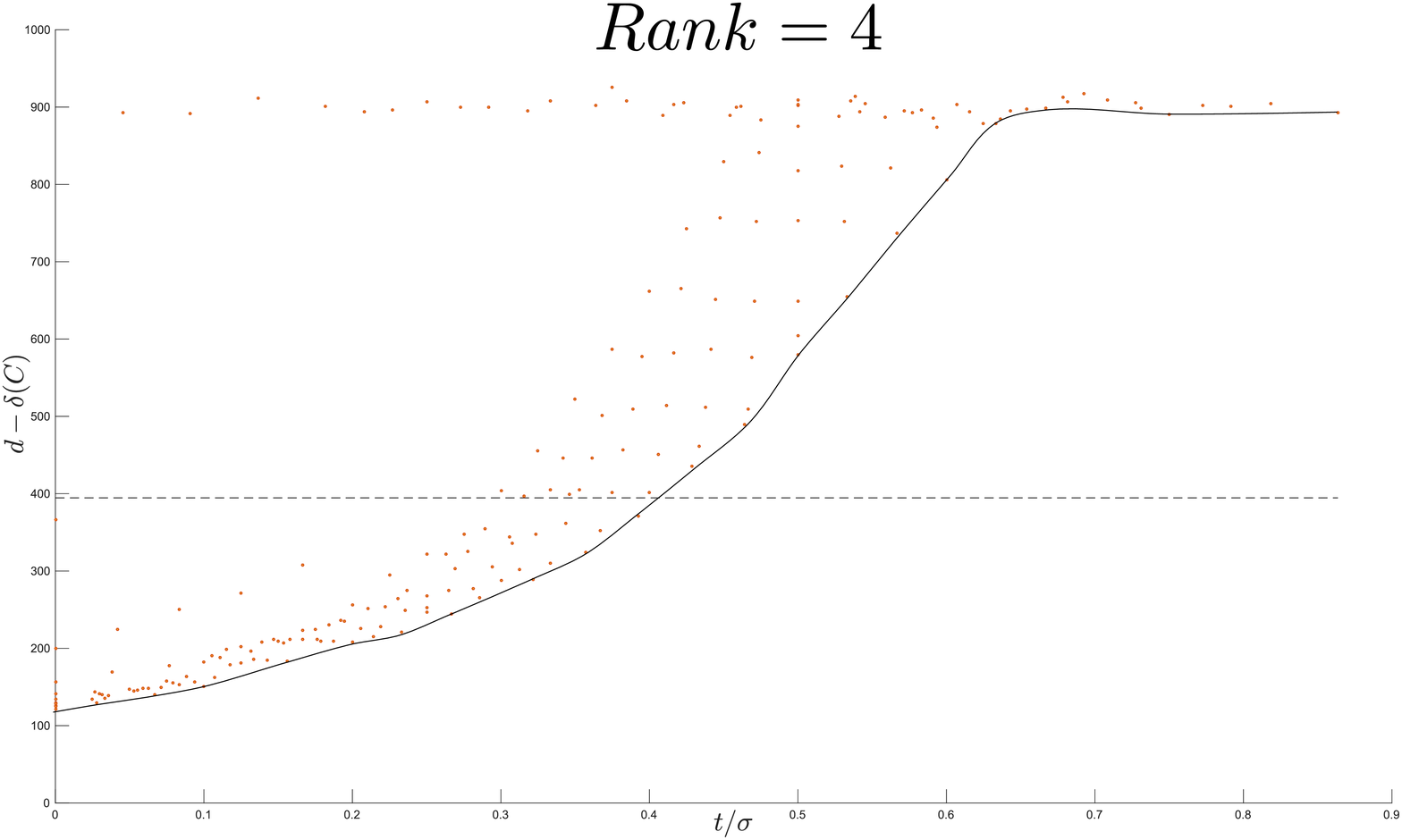}

\caption{Numerically calculated values for $d-\delta(C)$ for different ranks. The dashed line marks the value of $d-\delta(C)$ needed for exact recovery. The dots are the actual datapoints $(t/\sigma, d-\delta(C))$ that were considered, the solid lines are interpolations of those values approximating the optimal value of $d-\delta(C)$ for each value of $t/\sigma$. \label{fig:dims} }
\end{figure}

\subsection{Support Tracking in Superresolution} \label{sec:SuperResolution}
 
 A compressed sensing problem which is best formulated in infinite dimensions is the one of \emph{super-resolution}. Suppose that $\mu_0$ is a finite train of $\delta$-peaks: \begin{align} \label{eq:deltatrain}
 	\mu_0 = \sum_{x \in I_0} c_x^0 \delta_x,
 \end{align}
 where $I_0$ is some finite subset of $\R$. Note that the points $x \in I_0$ are not assumed to lay on some fixed, discretized grid. Now assume that we can only measure $\mu_0$ indirectly by some measurement process $A\mu_0$ and are asked to recover $\mu_0$. Can we do this by solving the following $TV$-minimization program
 \begin{align}
 	\min \norm{\mu}_{TV} \text{ subject to } A\mu =A \mu_0 \tag{$\calP_{TV}$} ?
 \end{align}

This program has already been considered in the literature several times.  The authors of \cite{candes2014towards} prove that if one assumes a separation condition $\inf_{x\neq x' \in I_0} \abs{x-x'} \geq 2/f_c$, then one can choose $A$ as measuring the $2 f_c +1$ first  of the Fourier coefficients. The technique of the paper is to construct an exact dual certificate guaranteeing that $\mu_0$ is the solution of $(\calP_{TV})$. The construction heavily relies on specific properties of trigonometric polynomials.

Another related work is \cite{tang2013compressed}. This revolves around reconstructing signals $x \in \C^n$ of the form
\begin{align*}
	x_j = \sum_{k=1}^s c_k e^{i2\pi f_k j}, j \in \set{0, n-1},
\end{align*}
where $f_k$ are unknown frequencies in $[0,1]$. This can be seen as a semi-discretized analog of the super-resolution problem. They prove that if the signs of the $c_k$ are chosen independently according to the uniform distribution on the unit sphere in $\C$, $x$ can be reconstructed from $m$ random samples $x_j$, where $m \geqsim s\log(s) \log(n)$. Similar results, for more general measurement processes, are obtained in \cite{heckel2016generalized}: The authors of the latter paper are able to reduce the number of measurements to $\geqsim s\log(n)$. Note that the last factor $\log(n)$ in both expression makes it hard to directly generalize this to the infinite dimensional case. \newline

 In this section, we will use the machinery we have developed to derive a soft recovery statement for $(\calP_{TV})$. For this, we first have to embed the $(\delta_x)_{x \in \R}$ into a Hilbert space. We will choose the space $\calE$ defined in Example \ref{ex:E}. The inclusion of a filter in the definition of $\calE$ is in fact crucial to derive statement about closeness between peaks: In $\calE$ space, as we will see, $\delta_x$ and $\delta_{x'}$ are close if $x$ and $x'$ are close. This is in essence different to measuring their distance in $TV$-norm, where $\norm{\delta_x - \delta_{x'}}=2$ for arbitrary values of $x \neq x'$.
 
  Let us first note that $(\calP_{TV})$ is the atomic norm minimization with respect to the dictionary $(\delta_x)_{x \in\R}$ in $\calE$. This is actually a tiny bit less trivial to realize than one might first think:.
 
 \begin{lem}
 	The atomic norm with respect to the dictionary $(\delta_x)_{x\in \R}$ in $\calE$, where $\calE$ is defined as in Example \ref{ex:E}.3,  is equal to the $TV$-norm.
 \end{lem}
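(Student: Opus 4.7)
The plan is to reduce the claim to the injectivity of the synthesis operator $D : \calM(\R) \to \calE$, $D\tau = \int_\R \delta_x \, d\tau(x)$, on the image of $\calM(\R)$ in $\calE$.

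First, I would verify that for $\mu \in \calM(\R)$ the canonical identification of $\mu$ with its image in $\calE$ agrees with $D\mu$, so that $\mu$ is itself a feasible point for the infimum defining $\norm{\mu}_\calA$. Young's inequality gives $\norm{\mu * \phi}_2 \leq \norm{\mu}_{TV} \norm{\phi}_2$; since $\phi \in \calS(\R) \subset L^2(\R)$, this shows $\mu \in \calE$. A short Fubini-type calculation, tested against an arbitrary $v \in \calE$, gives
\begin{align*}
	\sprod{v, D\mu}_\calE = \int_\R D^*v(x) \, d\mu(x) = \int_\R \sprod{v*\phi, \delta_x*\phi}_{L^2} \, d\mu(x) = \sprod{v*\phi, \mu*\phi}_{L^2} = \sprod{v, \mu}_\calE,
\end{align*}
so $D\mu = \mu$ in $\calE$. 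Plugging $\tau = \mu$ into the infimum immediately yields the easy inequality $\norm{\mu}_\calA \leq \norm{\mu}_{TV}$.

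Second, the converse inequality reduces to injectivity of $D$ on $\calM(\R)$. Suppose $D\tau = 0$ in $\calE$; then $\tau * \phi = 0$ in $L^2(\R)$, and hence $\widehat{\tau}\widehat{\phi} = 0$ almost everywhere. Since $\tau$ is a finite complex measure, $\widehat{\tau}$ is uniformly continuous and bounded. Implicit in the well-definedness of $\calE$ as a Hilbert space (rather than merely a semi-normed space) is that $\widehat{\phi}$ is nonzero almost everywhere, so the open set $\set{\widehat{\phi} \neq 0}$ is dense in $\R$; on this dense set $\widehat{\tau} = 0$, and by continuity $\widehat{\tau} \equiv 0$, giving $\tau = 0$. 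Consequently the only $\tau \in \calM(\R)$ with $D\tau = \mu$ is $\tau = \mu$, so the infimum is attained only at $\mu$ and $\norm{\mu}_\calA = \norm{\mu}_{TV}$.

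The hard part is the injectivity step: if we dropped the implicit non-degeneracy of $\phi$, then a nontrivial measure $\eta$ with $\eta * \phi = 0$ would provide a competing feasible point $\tau = \mu + \eta$ and could in principle strictly lower the infimum below $\norm{\mu}_{TV}$. The remaining steps are routine bookkeeping about how $\calM(\R)$ sits isometrically inside $\calE$ via $D$.
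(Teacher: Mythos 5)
Your core computation --- testing $D\mu$ against an arbitrary $w\in\calE$ to conclude $D\mu=\mu$ in $\calE$ --- is exactly the paper's proof, and the paper stops right there. What you add, and what the paper silently skips, is the injectivity of $D$ on $\calM(\R)$: without it, $D\mu=\mu$ only gives $\norm{D\mu}_\calA\le\norm{\mu}_{TV}$, since any nonzero $\eta\in\calM(\R)$ with $\eta*\phi=0$ would give a competing feasible point $\mu+\eta$ and could make the infimum strictly smaller (indeed, if $\hatphi$ vanished on an open interval one could take $\eta=\calF^{-1}\rho$ for a bump function $\rho$ supported there, so the lemma genuinely fails without some nondegeneracy of $\hatphi$). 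Your injectivity argument --- $\widehat{\tau}$ bounded and continuous, $\set{\hatphi\neq 0}$ dense, hence $\widehat{\tau}\equiv 0$ --- is correct, and the required assumption on $\hatphi$ is indeed used implicitly elsewhere in the paper: the Gaussian example remarks that $\hatphi$ nowhere vanishes, and the Parseval-frame construction in the appendix is explicitly restricted to nonvanishing $\hatphi$. The one imprecision in your framing is attributing the nondegeneracy to $\calE$ being a Hilbert space: the completion construction would still produce a Hilbert space after quotienting by the kernel of the seminorm; what nondegeneracy actually buys is that the natural map $\calM(\R)\to\calE$ is injective, which is the property you actually invoke and which the lemma needs. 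With that small reattribution, your proof is correct and is more complete than the paper's.
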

 \begin{proof}
  What is needed to be realized is that if $v= D\nu$ for a measure $\nu \in \calM(\R)$, then $v \in \calE$ is equal to $\nu$. To see this, let $w \in \calE$ be arbitrary. We then have
  \begin{align*}
  	\sprod{w, D\nu}_\calE = \int_\R \sprod{w,\delta_x}_\calE d\nu_x = \int_\R \int_\R \widehat{w}(t) \exp(-itx) \abs{\widehat{\phi}(t)}^2 dt d\nu(x) = \int_\R \widehat{w}(t) \overline{\widehat{\nu}(t)} \abs{\widehat{\phi}(t)}^2 dt = \sprod{w, \nu}_\calE,
  \end{align*}
  which shows that $D\nu=\nu$ as elements in $\calE$.
\end{proof}  
 As has already been mentioned, we will view the $\delta$-trains as signals in $\calE$ with sparse atomic decomposition in the dictionary $(\delta_x)_{x \in \calE}$. It will be convenient to define
  the \emph{auto-correlation function} $a$ through
 \begin{align*}
 	a = \overline{\phi}*\phi  = \calF^{-1}( \abs{\hatphi}^2),
\end{align*}  
where $\Phi$ is the filter defining $\calE$.

Let us begin by proving a small, but important lemma.
\begin{lem} \label{lem:FPhiF}
The map $\calF^{-1} \abs{\widehat{\phi}}^2 \calF : \calE \to \calC(\R)$ is well-defined and continuous, with norm smaller than $1$.
\end{lem}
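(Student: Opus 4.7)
The plan is to reduce the claim to a Cauchy--Schwarz estimate on the Fourier side, exploiting exactly the identity already worked out in Example~\ref{ex:E}.3.

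First I would observe the key factorization $|\widehat{\phi}|^2 \widehat{v} = \widehat{\phi}\cdot(\widehat{\phi}\,\widehat{v})$. Since $\phi \in \calS(\R)$, the factor $\widehat{\phi}$ belongs to $L^2(\R)$, and by the very definition of $\calE$, the factor $\widehat{\phi}\,\widehat{v}$ also lies in $L^2(\R)$ with $\norm{\widehat{\phi}\,\widehat{v}}_{L^2} = \norm{v}_\calE$. Hence their product is in $L^1(\R)$, and Cauchy--Schwarz yields
\begin{align*}
    \normm{|\widehat{\phi}|^2\widehat{v}}_{L^1} \leq \norm{\widehat{\phi}}_{L^2}\norm{\widehat{\phi}\,\widehat{v}}_{L^2} = \norm{\phi}_{L^2}\norm{v}_\calE.
\end{align*}
By Riemann--Lebesgue the inverse Fourier transform of an $L^1$ function is continuous (even in $\calC_0(\R)$), with sup norm bounded by the $L^1$ norm. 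So $\calF^{-1}|\widehat{\phi}|^2\calF$ is well-defined into $\calC(\R)$ and continuous, with operator norm at most $\norm{\phi}_{L^2}$.

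To get the bound of $1$, I would connect this directly to the test map already computed in Example~\ref{ex:E}.3. There it was shown that $\sprod{v,\delta_x}_\calE$ equals $\widehat{a}(x)$ with $a(t)=\widehat{v}(t)|\widehat{\phi}(t)|^2$; in other words pointwise one has $(\calF^{-1}|\widehat{\phi}|^2\calF v)(x) = \sprod{v,\delta_x}_\calE$. Cauchy--Schwarz in $\calE$ then gives
\begin{align*}
    \abss{(\calF^{-1}|\widehat{\phi}|^2\calF v)(x)} \leq \norm{v}_\calE \norm{\delta_x}_\calE,
\end{align*}
and a direct computation $\norm{\delta_x}_\calE^2 = \norm{\delta_x*\phi}_2^2 = \norm{\phi}_2^2$ recovers the same constant $\norm{\phi}_{L^2}$. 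The claim ``smaller than $1$'' then corresponds to the implicit normalization $\norm{\phi}_{L^2}\leq 1$ (equivalently $a(0)\leq 1$), under which the operator norm is bounded by $1$.

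The only mildly subtle step is making sure the pointwise identity between $\calF^{-1}(|\widehat{\phi}|^2 \widehat{v})$ and $x\mapsto \sprod{v,\delta_x}_\calE$ is justified as an identity of genuine functions, not merely almost-everywhere equality; but since the right-hand side is the Fourier transform of an $L^1$ function, the canonical continuous representative coincides with $x\mapsto\sprod{v,\delta_x}_\calE$, closing the argument.
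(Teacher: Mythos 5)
Your proof is correct and follows essentially the same route as the paper: factor $|\widehat{\phi}|^2\widehat{v}=\widehat{\phi}\cdot(\widehat{\phi}\,\widehat{v})$, invoke Cauchy--Schwarz to land in $L^1$, and conclude continuity and the sup-norm bound via Riemann--Lebesgue. Your explicit remark that the ``norm smaller than $1$'' claim really requires the normalization $\norm{\phi}_{L^2}\leq 1$ is apt --- the paper silently uses $\norm{\widehat{\phi}}_2\leq 1$ in the last inequality of its chain, and this normalization is only made explicit later in the Gaussian example.
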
 
\begin{proof}
	If $v \in \calE$, then $v * \phi$ and therefore also $\widehat{v} \hatphi$ is an element of $L^2(\R)$, as is in particular $\overline{\hatphi}$. Thus, as a product of two $L^2$-functions, $\widehat{v} \abs{\hatphi}^2$ is  in $L^1$, and therefore its inverse Fourier transform is continuous.
	
	As for the norm bound, we have
	\begin{align*}
		\norm{ \calF^{-1}( \widehat{v} \abs{\hatphi}^2)}_\infty \leq \norm{\widehat{v} \abs{\hatphi}^2}_1 \leq \norm{\widehat{v}\hatphi}_2 \norm{\hatphi}_2 \leq \norm{v}_\calE
	\end{align*}
\end{proof}
 
 Let us now specialize the soft recovery condition to this setting.
 
 \begin{cor} \label{cor:g} Let $\calE$ be the space defined in Example \ref{ex:E}.3,  $\mu_0$ as in \eqref{eq:deltatrain} and  $i_0$, $a$ and $A$ as above. If there exists a $g \in \ran \calF^{-1} \abs{\widehat{\phi}}^2 \calF A^*$ with
 \begin{align}
 	\sum_{x \in I_0} \re(c_x^0 g(x)) \geq 1, \quad \abs{g({x_0})} \leq \sigma, \quad \sup_{x \in \R} \abs{ g(x) - a({x-x_0}) g(x_0)} \leq 1-t, \label{eq:gCond}
 \end{align}
 then for every solution $\mu^*$ of $(\calP_{TV})$, there exists an $x \in \supp \mu^*$ with
 \begin{align*}
 	\abs{a({x-x_0})} \geq \frac{\sigma}{t}.
 \end{align*}
 
 \end{cor}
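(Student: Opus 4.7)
The plan is to apply Theorem~\ref{th:soft} directly, specialized to $\calH = \calE$ and the dictionary $(\delta_x)_{x \in \R}$. Because the dictionary has no natural subfamilies, we take $r = 1$, which makes condition \eqref{eq:otherSubs} vacuous. I would begin by rescaling $\phi$ so that $a(0) = \norm{\delta_x}_\calE^2 = 1$, making the dictionary normalized as the theorem requires.

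The key identification is between functions $g \in \ran \calF^{-1}|\hatphi|^2\calF A^*$ and soft certificates $\nu \in \ran A^*$. By the computation carried out in Example~\ref{ex:E}.3 together with Lemma~\ref{lem:FPhiF}, the test map of the dictionary $(\delta_x)_{x \in \R}$ is precisely the operator $\nu \mapsto \calF^{-1}|\hatphi|^2\calF\nu$. Hence, given a $g$ as hypothesized, I would pick $\lambda$ with $g = \calF^{-1}|\hatphi|^2\calF A^*\lambda$ and set $\nu := A^*\lambda \in \ran A^*$, so that $g(x) = \sprod{\nu,\delta_x}_\calE$ for every $x \in \R$. An entirely analogous computation yields the reproducing-type identity $\sprod{\delta_y,\delta_x}_\calE = a(x - y)$.

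With these two identities in hand, translating \eqref{eq:Ankare}--\eqref{eq:orthCompSameSub} into the three conditions in \eqref{eq:gCond} is routine. Condition \eqref{eq:Ankare} becomes $\re\sum_{x \in I_0} c_x^0 g(x) \geq 1$, and \eqref{eq:atPoint} becomes $|g(x_0)| \leq \sigma$. For \eqref{eq:orthCompSameSub}, the orthogonal projection formula gives $\Pi_{\sprod{\delta_{x_0}}^\perp}\nu = \nu - g(x_0)\delta_{x_0}$ (using $\norm{\delta_{x_0}}_\calE = 1$), so that $\sprod{\Pi_{\sprod{\delta_{x_0}}^\perp}\nu, \delta_x}_\calE = g(x) - g(x_0)\,a(x - x_0)$, and hence $\norm{\Pi_{\sprod{\delta_{x_0}}^\perp}\nu}_{\calA}^* = \sup_{x \in \R}|g(x) - g(x_0)a(x - x_0)| \leq 1 - t$. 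The conclusion \eqref{eq:scalProd} of Theorem~\ref{th:soft} then reads $|\sprod{\delta_x,\delta_{x_0}}_\calE| = |a(x - x_0)| \geq t/\sigma$ for some $x \in \supp \mu_*$, which is exactly the stated claim.

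There is no substantive obstacle here; the entire argument is a dictionary-translation exercise. The only point that requires a modicum of care is the bookkeeping of complex conjugates in the inner product on $\calE$. Since $|\hatphi|^2$ is real, however, the autocorrelation $a$ is Hermitian-symmetric in the sense needed for the projection formula, which makes all the expressions above unambiguous (up to, at worst, swapping $g(x)$ with $\overline{g(x)}$ in the very definition of $g$).
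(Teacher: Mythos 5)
Your proposal takes essentially the same route as the paper's own (very terse) proof: compute $\sprod{\nu,\delta_x}_\calE = \calF^{-1}(\widehat{\nu}\abs{\hatphi}^2)(x)$ to identify $g(x)=\sprod{\nu,\delta_x}_\calE$ with $\nu = A^*\lambda$, compute $\sprod{\delta_y,\delta_x}_\calE = a(x-y)$, and then invoke Theorem~\ref{th:soft} with the normalized dictionary $(\delta_x)_{x\in\R}$ and $r=1$. One small caveat: Theorem~\ref{th:soft} yields $\abs{a(x-x_0)}\geq t/\sigma$, which is what you derive, whereas the corollary as printed says $\sigma/t$ --- this is a typo in the paper (the later application in Section~\ref{sec:SuperResProof} also uses $t/\sigma$), so your claim that this is ``exactly the stated claim'' should be read as ``the corrected statement.''
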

 \begin{proof}
 	Since
 	\begin{align*}
 		\sprod{\nu, \delta_x}_\calE &= \sprod{\nu * \phi, \delta_x * \phi} = \int_{\R} \widehat{\nu}(t) \exp(-ixt) \abs{\widehat{\phi}}^2 dt = \calF^{-1} \left( \widehat{\nu}\abs{\hatphi}^2\right)(t), \\
 		\sprod{\delta_y, \delta_x}_\calE &= \int_{\R} \exp(iyt)\exp(-ixt) \abs{\widehat{\phi}}^2 dt = a_{x-y},
 	\end{align*}
  the claim follows immediately from Theorem \ref{th:soft}.
 \end{proof}

Let us compare this result with the exact recovery condition from e.g. \cite{candes2014towards}. In our notation, that condition asks for the function $g \in \ran A^*$ to obey
\begin{align}
	g(x) &= \tfrac{c_x^0}{\abs{c_x^0}}, x \in I_0, \label{eq:exactCond} \\
	  \text{ and } \sup_{x \in I} \abs{g(x)} &\leq 1. \nonumber
\end{align}
Hence, $g$ has to \emph{interpolate} the values $\tfrac{c_x^0}{\abs{c_x^0}}$ while still obeying $\sup_{x\in I} \abs{g(x)}\leq 1$ everywhere. Readers familiar with e.g. \cite{candes2014towards} know that this can be extremely tedious.

The condition described in Corollary \ref{cor:g} is not as rigid -- $g \in \ran \calF^{-1} \abs{\widehat{\phi}}^2 \calF A^*$ only have to \emph{approximate} \eqref{eq:exactCond}  and still obey $\sum_{x\in I_0} \re(c_x^0 g(x)) \geq 1$. This will typically make it necessary to have $\abs{g(x_0)}>1$ for $x\in I_0$, but this is possible by choosing $\sigma$ accordingly. The observant reader may have noticed that this will make $g$ automatically violate $\sup_{x \in \R} \abs{g(x)} \leq 1$. This is however not being asked by \ref{eq:gCond}! Figure \ref{fig:aSavesTheDay} makes it clear that it is reasonable to believe that $\sup_{x \in \R} \abs{g(x) - a(x-x_0) g(x_0)} \leq 1-t$ can be satisfied although $g(x_0) >1$. \newline

\begin{figure}
\centering
	\includegraphics[scale=.2]{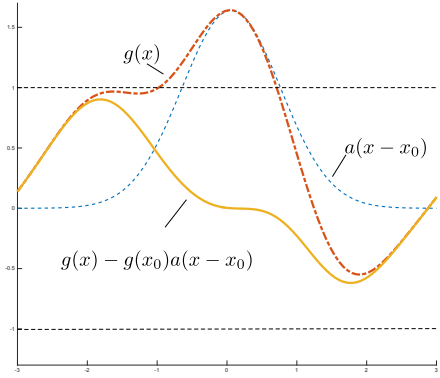}
	\caption{\label{fig:aSavesTheDay} The function $g$ violates $\sup_{x\in \R} \abs{g(x)} \leq 1$, but still obeys $\sup_{x\in\R} \abs{g(x) - g(x_0)a(x-x_0)} \leq 1-t$ for $t>0$.}
\end{figure}
 
In the following, we will describe a general strategy designing a measurement process $A : \calE \to \C^m$ for some $m \in \N$ such that the conditions \ref{eq:gCond} are satisfied. This measurement process of much more general nature than the measurement processes previously considered in the literature. The aim is hence not to propose a new measurement scheme, but rather to prove, on a conceptual level, the versatility of the soft recovery framework.
 
 Let $(f_n)_{n \in \N}$ be a \emph{Parseval frame} for $\calE$, meaning that 
 \begin{align*}
 	\sum_{n \in \N} \sprod{v, f_n} f_n = v
 \end{align*}
 for all $v \in E$. This means in particular that $\delta_{x_0} = \sum_{n \in \N} \sprod{\delta_{x_0}, f_n} f_n$. .
 
 Now let $M$ be a finite subset (say $\abs{M}=m$) of $\N$ and define the measurement operator $T_M$ as follows: 
 \begin{align*}
 	T_M : \calE \to \C^m, v \mapsto (\sprod{v,f_i})_{i \in M}, \quad \Rightarrow T^*_M : \C^m \to \calH , p \mapsto \sum_{i \in M} p_i f_i.
 \end{align*}
(This will be the map called $A$ in the main theorem).	In the following we will prove the existence of finite subsets $M \sse \N$ so that peaks $c_{x_0} \delta_{x_0}$ in $\mu_0$ can be  softly recovered from $T_M \mu_0$.
	
	The idea will be the following: If we could choose  $g$ in \ref{cor:g} as $(c_x^0)^{-1}\delta_x$, we would be done. In fact, already  the low-pass filtered version $\calF^{-1} \abs{\widehat{\phi}}^2 \calF \delta_x$ would suffice if we choose $\phi$, $\sigma$ and $t$ nicely. There is however no reason to believe that $\delta_x \in \ran T_M^*$.
	
	But since $\sum_{i \in \N} \sprod{\delta_{x},f_i} f_i = \delta_x$ for all $x$, we have $\ran T_M^* \ni T_M^*( ( \sprod{\delta_{x},f_i})_{i \in M} = \sum_{i \in M} \sprod{\delta_{x},f_i} f_i \approx \delta_x$, so we should be able to choose
	\begin{align*}
		g = \calF^{-1} \abs{\widehat{\phi}}^2 \calF \sum_{i \in M} \sprod{\delta_{x},f_i} f_i,
\end{align*}
and satisfy the conditions in \ref{cor:g} for good values of $\sigma$ and $t$. The crucial thing on which the success of this technique relies is exactly  how good the approximate equality $\sum_{i \in M} \sprod{\delta_{x},f_i} f_i \approx \delta_x$ is. Therefore, let us define
\begin{align}
	\epsilon(M, x_0):= \norm{ \sum_{i \in M} \sprod{\delta_{x_0}, f_i} f_i - \delta_{x_0}}_\calE
\end{align}

We will now prove a result about soft recovery properties of the program $(\calP_{TV})$. We will assume that our measure $\mu_0$  is of the form
\begin{align*}
	\mu_0 = \sum_{x \in I_0} c_{x}^0 \delta_{x},
\end{align*}
where $I_0$ is a subset of a (a priori known) compact interval $\calI$. The coefficients will for convenience assume to be normalized in the sense that $\norm{c^0}_1=1$. We will also assume a separability condition of the form
\begin{align*}
	\inf_{x, x' \in I_0} \abs{x-x'} \geq \Delta_{sep}.
\end{align*}

\begin{theo}
 \label{th:superResolution} Let $\lambda, L, \Delta, \Delta_{sep}>0$. Let further $a$ have the properties \begin{enumerate}
  \item $\abs{a(x)} \geq \lambda  \ \Rightarrow \ \abs{x} \leq \Delta$. \label{eq:Hump}
  \item $\abs{a(x)} \leq L$ for $\abs{x} \geq \Delta_{sep}$. \label{eq:decayA}
  \end{enumerate}
     Then for any compact interval $\calI$, there exists a measurement operator of the form $T_M$, $\abs{M} < \infty$, with the following property: If $\mu_0$ is as above and $x_0 \in I_0$ with $$   
  	\abs{c_{x_0}^0}(1+L)-L >\lambda
  	$$ then all solutions $\mu_*$ of $(\calP_{TV})$ obey
 \begin{align*}
 	 \exists x_* \in \supp \mu_* : \abs{x_*-x_0} \leq \Delta.
 \end{align*}
 If even $(\abs{c_{x_0}^0}(1+L)-L)(1-\gamma) \geq \lambda$ for some $\gamma >0$, we can furthermore secure that
 \begin{align*}
 	\abs{M} \leq \left\lceil\frac{2\abs{\calI}  \norm{\phi}_{1,2}}{\gamma} \right\rceil \cdot \sup_{x \in \calI} \abss{M\left(x, \frac{\gamma}{2\norm{\phi}_{1,2}}\right)}
 \end{align*}
 \end{theo}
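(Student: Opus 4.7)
The plan is to invoke Corollary \ref{cor:g} with an explicit $g$ built as a frame-truncation of the ``ideal'' certificate $y \mapsto s \overline{u_0}\, a(y-x_0)$, where $u_0 = c_{x_0}^0/\abs{c_{x_0}^0}$ and $s>0$ is a scaling. Concretely, given $M\subset \N$ finite, define $p\in\C^M$ by $p_i = s\overline{u_0}\,\overline{\sprod{\delta_{x_0},f_i}}$. Parseval gives $T_M^*p = s\overline{u_0}(\delta_{x_0}-R_{M,x_0})$ with $\norm{R_{M,x_0}}_\calE = \epsilon(M,x_0)$, and Lemma \ref{lem:FPhiF} then yields
\[
g = \calF^{-1}\abs{\hatphi}^2\calF T_M^*p = s\overline{u_0}\bigl(a(\cdot-x_0) - \eta\bigr), \qquad \norm{\eta}_\infty \leq \epsilon(M,x_0).
\]
I would normalize $\norm{\phi}_2 = 1$ so that $\abs{a}\leq a(0)=1$.

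Next I would verify the three conditions of Corollary~\ref{cor:g}, writing $\epsilon := \epsilon(M,x_0)$. The separation hypothesis and assumption (\ref{eq:decayA}) give $\abs{a(x-x_0)}\leq L$ for every other $x\in I_0$, and since $\re(c_{x_0}^0\overline{u_0})=\abs{c_{x_0}^0}$, a direct splitting of the sum yields $\sum_{x\in I_0}\re(c_x^0 g(x)) \geq s[\abs{c_{x_0}^0}(1+L)-L-\epsilon]$. The bounds $\abs{g(x_0)}\leq s(1+\epsilon)=:\sigma$ and $\sup_y \abs{g(y)-a(y-x_0)g(x_0)} = s\sup_y \abs{a(y-x_0)\eta(x_0)-\eta(y)}\leq 2s\epsilon =: 1-t$ are immediate. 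Choose $s = 1/(\abs{c_{x_0}^0}(1+L)-L-\epsilon)=:1/(D-\epsilon)$, which just saturates the anchor inequality. A short calculation gives $t/\sigma = (D-3\epsilon)/(1+\epsilon)$, so Corollary~\ref{cor:g} combined with hypothesis (\ref{eq:Hump}) yields the support point $x_*$ with $\abs{x_*-x_0}\leq \Delta$ as soon as $(D-3\epsilon)/(1+\epsilon)\geq \lambda$. When $D>\lambda$ this is possible; quantitatively, when $D(1-\gamma)\geq\lambda$, it suffices to have $\epsilon \leq D\gamma/(3+\lambda)$, which in particular follows from $\epsilon\lesssim \gamma$.

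The final and main obstacle is that $x_0$ is not known in advance, so $M$ must be chosen so that $\epsilon(M,x_0)$ is small \emph{uniformly} in $x_0\in\calI$. The crucial observation is that $x\mapsto \delta_x$ is Lipschitz into $\calE$: from $\delta_x*\phi = \phi(\cdot-x)$ one has $\norm{\delta_x-\delta_y}_\calE = \norm{\phi(\cdot-x)-\phi(\cdot-y)}_{L^2}\leq \norm{\phi}_{1,2}\abs{x-y}$. I would cover $\calI$ by $N := \lceil 2\abs{\calI}\norm{\phi}_{1,2}/\gamma\rceil$ centers $x_1,\dots,x_N$ of spacing $d\leq \gamma/(2\norm{\phi}_{1,2})$, pick for each a (minimal) $M_j = M\bigl(x_j, \gamma/(2\norm{\phi}_{1,2})\bigr)$ with $\epsilon(M_j,x_j)\leq \gamma/(2\norm{\phi}_{1,2})$ (such sets exist because the frame operator converges strongly to $I$), and set $M := \bigcup_j M_j$. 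Writing $S_M v := \sum_{i\in M}\sprod{v,f_i}f_i$, the sub-frame operator satisfies $0 \preceq S_M \preceq I$, hence $\norm{I-S_M}\leq 1$; a triangle inequality then gives
\[
\epsilon(M,x_0) \leq \norm{(I-S_M)(\delta_{x_0}-\delta_{x_j})}_\calE + \epsilon(M,x_j) \leq \norm{\phi}_{1,2}\,d + \epsilon(M_j,x_j)\leq \gamma,
\]
which is small enough to apply Step~3 with the given $\gamma$. Finally, $\abs{M}\leq N\cdot\max_j\abs{M_j}$ produces the claimed size bound. The main delicate point is precisely this covering/triangle argument: it requires both the Lipschitz estimate in $\calE$ and the stability bound $\norm{I-S_M}\leq 1$ inherited from the Parseval property, together with an honest accounting of how small $\epsilon$ must be to secure $t/\sigma\geq\lambda$.
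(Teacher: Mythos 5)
Your proposal follows exactly the paper's route: build $g$ as the filtered frame-truncation $\calF^{-1}\abs{\hatphi}^2\calF\,T_M^*$ applied to a rescaled $\overline{u_0}\delta_{x_0}$, bound the deviation from $s\overline{u_0}a(\cdot-x_0)$ by $s\,\epsilon(M,x_0)$ via Lemma~\ref{lem:FPhiF}, verify the three bounds of Corollary~\ref{cor:g}, and obtain the cardinality bound by covering $\calI$ — which is precisely the content of Lemma~\ref{lem:MLemma} that the paper cites (your re-derivation is line-for-line the paper's proof of that lemma, using $0\preceq S_M\preceq\id$ and the Lipschitz estimate $\norm{\delta_x-\delta_y}_\calE\leq\norm{\phi}_{1,2}\abs{x-y}$). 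Two small slips worth fixing but not affecting the argument: the frame coefficients should be $p_i=s\overline{u_0}\sprod{\delta_{x_0},f_i}$ (no extra conjugation) to give $T_M^*p=s\overline{u_0}S_M\delta_{x_0}$; and the sufficient threshold for $\epsilon$ should be $\epsilon\leq D\gamma/(3+D)$ rather than $D\gamma/(3+\lambda)$ (the inequality $3+\lambda\leq 3+D$ goes the wrong way), though your final conclusion $\epsilon\lesssim\gamma$ (indeed $\epsilon\leq D\gamma/4$, using $D\leq1$) still holds.
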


The proof of this theorem can be found in Section \ref{sec:SuperResProof}. \newline

Let us finally look at a concrete example for a filter to get an idea of which tradeoff between $\lambda$, $\Delta$ and $\norm{\phi}_{1,2}$ can be achieved.

\begin{example}
	Consider the Gaussian filter
	\begin{align*}
		\phi(x) = \sqrt{\Lambda}{\pi}e^{-x^2/2(\Lambda^2)}.
	\end{align*}
	Then, $\phi$ is $\ell_2$-normalized. We have $\hatphi(t)= (\Lambda \pi)^{-1/2} \exp( -\Lambda^2 t^2/2)$ and thus
	\begin{align*}
		\norm{\phi}_{1,2}^2 = \frac{1}{\Lambda \pi}\int_\R t^2e^{-\Lambda^2 t^2}dt = \frac{1}{2\sqrt{\pi}\Lambda^2 },
	\end{align*}
	i.e. $\norm{\phi}_{1,2} \sim \Lambda^{-1}$. This also means that $\hatphi$ nowhere vanishes.
	
	The associated auto-correlation function can in fact be calculated explicitly: We have
	\begin{align*}
		a(x) = \frac{\Lambda}{\pi}\int_\R e^{-(t^2-(x-t)^2)/(2\Lambda^2)}dt = \exp(- x^2/)(2\Lambda^2))\frac{1}{\pi}\int_\R e^{-t^2-xt}dt = \exp(- x^2/(2\Lambda^2)).
	\end{align*}
	When applying the stronger version of Theorem \ref{th:superResolution}, we see that  we necessarily need to choose $\lambda \leq (1-\gamma)\abs{c_{x_0}^0}$. Therefore, let us set $\lambda = \theta (1-\gamma) \abs{c_{x_0}^0}$ for some $\theta \in [0,1]$. This corresponds to $\Delta \geq \Lambda \sqrt{2 \log( \theta (1-\gamma) \abs{c_{x_0}^0})^{-1}}$. Furthermore, this choice of $\lambda$ prohibits values of $\Delta_{sep}$ not obeying
	\begin{align*}
		\abs{c_{x_0}^0}(1+\exp(-(\Delta_{sep})^2/2\Lambda^2)) - \exp(-(\Delta_{sep})^2/2\Lambda^2) >\theta \abs{c_{x_0}^0} \\
		\Leftrightarrow \Delta_{sep} < \Lambda\sqrt{2 \log \left( \frac{ 1-\abs{c_{x_0}^0}} {\abs{c_{x_0}^0}(1-\theta)}\right)}.
	\end{align*}
	The dependence $\Delta/\Lambda$-- $\Delta_{sep}/\Lambda$ for a few values of $\abs{c_{x_0}^0}$ and $\gamma=\tfrac{1}{2}$ are depicted in Figure \ref{fig:Deltas}. We see that  if we assume a minimal separation $\Delta_{sep}=3\tfrac{19}{20}\Lambda$, peaks of relative size $\tfrac{1}{10}$ are guaranteed to deviate less than $\Delta= 2\tfrac{1}{10} \Lambda$ in the solutions to $(\calP_{TV})$. \newline

	\begin{figure}
	\centering
	\includegraphics[scale=.55]{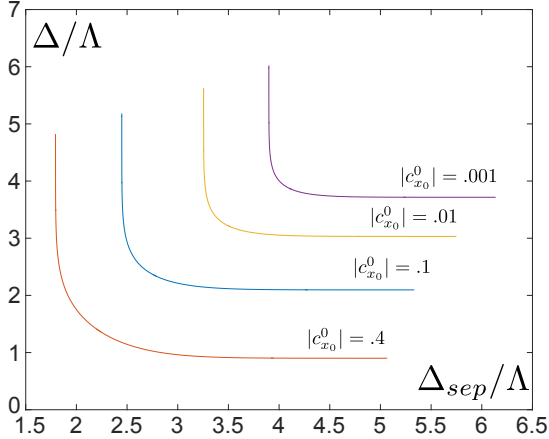}
		\caption{The relation $\Delta_{sep}/\Lambda$ -- $\Delta/\Lambda$ for a few values of $\abs{c_{x_0}^0}$. In the figure, $\gamma$ is fixed to be equal to $\tfrac{1}{2}$. \label{fig:Deltas}.}
	\end{figure}	
	
	As for the number of measurements (the size of the set $M$), we can say the following: $\phi$ is rapidly decaying, and thus in $B_{p,p}^\alpha$ for all $\alpha>0$, furthermore with $\norm{\phi}_{B_{p,p}}^s \asymp \Lambda^{-1}$. Using the frame we construct in Section \ref{sec:ParsevalFrame} of the Appendix (in particular invoking Corollary \ref{cor:FrameFullRate}), we have
	\begin{align*}
		\left\lceil\frac{2\abs{\calI}  \norm{\phi}_{1,2}}{\gamma} \right\rceil \cdot \sup_{x \in \calI} \abss{M\left(x, \frac{\gamma}{2\norm{\phi}_{1,2}}\right)} \leqsim \left\lceil \frac{\abs{\calI}  \Lambda^{-2+ \alpha^{-1}}}{\gamma^{2+\alpha^{-1}}}  \right\rceil,
		\end{align*}
		where the implicit constant only depends on $\alpha$. Hence, the Theorem \ref{th:superResolution} proves that the number of measurements needed to resolve a peak known to be lying in an interval $\calI$ will be proportional to length of that interval. Note also that the number of measurements grows as $\Lambda \to 0$. This is to be expected, since the quality of the soft recovery is enhanced for $\Lambda \to 0$.
\end{example}

\subsection*{Acknowledgement}

The author acknowledges support from the Deutsche Forschungsgemeinschaft (DFG) Grant KU 1446/18-1, and from the Berlin Mathematical School (BMS). 
He further wishes to thank Maximilian März, Martin Genzel and also Gitta Kutyniok for careful proof-reading and many important suggestions and remarks. These have significantly improved the quality of the paper. He also wishes to thank Ali Hashemi for valuable discussions regarding the numerical experiments in Section 4.3, as well as other topics.

\bibliographystyle{abbrv}
\bibliography{/homes/numerik/flinth/Documents/bibliographyCSandFriendsMASTER}

\section{Appendix} \label{sec:proofs}

Here, we provide some proofs left out in the main body of the text.

\subsection{The $\ell_{12}$-norm as an Atomic Norm} \label{sec:L12Proof} 
Here, we prove that the atomic norm with respect to the dictionary $\Phi^{12}$ is the $\ell_{12}$-norm.

	Let $D$ be the synthesis operator of $\Phi^{12}$. For a matrix $X \in \R^{k,d}$, let $\eta_i$ denote the normalized $i$:th column for $i$ with $X_i \neq 0$. Then for $V \in \R^{k,d}$ arbitrary
	\begin{align*}
		\sprod{V, D \left( \sum_{i: X_i \neq 0} \norm{X_i}_2 \delta_{i} \otimes \delta_{\eta_i} \right)}  = \sum_{i: X_i \neq 0} \int_{\sph^{k-1}} \sprod{V, \eta e_i^*} \norm{X_i}_2 d \delta_{\eta_i}(\eta) \delta_{k,i} = \sum_{ i \in [d]} \sprod{V_i, X_i} = \sprod{V,X}.
	\end{align*}
	I.e.  $X = D \left( \sum_{i: X_i \neq 0} \norm{X_i}_2 \delta_{i} \otimes \delta_{\eta_i} \right)$ and $\norm{X}_\calA \leq \normm{\sum_{i: X_i \neq 0} \norm{X_i}_2 \delta_{i} \otimes \delta_{\eta_i}}_{TV} = \norm{X}_{12}$. 
	
	On the other hand, we can write every measure $\mu$ on $[d] \times \sph$ as a sum $\sum_{i \in [d]} \mu_i \otimes \delta_{i}$ for some measures $\mu_i$ on $\sph$. In order for $D\mu=X$, we need
	\begin{align*}
		\sum_{i \in [d]} \int_{\sph^{k-1}} \sprod{V_i, \eta } d\mu_i(\eta) = \sprod{V,X}
	\end{align*}
	for $V \in \R^{k,d}$ arbitrary. This implies that $X_i = \int_{\sph^{k-1}} \eta d\mu_i$ for each $i$, from which we deduce $\norm{X_i}_2 \leq \int_{\sph^{k-1}} \norm{\eta}_2 d\abs{\mu}_i = \norm{\mu_i}_{TV}$, and hence
	\begin{align*}
		\norm{X}_{12} = \sum_{i \in [d]} \norm{X_i}_2  \leq \sum_{i \in [d]} \norm{\mu_i}_{TV} = \norm{\mu}_{TV}.
	\end{align*}
	Therefore, $\norm{X}_{\calA} \geq \norm{X}_{12}$, and the proof is finished.

\subsection{Proof of Proposition \ref{prop:sep}} \label{sec:ProofSep}
As was advertised, we will use the golfing scheme to prove Proposition \ref{prop:sep}. Let us make some preparations by arguing that it suffices to construct a $\nu$ in $\ran A^*$ with the following property: There exists an $\epsilon >0$ and a $\tau \in [\kappa (\hatsigma+ \epsilon), 1)$ such that
\begin{align}
	\abss{ \sprod{\nu, \psi_{i_0}} - \frac{\overline{c}_{i_0}^1}{\abs{c_{i_0}^1}} \widehat{\sigma}} &\leq \epsilon \label{eq:INollProx}  \\
	\widehat{\sigma} &=  \frac{ 1+ (1-\abs{c_{i_0}^1})(\tau - \kappa \hat{\sigma} ))}{\abs{c_{i_0}^1}}   \label{eq:HatSigma}\\
	\max_{\substack{\vphi \in \Phi\\ \vphi \neq \psi_{i_0}}} \abs{ \sprod{\nu, \vphi}} &\leq \tau - \kappa {\widehat{\sigma}}- \kappa \epsilon.  \label{eq:OffBound}	\\
	\tau &< 1- \kappa \hatsigma  \label{eq:tau}
\end{align}
To see this, we first convince ourselves that these conditions imply that $\nu$ satisfy \eqref{eq:Ankare}--\eqref{eq:orthCompSameSub} with $\sigma = \widehat{\sigma}+\epsilon$ and $t =1-\tau$. \eqref{eq:otherSubs} is in this case vacuous, and \eqref{eq:atPoint} is immediate from \ref{eq:INollProx}. To see that \eqref{eq:Ankare} is true, notice that \eqref{eq:INollProx} implies that $\re(\sprod{\nu, \psi_{i_0}c_i^0}) \geq \abs{c_{i_0}^0}(\widehat{\sigma} - \epsilon)$. This implies
\begin{align*}
	\re \left( \sum_{i=1}^n c_i^1 \sprod{\nu, \psi_i} + \sum_{i=1}^n c_i^2 \sprod{\nu, \theta_i}\right) &\geq \re( c_{i_0}^1 \sprod{\nu, \psi_{i_0}}) - \sum_{i \neq i_0} \abs{c_i^1 {\sprod{\nu, \psi_i}}}- \sum_{i=1}^n \abs{c_i^2 \sprod{\nu, \theta_i}} \\
	& \geq \abs{c_{i_0}^0}(\widehat{\sigma} - \epsilon) - (\norm{c^1}_1 - \abs{c_{i_0}^0} + \norm{c^2}_1)(\tau - \kappa \hatsigma) \\
	& \stackrel{\eqref{eq:HatSigma}}{=} 1 + (1-\abs{c_{i_0}^1}) (\tau - \kappa \hat{\sigma} ) - (1 - \abs{c_{i_0}^1})(\tau - \kappa \hatsigma) = 1.
\end{align*}
Finally, \eqref{eq:orthCompSameSub} holds since
\begin{align*}
	\sup_{ \vphi \in \Phi} \abs{\langle{\Pi_{\sprod{\psi_{i_0}}^\perp}\nu, \vphi} \rangle} = \sup_{ \vphi \in \Phi} \abs{\sprod{\nu, \vphi}- \sprod{\nu, \psi_{i_0}}\sprod{\psi_{i_0}, \vphi}} &\leq \begin{cases} 0 & \vphi= \psi_{i_0} \\ \abs{\sprod{\nu, \vphi}} + \kappa  \abs{\sprod{\nu, \psi_{i_0}}} & \vphi \neq \psi_{i_0} \end{cases} \\
	&\leq \tau - \kappa \hatsigma - \kappa \epsilon + \kappa \sigma = 1-t.
\end{align*}
Theorem \ref{th:soft} implies that there exists a $\vphi \in \Phi$ with 
\begin{align*}
	\abs{\sprod{\vphi, \psi_{i_0}}} \geq \frac{1-\tau}{\hatsigma} > \kappa,
\end{align*} and this together with the definition of $\kappa$ implies $\vphi = \psi_{i_0}$.

It now remains to construct a $\nu$ satisfying \eqref{eq:INollProx}--\eqref{eq:OffBound}. We can without loss of generality assume that $\Psi$ is the standard basis: By redefining $X \mapsto \Psi^*X$, $\Theta \mapsto \Psi^*\Theta$, we will consider the same minimization problems, not change the values of the parameters $\kappa$ and $M$, and still have isotropic measurements.

We now partition the $m$ rows of $A$ in to $r$ groups $\Lambda_i$ with $\Lambda_i = p$, define $A_i = \calP_{\Lambda_i}A$ and propose the following golfing iteration:
\begin{align*}
	\nu^0 &= 0 \\
	\nu^j &= \nu^{j-1} + \tfrac{1}{p} A_j^*A_j(\nu^{j-1}_{i_0} - \omega\hatsigma) e_{i_0}, \quad j = 1, \dots, r,
\end{align*}
where we introduced the notation $\omega := \tfrac{\overline{c}_{i_0}^1}{\abs{c_{i_0}^1}}$. The rest of the proof will consist of proving that $\nu:= \nu^r$ satisfies \eqref{eq:INollProx}--\eqref{eq:OffBound} with high probability, for a value of $\epsilon$ we will specify later. We will do this in three steps.  We will use the Bernstein inequality: If $Y_i$ are (complex) independent, centered random variables satisfying $\abs{Y_i} \leq R$ almost surely and $\sum_{i=1}^p \erw{\abs{Y_i}^2} \leq V$, then
\begin{align}
	\prb{\abss{\sum_{i=1}^r Y_i} \geq t} \leq 4 \exp\left(\frac{-t^2/4}{V + + Rt/\sqrt{2}}\right). \label{eq:Bernstein}
\end{align} 
Note the slightly worse constants compared to result most often referred to as the Bernstein inequality. This is due to the complexity of the variables: one may deduce the above result from the standard real result by observing
\begin{align*}
	\prb{\abs{Z} \geq t} \leq \prb{ \abs{\re{Z}} \geq \tfrac{t}{\sqrt{2}} \wedge \abs{\im{Z}} \geq \tfrac{t}{\sqrt{2}}}.
\end{align*} 

{\bf Step 1:} We have for each $j=0, \dots r$
\begin{align}
	\abs{  \nu^j_{i_0} - \omega \hatsigma} \leq 2^{-j} \hatsigma \label{eq:decay}.
\end{align}
We will prove this by induction. $j=0$  is trivial, so we only have to carry out the induction step. We have
\begin{align*}
	\abs{\nu^j_{i_0} - \omega \hatsigma} = \abs{1 - \tfrac{1}{p} (A_je_{i_0})^*A_je_{i_0}} \cdot \abs{\nu^{j-1}_{i_0} - \omega \hatsigma},
\end{align*}
so if we can prove $\abs{1 - \tfrac{1}{p} (A_je_{i_0})^*A_je_{i_0}} \leq \tfrac{1}{2}$ with high probability, we are done (note that due to the partitioning of the measurements, $A_j$ is independent of $\nu^{j-1}$, whence we can regard $\nu^{j-1}$ as a constant in the induction step.) We have
\begin{align*}
	 1 - \tfrac{1}{p} (A_je_{i_0})^*A_je_{i_0} =  \sum_{\ell \in \Lambda_j} \tfrac{1}{p}(1- \abs{X^\ell_{i_0}}^2).
\end{align*}
This is a sum of independent (real) random variables, which are centered due to the isotropy of $X$. Let us bound the $R$ and $V$ parameters in the Bernstein inequality: Due to the incoherence properties of $X$, we almost surely have $\tfrac{1}{p}\abs{1 - \abs{X^\ell_{i_0}}^2} \leq \tfrac{1}{p}(1+ M^2)$ and
\begin{align*}
	\sum_{\ell \in \Lambda_j}\erw{ \tfrac{1}{p^2}\abs{1 - \abs{X^\ell_{i_0}}^2}^2} = \tfrac{1}{p}\erw{1 - 2\abs{X^\ell_{i_0}} + \abs{X_{i_0}^\ell}^2} \leq \tfrac{1}{p}(1 + M^2\erw{ e_{i_0}^* (X^\ell)^*X^\ell e_{i_0}}) = \tfrac{1}{p}(1 + M^2).
\end{align*}
In the final step, we used the isotropy assumption. Bernstein now implies
\begin{align*}
	\prb{ \sum_{\ell \in \Lambda_j} \tfrac{1}{p}(1- \abs{X^\ell_{i_0}}^2) \geq \tfrac{1}{2}} \leq 4 \exp\left(\frac{-p/8}{(1+M^2)(1 + 1/(2\sqrt{2}))}\right),
\end{align*}
which is smaller than $\tfrac{\delta}{r}$ provided $m \geqsim (1+M^2) \log\left(\tfrac{r}{\delta}\right)$. A union bound over $j$ now proves that \eqref{eq:decay} is true with a failure probability smaller than $ \frac{\delta}{2}$. \newline

{\bf Step 2:} Denote $\tau^* = \tau - \kappa \hatsigma - \kappa \epsilon$. We have for $\vphi \in \Phi$ arbitrary
\begin{align*}
	\abs{\sprod{\nu^{r}, \vphi}} = \sum_{j \leq r} \tfrac{1}{p}\abs{\sprod{A_j^*A_j e_{i_0}, \vphi} } \abs{\nu_{i_0}^{j-1}- \omega \hatsigma} \leq \sum_{j \leq r} \tfrac{1}{p}\abs{\sprod{A_j^*A_j e_{i_0}, \vphi} } 2^{-j}\widehat{\sigma},
\end{align*}
where we applied \eqref{eq:INollProx}. Hence, if we prove
\begin{align*}
	\frac{1}{p}\abs{\sprod{A_j^*A_j e_{i_0}, \vphi} } = \frac{1}{p}\big \vert {\sum_{\ell \in \Lambda_j} \sprod{\vphi, X^\ell}\sprod{X^\ell,e_{i_0}}} \big \vert\leq \frac{\tau^*}{\hatsigma}
\end{align*}
for $j= 1, \dots, r$, we are done.

The random variable we need to control is a sum of independent variables. They are however not necessarily centered: Instead, we have
\begin{align*}
	\erw{ \sprod{\vphi, X^\ell}\sprod{X^\ell, e_{i_0}}} = \sprod{\vphi, e_{i_0}}.
\end{align*}
We will therefore devote our attention to the centered random variables $Y_\ell = \tfrac{1}{p}\left(\sprod{\vphi, X^\ell}\sprod{X^\ell, e_{i_0}} - \sprod{\vphi, e_{i_0}}\right)$ instead. They can all, again by incoherence, be bounded by $M^2 + \kappa$ almost surely.  As for $V$, we have
\begin{align*}
	\erw{\abs{Y_\ell}^2} &= \tfrac{1}{p^2}\left(\erw{ \abs{\sprod{\vphi, X^\ell}\sprod{X^\ell, e_{i_0}}}^2 - 2\re(\sprod{\vphi, X^\ell}\sprod{X^\ell, e_{i_0}}\sprod{\vphi, e_{i_0})} + \abs{\sprod{\vphi,e_{i_0}}}^2} \right)\\
	&=  \tfrac{1}{p^2}\erw{ \abs{\sprod{\vphi, X^\ell}\sprod{X^\ell, e_{i_0}}}^2} - \abs{\sprod{\vphi,e_{i_0}}}^2 \leq \tfrac{1}{p^2}\erw{ \abs{\sprod{\vphi, X^\ell}\sprod{X^\ell, e_{i_0}}}^2} \\
	&\leq \tfrac{1}{p^2} M^2 \erw{\abs{\sprod{X^\ell,e_{i_0}}}^2} = \tfrac{M^2}{p^2}.
\end{align*}
In the final step, we used the incoherence property as well as the isotropy assumption.

Now we can apply the Bernstein inequality to get
\begin{align*}
	\prb{ \abs{\sum_{i\in \Lambda_j} Y_i} \geq \frac{\tau^*- \kappa}{\hatsigma}} \leq 4 \exp\left(\frac{-p{\tau^*-\kappa}^2/(4\hatsigma^2)}{M^2 + (M^2+\kappa)(\tau^*-\kappa)/(\sqrt{2}\hatsigma)}\right).
\end{align*}
This is smaller than $\tfrac{\delta}{4nr}$ provided $p \geqsim \left(M^2\tfrac{\hatsigma^2}{{(\tau^*-\kappa)}^2} +(M^2 + \kappa)\tfrac{\hatsigma}{(\tau^*-\kappa)} \right) \log\left(\tfrac{nr}{\delta} \right)$. A union bound over $\phi \in \Phi$ and $j$ gives that \begin{align*}
{\sprod{\nu^r, \phi}} \leq {\frac{1}{p}\bigg\vert{\sum_{j=1}^r\sprod{A_j^*A_j e_{i_0}, \vphi} }}\bigg\vert + \abs{\sprod{\vphi, e_{i_0}}} \leq \tau^*- \kappa + \kappa = \tau^*.
\end{align*}
 with a failure probability larger than $\tfrac{\delta}{2}$.  \newline

{\bf Step 3:} Now we simply need to choose the parameters $r$, $\epsilon$ and $\tau$ wisely. First, let $\epsilon = \tfrac{s}{\abs{c_{i_0}^1}}$ with $s>0$ so that \begin{align*}
\frac{\gamma}{16}\leq\frac{s\kappa}{\abs{c_{i_0}^1}} &\leq \frac{\gamma}{12} \\
\Rightarrow\kappa\cdot \frac{4(1+s)}{\abs{c_{i_0}^1}} &\leq 1- \frac{2\gamma}{3}.
\end{align*} 
Next choose $\tau = 1-\kappa \hatsigma -\frac{\gamma}{3}$. \eqref{eq:HatSigma} then implies that
\begin{align*}
	\hatsigma = \frac{1}{1 + \kappa \frac{1-\abs{c_{i_0}^1}}{\abs{c_{i_0}^1}}} \left( \frac{1 +\tau (1-\abs{c_{i_0}^1}) +s}{\abs{c_{i_0}^1}}\right) \leq \frac{2 - \abs{c_{i_0}^1}+s}{\abs{c_{i_0}^1}}.
\end{align*}
We used that $\tau <1$. Consequently
\begin{align*}
	\tau^*- \kappa &= \tau - \kappa (\hatsigma +\epsilon+1)  =  1- \kappa( 2 \hatsigma + \epsilon +1) \\ &\geq  1- \frac{\gamma}{3} - \kappa \left(2\cdot\frac{2 - \abs{c_{i_0}^1}+s}{\abs{c_{i_0}^1}}  + \frac{s}{\abs{c_{i_0}^1}} +\frac{\abs{c_{i_0}^1}}{\abs{c_{i_0}^1}}\right) \geq 1- \frac{\gamma}{3} - \kappa\left(\frac{4(1+s)}{\abs{c_{i_0}^1}} \right) \geq \frac{\gamma}{3},
\end{align*}
and hence $$\frac{\hatsigma}{\tau^*-\kappa} \leq \frac{3}{\gamma}\left(\frac{4(1+s)}{\abs{c_{i_0}}}\right) \leq \frac{3}{\gamma}\left(1-\frac{2\gamma}{3}\right) \frac{1}{\kappa}= \frac{C_\gamma}{\kappa},$$
 so $p \geqsim \left(C_\gamma^2 \tfrac{M^2}{\kappa^2} + \tfrac{C_\gamma (M^2+\kappa)}{\kappa} \right) \log\left(\tfrac{nr}{\delta} \right)$ will imply what we need.

Finally, since 
\begin{align*}
	\frac{\hatsigma}{\epsilon} \leq \frac{\abs{c_{i_0}^1}}{s}\cdot \left(\frac{2+2s}{\abs{c_{i_0}^1}}-\frac{s}{\abs{c_{i_0}^1}}\right) \leq \frac{16\kappa}{\gamma} \left( \frac{1}{2}\left(1- \frac{2\gamma}{3}\right) \frac{1}{\kappa} - \frac{\gamma }{16 \kappa}\right) \leq \frac{8}{\gamma} -\frac{19}{3}
\end{align*}
choosing $r \geq \log\left(\frac{8}{\gamma}\right)$ will ensure $2^{-r}\hatsigma \leq \epsilon$, which together with property \eqref{eq:decay} proves \eqref{eq:INollProx}. The proof is finished.

\subsection{The Nuclear Norm as an Atomic norm} \label{sec:NucProof}
Here we rigorously prove that the nuclear norm is the atomic norm of the dictionary $(\vphi_{v \otimes u})_{ v\in \sph^{k-1}, u \in \sph^{n-1}}$. Let us  simplify the notation by writing $u \otimes v$ instead of $\vphi_{u \otimes v}$. Denote, as always, the synthesis operator of the dictionary by $D$.

Let $X \in \R^{k,n}$ be arbitrary and $\sum_{i=1}^k \sigma_i v_i \otimes u_i$ be its singular value decomposition. For $V \in \R^{k,n}$ arbitrary, we have
\begin{align*}
	\sprod{V,X} = \sum_{i=1}^k \sigma_i \sprod{V,v_i \otimes u_i} = \int_{\sph^{k-1} \times \sph^{n-1}} \sprod{V, v \otimes u} d \left(\sum_{i=1}^k \sigma_i \delta_{v_i \otimes u_i} \right)(v \otimes u).
\end{align*}
Hence, $X = D\left(\sum_{i=1}^k \sigma_i \delta_{v_i \otimes u_i} \right)$ and $\norm{X}_\calA \leq \sum \sigma_i = \norm{X}_*$.

To prove the opposite inequality, suppose that $X = D\mu$. Due to the definition of the singular value decomposition, we then have
\begin{align*}
	\sum_{i=1}^k \sigma_i = \sprod{\sum_{i=1}^k v_i \otimes u_i, X} = \int_{\sph^{k-1} \times \sph^{n-1}} \sum_{i=1}^k\sprod{v_i \otimes u_i, v \otimes u} d\mu(v,u) \leq \int_{\sph^{k-1} \times \sph^{n-1}} \Big \vert \sum_{i=1}^k\sprod{v_i,v} \sprod{u_i,u} \Big\vert d \abs{\mu}(v,u).
\end{align*}
Now since $(v_i)_{i=1}^k$ and $(u_i)_{i=1}^n$ are orthonormal systems in their respective spaces, Cauchy-Schwarz implies $$\Big \vert \sum_{i=1}^k\sprod{v_i,v} \sprod{u_i,u} \Big\vert \leq \left(\sum_{i=1}^k\sprod{v_i,v}^2\right)^{1/2} \left(\sum_{i=1}^k\sprod{u_i,u}\right)^{1/2} = \norm{u}_2\norm{v}_2 = 1,$$
and thus $\sum_{i=1}^k \sigma_i \leq \abs{\mu}(\sph^{k-1} \times \sph^{n-1}) = \norm{\mu}_{TV}$. The proof is finished.

\subsection{Numerical Estimation of the Statistical Dimension } \label{sec:numest}
	Let us begin by describing a very simple method with which one can numerically estimate the statistical dimension of a cone generated by convex constraints in general, and then discuss the specific details of our experiments.
	
	So say $C= \cone M = \bigcup_{\tau >0} \tau \cdot M$, where $M$ is convex. Let us first note that we then have \cite[Prop 3.1]{AmelunxLotzMcCoyTropp2014}
	\begin{align*}
		\delta(C) = d-\erw{C^{\circ}} = d - \erw{ \inf_{\tau>0} \dist(g, \tau M)^2}
	\end{align*}
	($C^{\circ}$ denotes the \emph{polar cone}.) Due to the fact that $g \to \dist(g, \tau M)$ is Lipschitz \cite[Section 5]{Phelps1958} and $g$ is Gaussian, $\dist(g, \tau M)$ will be highly concentrated around the expected value (this phenomenon is called \emph{measure concentration} -- see e.g. \cite{Ledoux2001}). Thus, by generating a few samples of $g$, calculating $\dist(g,\tau M)^2$ for each of them and finally taking the average, we will get a good approximation of $\delta(C)$.
	
	We still need to calculate $ \inf_{\tau>0}\dist(u, \tau M)^2$ for a fixed  $u$. For each fixed $\tau$, we may evaluate $J_u(\tau):=\dist(u, \tau M)^2$ by solving the convex optimization problem
	\begin{align}
		\min \norm{u - x}_2^2 \st x \in \tau M. \tag{$\calP_{\tau}$}
\end{align}	 
	The function $J_u$ is convex and thus has a unique minimum. If we furthermore can guarantee that $\norm{x}_2 \geq a$ for all $x \in M$, this minimum is attained on the compact interval $[0, \tfrac{\norm{u}_2}{a}]$ \cite[Lemma C.1]{AmelunxLotzMcCoyTropp2014}. We can hence find the minimum of $J_u$ by Golden Section Search (a version of ternary search specifically designed for the case that function evaluations are expensive) \cite{MR0055639}. \newline
	
	Now let us concretely look at the cones generated by \eqref{eq:SoftNuc1}--\eqref{eq:SoftNuc2} and \eqref{eq:ExactNuc}, respectively. Let us first note that due to the uniform distribution of $\ran A^*$, we may without loss of generality assume that the singular vectors $u_i$ and $v_i$ to be the canonical unit vectors. The version of $(\calP_\tau)$ corresponding to\eqref{eq:SoftNuc1}--\eqref{eq:SoftNuc2} then reads:
	\begin{align*}
		\min \norm{U -X}_F^2 \st && X_{11} &\leq  \tau\sigma \tag{$\calP_{\tau,\text{soft}}^{\sigma,t}$} \\
								& & \sum_{k=1}^r \sigma_i X_{ii} &\geq \tau \\
								&& \norm{X - X_{11} e_1 e_1^*}_{2\to 2} &\leq \tau(1-t).		
	\end{align*}
	The version corresponding to \eqref{eq:ExactNuc} is
	\begin{align*}
		\min \norm{U -X}_F^2 \st && X & =\begin{bmatrix}
		X^1 & 0 \\ 0 & X^2
\end{bmatrix} \tag{$\calP_{\tau,\text{exact}}$} \\
								& & X^1 &= \tau \id \in \R^{r,r} \\
								&&  \norm{X^2}_{2\to 2} &\leq \tau, X^2 \in \R^{k-r,n-r}.	
	\end{align*}
	In our numerical experiments, we for each value $\sigma =1\tfrac{1}{10}, 1\tfrac{2}{10}, \dots, 2$ and $t=\frac{1}{20}, \dots \tfrac{19}{20}$, and for $(\calP_{\tau, \text{exact}})$, generated $25$ samples of the Gaussian distribution using {\tt randn}. We used the MATLAB-package {\tt cvx} \cite{cvx} for solving $(\calP^{\sigma,t}_{\tau,\text{soft}})$ and $(\calP_{\tau, \text{exact}})$. Note that the constraints $(\calP^{\sigma,t}_{\tau,\text{soft}})$ can be infeasible for some values of $\sigma$ and $\tau$. If such a case was detected by \text{cvx}, we deduced that $C = \set{0}$, and $\delta(C)=0$.

\subsection{Construction of Parseval Frames for $\calE$ if $\hatphi$ is Non-Vanishing.} \label{sec:ParsevalFrame}
In this section, we will describe how one can transform frames for $L^2(\R)$ into frames of $\calE$. Let us begin by considering the map
\begin{align*}
	J : \calE \to L^2, v \mapsto v *\phi.
\end{align*}
$J$ is by definition of $\calE$ isometric, and therefore continuous and injective. If $\hatphi$ is non-vanishing, it is furthermore bijective. To prove see this, notice that for $w \in L^2$ arbitrary, $v=\calF(\hatphi^{-1} \widehat{w}) \in \calE$, and
\begin{align}
	\widehat{J(v)} = \hatphi^{-1} \widehat{w} \hatphi = \widehat{w},
\end{align}
which proves that $J(v) = \widehat{w}$. The bounded inverse theorem proves that $J$ is an isometry between Hilbert spaces, which in particular proves that $J^* J = \id$. \newline

Now let $(\alpha_i)_{i \in \N}$ be a Parseval Frame for $L^2(\R)$. Then $f_i = J^*\alpha_i$ is one of $\calE$, since for $v \in \calE$ arbitrary
\begin{align*}
	\sum_{i \in \N} \sprod{v, f_i} f_i = \sum_{i \in \N} \sprod{v, J^*\alpha_i} J^*\alpha_i = J^* \left( \sum_{i \in \N} \sprod{Jv, \alpha_i} \alpha_i\right) = J^*Jv = v.
\end{align*}
 We used the Parseval frame property of $(\alpha_i)_{i \in \N}$. \\
 
 Now let us have a look how the approximation rates of $(f_i)$ and $\alpha_i$ are related. We have
 \begin{align*}
 	\bigg \Vert { \sum_{i \in M} \sprod{\delta_x, f_i} f_i - \delta_x} \bigg\Vert_\calE = \bigg \Vert { J^*\left(\sum_{i \in M} \sprod{\delta_x, J^*\alpha_i} \alpha_i - J\delta_x\right)} \bigg\Vert_\calE = \bigg \Vert { \sum_{i \in M} \sprod{\delta_x, \alpha_i} \alpha_i - J\delta_x} \bigg\Vert_2,
 \end{align*}
 meaning that \emph{the approximation rate of $(f_i)$ for $\delta_x$  in $\calE$ is the same as the approximation rates of $(\alpha_i)$ for $J\delta_x = \phi(\cdot -x)$}. If $\phi$ is smooth, we can choose $(\alpha_i)$ to be a Parseval frame consisting of \emph{wavelets} to get a good rate, as the following results shows.
 
\begin{theo} (DeVore \cite[p.117]{devore1998nonlinear}, simplified version.) Let $s>0$, $\frac{1}{p}= \tfrac{1}{2}+s$ and suppose that $\psi$ is a function obeying:
\begin{itemize}
\item $\psi$ has $r$ vanishing moments, with $r>s$.
\item $\psi \in B_{q,p}^\rho(\R)$ for some $q$ and $\rho>s$.
\end{itemize}
Then there for every $n \in \N$ exists a set $K$ with $\abs{K} \leq N$ such that there exists coefficients $(c_{i})_i \in K$ with $\abs{K} \leq n$n
\begin{align*}
	\norm{\sum_{i \in K } c_{i} \psi_i  -f}_2 \leq C \norm{f}_{B_{p,p}^s} n^{-s},
\end{align*} 
where $(\psi_{i})$ is the  system of wavelets generated by $\psi$. $C$ is only dependent on $s$.
	
\end{theo}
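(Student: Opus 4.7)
The plan is to pass through the wavelet characterization of the Besov space $B_{p,p}^s(\R)$, reducing the $n$-term $L^2$-approximation of $f$ to a purely sequential $\ell^p \to \ell^2$ approximation problem, where Stechkin-type bounds give the rate $n^{-s}$ essentially for free.

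First, I would invoke the classical wavelet characterization of Besov spaces. Under the hypotheses on $\psi$ (enough vanishing moments, $r>s$, and enough smoothness, $\psi\in B_{q,p}^{\rho}$ with $\rho>s$), the dyadic wavelet system $(\psi_i)_{i\in\N} = (2^{j/2}\psi(2^j\cdot -k))_{j,k}$ is an unconditional basis for both $L^2(\R)$ and $B_{p,p}^s(\R)$. With an appropriate (and standard) renormalization $\widetilde{\psi}_i$ absorbing the factor $2^{j(s+1/2-1/p)}=2^{0}=1$ in our regime $\tfrac{1}{p}=\tfrac{1}{2}+s$, one obtains the norm equivalences
\begin{align*}
	\norm{f}_2^2 \;\asymp\; \sum_{i\in\N} \abs{c_i}^2, \qquad \norm{f}_{B_{p,p}^s} \;\asymp\; \Big(\sum_{i\in\N} \abs{c_i}^p\Big)^{1/p},
\end{align*}
where $c_i$ are the coefficients of $f$ with respect to this renormalized wavelet system. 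The first is simple Parseval; the second is the content of the wavelet characterization, and is precisely where the assumptions $r>s$ and $\rho>s$ enter.

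Next, I would reduce the problem to a sequence-space statement. Given $f=\sum_i c_i\widetilde{\psi}_i$, for any index set $K$ with $\abs{K}\leq n$ and any choice of coefficients supported on $K$,
\begin{align*}
	\bigg\Vert\sum_{i\in K} d_i\widetilde{\psi}_i - f\bigg\Vert_2^2 \;\asymp\; \sum_{i\notin K}\abs{c_i}^2 + \sum_{i\in K}\abs{d_i-c_i}^2.
\end{align*}
The optimal choice is $d_i=c_i$ for $i\in K$, so the question becomes: given $(c_i)\in\ell^p$, choose $K$ with $\abs{K}\leq n$ minimizing $\sum_{i\notin K}\abs{c_i}^2$. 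I would take $K$ to index the $n$ largest $\abs{c_i}$.

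The key inequality is then Stechkin's lemma: if $\tfrac{1}{p}=\tfrac{1}{2}+s$ with $s>0$, and $(c_i^*)$ denotes the non-increasing rearrangement of $(\abs{c_i})$, then $c_n^* \leq n^{-1/p}\norm{c}_{\ell^p}$, and a quick calculation using the decay $c_k^* \leq k^{-1/p}\norm{c}_{\ell^p}$ yields
\begin{align*}
	\sum_{k>n}(c_k^*)^2 \;\leq\; \norm{c}_{\ell^p}^2 \sum_{k>n} k^{-2/p} \;\leq\; C_s\, n^{1-2/p}\, \norm{c}_{\ell^p}^2 \;=\; C_s\, n^{-2s}\, \norm{c}_{\ell^p}^2.
\end{align*}
Combining with the two norm equivalences from the first step produces $\norm{\sum_{i\in K}c_i\widetilde{\psi}_i-f}_2 \leq C\, n^{-s}\,\norm{f}_{B_{p,p}^s}$, which after reverting the renormalization gives the stated bound with coefficients $c_i$ with respect to the original wavelets $\psi_i$.

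The main obstacle is bookkeeping the wavelet normalizations: one must check that the exponents in the Besov-wavelet characterization really do conspire, under $\tfrac{1}{p}=\tfrac{1}{2}+s$, to make the $\ell^p$-norm of the (appropriately renormalized) coefficients comparable to $\norm{f}_{B_{p,p}^s}$ while simultaneously the $\ell^2$-norm controls $\norm{f}_2$. Once that is set up correctly, the rest is the standard Stechkin/greedy estimate, and the constant $C$ is visibly a function of $s$ alone (through the equivalence constants and the geometric series $\sum_{k>n}k^{-2/p}$).
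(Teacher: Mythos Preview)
Your outline is correct and is precisely the standard route to this result: wavelet characterization of $B_{p,p}^s$ under the hypotheses on $\psi$, the observation that with $\tfrac{1}{p}=\tfrac{1}{2}+s$ the Besov and $L^2$ normalizations of the wavelet coefficients coincide, and then the Stechkin/greedy estimate $\big(\sum_{k>n}(c_k^*)^2\big)^{1/2}\lesssim n^{-(1/p-1/2)}\norm{c}_{\ell^p}=n^{-s}\norm{c}_{\ell^p}$.

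There is nothing to compare against in the paper itself: the theorem is stated there as a citation (a ``simplified version'' of a result from DeVore's survey on nonlinear approximation) and is invoked without proof. Your sketch reproduces the argument DeVore gives in that reference, so in that sense you are aligned with the intended source. The only point worth tightening is the remark that the constant $C$ depends ``on $s$ alone'': the equivalence constants in the wavelet characterization of $B_{p,p}^s$ also depend on the chosen wavelet $\psi$ (through $r$ and $\rho$), so strictly speaking $C=C(s,\psi)$. This is harmless for how the paper uses the result, but it is a slight overstatement as written.
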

 
 We conclude that there for every smooth filter $\phi$ exists a Parseval frame $(f_i)$ for $\calE$ with
 \begin{align} \label{eq:AppRate}
 	\sup_{x \in \R} \abs{M(x, \epsilon)} \leqsim \norm{\phi}_{B_{p,p}^s} \epsilon^{-1/s},
 \end{align}
 where the implicit constant only depends on $s$.
 
Although we know that there for each $x \in \R$ exists a (minimal) set $M(x, \epsilon_0)$ with $\epsilon(x,M(x, \epsilon_0))$ smaller than some fixed $\epsilon$, say, there is no reason why there should exist an $M$ such that $\epsilon(M,x_0)< \epsilon$ for all $x_0 \in \R$. If we however a priori know that $x_0$ lies in some compact interval $\calI$, we can do a bit better: then, there exists a set $M$ such that $\sup_{x \in \calI} \epsilon(x,M)\leq \epsilon$.

\begin{lem} \label{lem:MLemma}  Let $\calI$ be an interval of finite length, and  $\epsilon >0$. Then there exists an $M_\calI$ with $\sup_{x \in \calI} \epsilon(x,M)\leq \epsilon$. $M_\calI$ obeys
\begin{align*}
	\abs{M_\calI} \leq \left\lceil\frac{2\abs{\calI}  \norm{\phi}_{1,2}}{\epsilon} \right\rceil \cdot \sup_{x \in \calI} \abss{M\left(x, \frac{\epsilon}{2\norm{\phi}_{1,2}}\right)}< \infty.
\end{align*}
\end{lem}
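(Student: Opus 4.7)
The plan is to combine a Lipschitz estimate for $x \mapsto \delta_x$ in $\calE$-norm with a union of local frame-truncation sets at grid points covering $\calI$.

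First, I would show that the map $x \mapsto \delta_x$ is Lipschitz from $\R$ into $\calE$ with constant $\norm{\phi}_{1,2}$. By Plancherel,
\[
\norm{\delta_x - \delta_{x'}}_\calE^2 = \int_\R |\hatphi(t)|^2 \abss{e^{-itx} - e^{-itx'}}^2 dt \leq |x-x'|^2 \int_\R t^2 |\hatphi(t)|^2 dt = |x-x'|^2 \norm{\phi}_{1,2}^2.
\]
Combined with the observation that $\mathrm{Id} - P_M = T \chi_{M^c} T^*$ has operator norm at most $1$ on $\calE$ (since the Parseval frame property $TT^* = \mathrm{Id}$ forces $\norm{T} = 1$), this yields that for any fixed $M \sse \N$ the map $x \mapsto \epsilon(M, x) = \norm{(\mathrm{Id} - P_M)\delta_x}_\calE$ is Lipschitz in $x$ with constant at most $\norm{\phi}_{1,2}$.

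Next, I would choose $N = \lceil 2\abs{\calI} \norm{\phi}_{1,2}/\epsilon \rceil$ grid points $x_1, \ldots, x_N \in \calI$ with consecutive spacing at most $\epsilon/(2\norm{\phi}_{1,2})$, so that every $x \in \calI$ lies within $\epsilon/(2\norm{\phi}_{1,2})$ of some $x_i$. Define $M_\calI := \bigcup_{i=1}^N M(x_i, \epsilon/2)$; its cardinality is at most $N \cdot \sup_{x \in \calI} \abs{M(x, \epsilon/2)}$, matching the announced estimate. To verify the uniform error bound, fix $x \in \calI$, pick $x_i$ with $|x - x_i| \leq \epsilon/(2\norm{\phi}_{1,2})$, and use the Lipschitz estimate:
\[
\epsilon(M_\calI, x) \leq \epsilon(M_\calI, x_i) + \norm{\phi}_{1,2} \cdot \tfrac{\epsilon}{2\norm{\phi}_{1,2}} = \epsilon(M_\calI, x_i) + \tfrac{\epsilon}{2},
\]
which is $\leq \epsilon$ provided $\epsilon(M_\calI, x_i) \leq \epsilon/2$.

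The main obstacle I anticipate is precisely this last step: enlarging $M$ need not monotonically decrease $\epsilon(M, \cdot)$ for a generic Parseval frame, because the partial frame expansion $P_M \delta_x$ is not an orthogonal projection onto $\spn\set{f_i : i \in M}$ and adding frame elements can in principle increase the $\calE$-error. For the wavelet-type Parseval frame constructed in the Appendix I would circumvent this by insisting that each $M(x_i, \epsilon/2)$ be an initial segment in a natural lexicographic ordering (scale first, translation second), so that $M_\calI$ is again such an initial segment. The monotone tail bound $\epsilon(M, x)^2 \leq \sum_{i \notin M} \abs{\sprod{\delta_x, f_i}}^2$ — valid for any Parseval frame by $\norm{T} \leq 1$ — can then be used in place of $\epsilon(M, x)$ itself, at the (controlled) cost of slightly enlarging each $M(x_i, \epsilon/2)$.
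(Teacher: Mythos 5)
Your proposal follows essentially the same route as the paper's own proof: establish the Lipschitz bound $\norm{\delta_x - \delta_{x'}}_\calE \leq \norm{\phi}_{1,2}\abs{x - x'}$ via Plancherel, combine it with $\norm{\id - P_M}\leq 1$ (since $\id - P_M = \sum_{i\notin M} f_i\otimes f_i$ lies between $0$ and $\id$), cover $\calI$ by $\lceil 2\abs{\calI}\norm{\phi}_{1,2}/\epsilon\rceil$ balls of radius $\epsilon/(2\norm{\phi}_{1,2})$, and let $M_\calI$ be the union of local approximation sets at the grid points. The minor numerical difference (you use $M(x_i,\epsilon/2)$ where the statement has $M(x_i,\epsilon/(2\norm{\phi}_{1,2}))$) is immaterial given $\norm{\phi}_{1,2}\geq\norm{\phi}_2=1$.

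Where you go beyond the paper is the observation in your final paragraph, and it is a sharp one. The paper's proof bounds the grid-point term by writing, in effect, $\epsilon(M_\calI,x_i)\leq\epsilon(M(x_i,\cdot),x_i)$, which tacitly assumes that $\epsilon(\cdot,x_i)$ is monotone non-increasing under set inclusion. For a general Parseval frame this can fail: $M\subseteq M'$ gives $0\leq \id-P_{M'}\leq\id-P_M\leq\id$, but squaring is not operator monotone, so $(\id-P_{M'})^2\leq(\id-P_M)^2$ need not hold and $\norm{(\id-P_{M'})\delta_x}$ can in principle exceed $\norm{(\id-P_M)\delta_x}$. The paper does not address this; it simply takes the inequality for granted. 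Your repair — replace $\epsilon(M,x)$ by the monotone tail quantity $\bigl(\sum_{i\notin M}\abs{\sprod{\delta_x,f_i}}^2\bigr)^{1/2}$, which dominates $\epsilon(M,x)$ because the synthesis operator of a Parseval frame has norm $1$ — closes the gap cleanly, at the cost of redefining $M(x,\cdot)$ through the tail quantity (which is harmless for the cardinality estimates via DeVore's theorem). Note that when the wavelet system is an orthonormal basis, $P_M$ is an orthogonal projection and the monotonicity is automatic, so the gap is immaterial in the paper's concrete application, but it is a genuine issue at the stated level of generality. One small omission: you do not argue that $\sup_{x\in\calI}\abs{M(x,\epsilon)}<\infty$; the paper handles this via compactness of $\calI$ together with the subcontinuity of $x\mapsto\abs{M(x,\epsilon)}$.
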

\begin{proof}
	Let us first note that for $M$, $x$ and $y$ arbitrary, we have
	\begin{align*}
		\normm{ \left(\sum_{i \in M} \sprod{\delta_{x}, f_i} f_i - \delta_{x}\right)-\left(\sum_{i \in M} \sprod{\delta_{y}, f_i} f_i - \delta_{y} \right) }^2_\calE &\leq \norm{ \id -\sum_{i \in M} f_i \otimes f_i}^2 \norm{\delta_x - \delta_y}_\calE^2 \\
		&\leq \int_{\R} \abs{\exp(ixt) - \exp(iyt)}^2 \abs{\hatphi(t)}^2 dt \\ &\leq \int_{\R} \abs{ixt - iyt}^2 \abs{\hatphi(t)}^2 dt= \abs{x-y}^2 \norm{\hatphi}_{1,2}^2.
	\end{align*}
		We used that $\norm{A - B} \leq \norm{A}$ for $A$, $B$ positive semi-definite. Now let $C=(x_i)_i$ be a collection of points $x_i$ so that $$\calI \sse \bigcup_i U_{r}(x_i),$$ with $r= \frac{\epsilon}{2\norm{\phi}_{1,2}}$ It is clear that we can choose this collection in such a way so that $\abs{C} \leq \left\lceil\frac{2\abs{\calI}  \norm{\phi}_{1,2}}{\epsilon} \right\rceil$. Now define $M$ as
	\begin{align*}
		M = \bigcup_{x \in C}M\left(x, \tfrac{\epsilon}{\norm{\phi}_{1,2}} \right).
	\end{align*}
	For $y \in \calI$ arbitrary, let $x_i$ be the point in $C$ closest to $y$. Then
	\begin{align*}
		\Big\Vert \sum_{j \in M} \sprod{\delta_{y}, f_j} f_j - \delta_{y} \Big \Vert_\calE \leq \Big\Vert \sum_{j \in M} \sprod{\delta_{x_i}, f_j} f_j - \delta_{x_i} \Big \Vert_\calE  + \norm{\phi}_{1,2} \abs{x_i-y} \leq \epsilon_0  \left( \frac{1}{2} + \frac{1}{2\norm{\phi}_{1,2}}\right) \leq \epsilon_0
	\end{align*}
		We used that $\norm{\phi}_{1,2} \geq \norm{\phi}_2 =1$. We now only need to prove that $\sup_{x \in \calI} \abs{M(x, \epsilon)}< \infty$ for each $\epsilon$. This is however easy: $\calI$ is compact and the function $x \to  \abs{M(x, \epsilon)}$ is subcontinuous - if $M_x$ is a set for which $\epsilon(x, M_x) < \epsilon$, $\epsilon(y, M_x) < \epsilon$ also for $y$ in a neighborhood of $x$. This prove that $\abs{M(y, \epsilon)} \leq \abs{M_x}$
in that neighborhood.\end{proof}

Combining Equation \eqref{eq:AppRate} with Lemma \ref{lem:MLemma}, we finally obtain:

\begin{cor} \label{cor:FrameFullRate}
	Suppose that $\phi \in B_{p,p}^s(R) \cap W^{1,2}(\R)$, with $p= 2(1+2s)^{-1}$, and $\epsilon >0$. Then there exists a Parseval Frame $(f_i)$ for $\calE$ with the following property: For each compact interval $\calI$, there exists an $M_\calI$ with $\sup_{x \in I} \epsilon(x, M) \leq \epsilon$ obeying
	\begin{align*}
		\abs{M_\calI} \leqsim  \norm{\phi}_{B_{p,p}^s}\left\lceil\frac{2\abs{\calI}  \norm{\phi}_{1,2}}{\epsilon} \right\rceil \cdot \left\lceil \frac{\epsilon}{2\norm{\phi}_{1,2}}\right\rceil^{-1/s}
	\end{align*}
\end{cor}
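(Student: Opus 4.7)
The strategy is to combine three ingredients established in the appendix: the isometric identification $J: \calE \to L^2(\R)$ together with its partner $J^*$, DeVore's best-$n$-term wavelet approximation theorem quoted just above, and Lemma \ref{lem:MLemma}, which upgrades a pointwise bound on $\epsilon(x,M)$ to a uniform bound over a compact interval $\calI$ at the cost of a multiplicative factor $\lceil 2\abs{\calI}\norm{\phi}_{1,2}/\epsilon\rceil$.

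First, I would choose a wavelet $\psi$ satisfying the hypotheses of DeVore's theorem for the given exponent $s$ (with $p=2(1+2s)^{-1}$), and let $(\alpha_i)_{i \in \N}$ be the associated Parseval wavelet frame for $L^2(\R)$. Setting $f_i := J^*\alpha_i$, the calculation at the beginning of Section \ref{sec:ParsevalFrame} shows that $(f_i)_{i\in\N}$ is a Parseval frame for $\calE$; this relies on $J^*J=\id_\calE$, which in turn uses the non-vanishing assumption on $\hatphi$ and the bounded inverse theorem.

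Second, I would identify the pointwise approximation rate, thereby proving \eqref{eq:AppRate}. Using the identity
\begin{align*}
\normm{\sum_{i \in M} \sprod{\delta_{x_0}, f_i} f_i - \delta_{x_0}}_\calE = \normm{\sum_{i \in M} \sprod{J\delta_{x_0}, \alpha_i} \alpha_i - J\delta_{x_0}}_2
\end{align*}
from the preceding subsection, together with $J\delta_{x_0} = \phi(\cdot - x_0)$, the problem reduces to best $n$-term wavelet approximation of the shifted filter $\phi(\cdot-x_0)$ in $L^2$. Since the Besov norm is translation invariant, $\norm{\phi(\cdot-x_0)}_{B^s_{p,p}} = \norm{\phi}_{B^s_{p,p}}$, and DeVore's theorem then supplies a set $M(x_0,\widetilde{\epsilon}) \sse \N$ with $\epsilon(M(x_0,\widetilde{\epsilon}), x_0) \leq \widetilde{\epsilon}$ and cardinality $\leqsim \norm{\phi}_{B^s_{p,p}}^{1/s}\widetilde{\epsilon}^{-1/s}$, with an implicit constant that is independent of $x_0$.

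Finally, I would invoke Lemma \ref{lem:MLemma} with the threshold $\widetilde{\epsilon} = \epsilon/(2\norm{\phi}_{1,2})$: the lemma furnishes an $M_\calI$ with $\sup_{x \in \calI} \epsilon(x,M_\calI) \leq \epsilon$ and cardinality at most $\lceil 2\abs{\calI}\norm{\phi}_{1,2}/\epsilon\rceil \cdot \sup_{x \in \calI} \abs{M(x,\widetilde{\epsilon})}$. Plugging in the pointwise bound from the previous paragraph yields the claimed estimate on $\abs{M_\calI}$. The only nontrivial point is checking translation uniformity in the second step; this is however immediate from the dyadic structure of a wavelet system (which is itself translation-covariant up to relabeling of scales and shifts) combined with translation invariance of the Besov norm, so no real obstacle arises.
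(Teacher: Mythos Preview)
Your proposal is correct and follows exactly the paper's route: the paper's proof is the single sentence ``Combining Equation \eqref{eq:AppRate} with Lemma \ref{lem:MLemma}, we finally obtain'', and you have simply unpacked both ingredients (the $J^*$-transfer of a wavelet Parseval system plus DeVore's theorem to get \eqref{eq:AppRate}, then Lemma \ref{lem:MLemma} for the uniform-in-$\calI$ upgrade). The only minor deviation is that your intermediate cardinality bound carries the exponent $\norm{\phi}_{B^s_{p,p}}^{1/s}$ rather than $\norm{\phi}_{B^s_{p,p}}$, which is in fact what DeVore's inequality yields; this is a cosmetic discrepancy already present in the paper's own \eqref{eq:AppRate}, not a flaw in your argument.
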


 \subsection{Proof of Theorem \ref{th:superResolution}} \label{sec:SuperResProof}

Let $\epsilon_0 >0$ be so small so that 
 	$$(1-\epsilon_0)( \abs{c_{x_0}^0} ( 1+L) - L )\geq \lambda$$ and invoke Lemma \ref{lem:MLemma} to secure the existence of an $M$ with $\sup_{x \in \cal I} \epsilon(M, x) \leq \epsilon_0$. If $(1-\gamma) \abs{c_{x_0}^0} ( 1+L) - L )\geq \lambda$, we can choose $\gamma = \epsilon_0$, and even get a set with the desired cardinality bound. Let us now define
 	\begin{align*}
 		g = C\sum_{i \in M} \sprod{\omega_{x_0} \delta_{x_0}, f_i}_\calE \calF^{-1} (\abs{\hatphi}^2 \widehat{f}_i),
 	\end{align*}
 	where $C=(\abs{c_{x_0}^0}(1 - \epsilon_0 + (1-\epsilon_0) L) - (1+\epsilon_0)L)^{-1}$ and $\omega_{x_0} = \abs{c^0_{x_0}}^{-1} \overline{c}^0_{x_0}$. $g$ is then in $\ran \calF^{-1} \abs{\hatphi}^2 \calF T_M^*$. It furthermore obeys for every $y \in \R$
 	\begin{align*}
 		\abs{g(y) - C\omega_{x_0} a(y-x_0)} &=  C\abss{\calF^{-1}\left(\abs{\hatphi}^2 \calF\left(\sum_{i \in M} \sprod{ \delta_{x_0}, f_i}f_i -\delta_{x_0}\right)\right)(y) } \\
 		&\leq C \Big\Vert {\sum_{i \in M} \sprod{ \delta_{x_0}, f_i}f_i -\delta_{x_0}} \Big\Vert_\calE 
 		\leq C \cdot \sup_{x \in \calI} \epsilon(M, x) \leq C\epsilon_0 .
 	\end{align*}
 	We used \ref{lem:FPhiF}. Consequently,
 	\begin{align*}
 		\abs{g(x_0)} &\leq  C \abss{ \omega_{x_0} a(x_0-x_0)} + Cs\epsilon_0 =C(1+ s\epsilon_0) \\
 		\abs{g(y) - g(x_0) a(y-x_0)} &\leq \abss{g(y) - C\omega_{x_0} a(y-x_0)} + \abss{C\omega_{x_0} -g(x_0)} \abs{a(y-x_0} \leq 2 C \epsilon_0.
 	\end{align*}
 	It remains to bound $\sum_{x \in I_0} \re(c_x^0 g(x))$. We have
\begin{align*}
	 \sum_{x \in I_0} \re(c_x^0 g(x))  &\geq C\sum_{x \in I_0}   \re \left(c_x^0 \omega_{x_0}a(x-x_0)\right)- \epsilon_0 \abs{c_x^0 a(x-x_0)}\\
	 &= C\abs{c_{x_0}^0}(1-\epsilon_0)  - (1+\epsilon_0)C \sum_{x \in I_0 \backslash \set{x_0}} \abs{ c_x^0 a(x-x')} \\
	 &\geq C \left(\abs{c_{x_0}^0} (1- \epsilon_0) - (1+\epsilon_0) L (1- \abs{c_{x_0}^0})\right) \\
	 &= C \left( \abs{c_{x_0}^0}(1 - \epsilon_0 + (1-\epsilon_0) L) - (1+\epsilon_0)L \right)=1.
	 \end{align*}All in all, we have proven that $g$ obeys \eqref{eq:gCond} for $\sigma = C(1+s\epsilon_0)$ and $t = 1- 2C\epsilon_0$. Corollary \ref{cor:g} implies that there exists an $x^*$ in $\supp \mu_*$ for which
	 \begin{align*}
	 	\abs{ a(x_* -x_0)} \geq \frac{1-2C\epsilon_0}{C(1+s\epsilon_0)} \geq \frac{1}{C}\left(1- (2C+1)\epsilon_0\right) \geq \lambda
	 \end{align*}
which by property \ref{eq:Hump} of $a$ implies the statement of the theorem.
 
\end{document}